\numberwithin{equation}{section}
\newtheorem{theorem}{Theorem}[section]
\newtheorem{lemma}[theorem]{Lemma}
\newtheorem{corollary}[theorem]{Corollary}
\theoremstyle{definition}
\newtheorem{remark}[theorem]{Remark}
\newtheorem{example}{Example}[section]
\newcommand{\bibb}[1]{\left\{#1\right\}}
\newcommand{\smbb}[1]{\left(#1\right)}
\newcommand{\tf}{\tfrac}
\newcommand{\al}{\alpha}
\newcommand{\be}{\beta}
\newcommand{\ga}{\gamma}
\newcommand{\de}{\delta}
\newcommand{\la}{\lambda}
\newcommand{\si}{\sigma}
\newcommand{\vep}{\varepsilon}
\newcommand{\ze}{\zeta}
\newcommand{\vOm}{\varOmega}
\newcommand{\vPsi}{\varPsi}
\newcommand{\ue}{\mathrm{e}}
\newcommand{\Lii}{\,\mathrm{Li}}
\newcommand{\ds}{\displaystyle}
\newcommand{\ol}{\overline}
\def\t{\tilde{t}}
\def\S{\tilde{S}}
\newcommand{\tmod}[1]{{\@displayfalse\pmod{#1}}}
\newdimen\bibspace
\renewenvironment{thebibliography}[1]{%
 \section*{\refname 
       \@mkboth{\MakeUppercase\refname}{\MakeUppercase\refname}}%
     \list{\@biblabel{\@arabic\c@enumiv}}%
          {\settowidth\labelwidth{\@biblabel{#1}}%
           \leftmargin\labelwidth
           \advance\leftmargin\labelsep
           \itemsep\bibspace
           \parsep\z@skip     %
           \@openbib@code
           \usecounter{enumiv}%
           \let\p@enumiv\@empty
           \renewcommand\theenumiv{\@arabic\c@enumiv}}%
     \sloppy\clubpenalty4000\widowpenalty4000%
     \sfcode`\.\@m}
    {\def\@noitemerr
      {\@latex@warning{Empty `thebibliography' environment}}%
     \endlist}
\begin{document}

\title{\bf\boldmath{On variants of the Euler sums and symmetric extensions of the Kaneko-Tsumura conjecture}}
\author
{
Weiping Wang$\,^{a,}$\thanks{\emph{E-mail addresses\,:}
wpingwang@yahoo.com, wpingwang@zstu.edu.cn (Weiping Wang).}
\quad
Ce Xu$\,^{b,}$\thanks{Corresponding author. \emph{E-mail address\,:}
cexu2020@ahnu.edu.cn (Ce Xu).}
\\[1pt]
\small $a.$ School of Science, Zhejiang Sci-Tech University,
    Hangzhou 310018, P.R. China\\
\small $b.$ School of Mathematics and Statistics, Anhui Normal University,
    Wuhu 241002, P.R. China
}
\date{}
\maketitle

\vspace{-0.5cm}
\begin{center}
\parbox{6.3in}{\small{\bf Abstract}\vspace{3pt}

\hspace{3.5ex}
By using various expansions of the parametric digamma function and the method of residue computations, we study three variants of the linear Euler sums, related Hoffman's double $t$-values and Kaneko-Tsumura's double $T$-values, and establish several symmetric extensions of the Kaneko-Tsumura conjecture. Some special cases are discussed in detail to determine the coefficients of involved mathematical constants in the evaluations. In particular, it can be found that several general convolution identities on the classical Bernoulli numbers and Genocchi numbers are required in this study, and they are verified by the derivative polynomials of hyperbolic tangent.
}

\vspace{6pt}
\parbox{6.3in}{\small{\emph{AMS classification\,:}}\,\,
11M32; 11A07; 40A25; 05A19; 11B68}

\vspace{4.5pt}
\parbox{6.3in}{\small{\emph{Keywords\,:}}\,\,
Multiple zeta values; Multiple $t$-values; Multiple $T$-values; Harmonic numbers; Euler sums; Convolution identities; Bernoulli numbers; Genocchi numbers}
\end{center}


\setcounter{tocdepth}{2}
\tableofcontents


\section{Introduction}\label{Sec.intro}

The \emph{alternating multiple zeta values} (alternating MZVs) are defined by
\begin{equation}\label{AMZV}
\ze(s_1,s_2,\ldots,s_k;\si_1,\si_2,\ldots,\si_k)
    =\sum_{n_1>n_2>\cdots>n_k\geq 1}\frac{\si_1^{n_1}\si_2^{n_2}\cdots\si_k^{n_k}}
        {n_1^{s_1}n_2^{s_2}\cdots n_k^{s_k}}\,,
\end{equation}
where $s_j$ are positive integers, $\si_j=\pm1$, for $j=1,2,\ldots,k$, with $(s_1,\si_1)\neq(1,1)$. As usual, we can combine the strings of exponents and signs into a single string, with $s_j$ in the $j$th position when $\si_j=+1$, and $\bar{s}_j$ in the $j$th position when $\si_j=-1$. In particular, when $s_1>1$, setting $\si_j=1$, $j=1,2,\ldots,k$, in Eq. (\ref{AMZV}), we obtain \emph{multiple zeta values} (MZVs), and denote them by $\ze(s_1,s_2,\ldots,s_k)$.

The study of multiple zeta values began in the early 1990s with the works of Hoffman \cite{Hoff92} and Zagier \cite{Zag92}, and has attracted a lot of research in the last two decades. For detailed history and applications, the readers may consult in, e.g., the book of Zhao \cite{Zhao16.MZF}.

In a recent paper \cite{Hoff19.AOV}, Hoffman introduced and studied an ``odd'' variant of the MZVs:
\begin{align*}
t(s_1,s_2,\ldots,s_k)
&=\sum_{\substack{n_1>n_2>\cdots>n_k\geq 1\\n_i\ \mathrm{odd}}}
    \frac{1}{n_1^{s_1}n_2^{s_2}\cdots n_k^{s_k}}\\
&=\sum_{n_1>n_2>\cdots>n_k\geq 1}
    \frac{1}{(2n_1-1)^{s_1}(2n_2-1)^{s_2}\cdots(2n_k-1)^{s_k}}\,,
\end{align*}
which are called the \emph{multiple $t$-values} (MtVs). For convenience, let us call the normalized version
\begin{align*}
\t(s_1,s_2,\ldots,s_k)
&=\sum_{n_1>n_2>\cdots>n_k\geq 1}
    \frac{1}{(n_1-1/2)^{s_1}(n_2-1/2)^{s_2}\cdots(n_k-1/2)^{s_k}}\\
&=2^{s_1+s_2+\cdots+s_k}t(s_1,s_2,\ldots,s_k)
\end{align*}
the \emph{multiple $\t$-values}. According to the definitions, $\t(s)=2^st(s)=(2^s-1)\ze(s)$ for integer $s\geq 2$, where $\ze(s)$ is the \emph{Riemann zeta function}. As showed in \cite[Corollaries 4.1 and 4.2]{Hoff19.AOV}, the MtVs are reducible to linear combinations of alternating MZVs. Due to this fact as well as the congruence condition in the summation, the MtVs can be regarded as MZVs of level two. In 2020, Kaneko and Tsumura \cite{KaTs20} introduced another variant of MZVs of level two:
\begin{align*}
T(s_1,s_2,\ldots,s_k)
&=2^k\sum_{\substack{n_1>n_2>\cdots>n_k\geq1\\n_i\equiv k-i+1\tmod{2}}}
    \frac{1}{n_1^{s_1}n_2^{s_2}\cdots n_k^{s_k}}\nonumber\\
&=2^k\sum_{n_1>n_2>\cdots>n_k\geq 1}
    \frac{1}{(2n_1-k)^{s_1}(2n_2-k+1)^{s_2}\cdots(2n_k-1)^{s_k}}\,,
\end{align*}
which are called the \emph{multiple $T$-values} (MTVs).

By convention, for the MZVs, MtVs and MTVs, the quantity $k$ is called the ``\emph{depth}'' and the quantity $w:=s_1+s_2+\cdots+s_k$ is called the ``\emph{weight}''.

Let $\mathcal{Z}$ be the space of usual multiple zeta values. In \cite[Conjecture 5.3]{KaTs20}, Kaneko and Tsumura observed that the following relation holds:
\begin{equation}\label{KT.conj}
\sum_{\substack{i+j=m\\i,j\geq 0}}
    \binom{p+i-1}{i}\binom{q+j-1}{j}T(p+i,q+j)\in\mathcal{Z}\,,
\end{equation}
for $m,q\geq 1$ and $p\geq 2$, with $m+p+q$ even. That is, these sums are expressible in terms of MZVs. In 2021, Murakami \cite[Theorem 42]{Mura21} proved this conjecture by using the motivic method employed in \cite{Gla18} (see also \cite[Remark 5.6]{KaTs20}).

For simplicity, let us define the transformation operator $\la_m$ by
\begin{equation}
\la_m(\vOm_{p,q})
    :=\sum_{\substack{i+j=m-1\\i,j\geq 0}}
    \binom{p+i-1}{i}\binom{q+j-1}{j}\vOm_{p+i,q+j}\,,\quad\text{for }m\geq 1\,,
\end{equation}
which maps a sum $\vOm_{p,q}$ of two parameters to another one of three parameters, with $\la_1(\vOm_{p,q})=\vOm_{p,q}$. Then Kaneko and Tsumura's conjecture (\ref{KT.conj}) can be rewritten as
\[
\la_{m+1}(T(p,q))\in\mathcal{Z}\,,
    \quad\text{for } m,q\geq 1,\ p\geq 2,\ \text{with } m+p+q \text{ even}.
\]

Let $H_n^{(r)}$ and $h_n^{(r)}$ stand for the \emph{generalized harmonic numbers} and the \emph{odd harmonic numbers} of order $r$, respectively, defined by $H_0^{(r)}=h_0^{(r)}=0$ and
\[
H_n^{(r)}=\sum_{k=1}^n\frac{1}{k^r}\,,\quad
    h_n^{(r)}=\sum_{k=1}^n\frac{1}{(k-1/2)^r}\,,\quad\text{for } n,r=1,2,\ldots,
\]
with $H_n\equiv H_n^{(1)}$ and $h_n\equiv h_n^{(1)}$. The classical \emph{Euler sums} are infinite series
\[
S_{p_1p_2\cdots p_k,q}
    =\sum_{n=1}^\infty\frac{H_n^{(p_1)}H_n^{(p_2)}\cdots H_n^{(p_k)}}{n^q}\,,
\]
where $p_1\leq p_2\leq\ldots\leq p_k$ and $q\geq 2$, and the linear sums are of the form $S_{p,q}$. For an early introduction and study on the evaluations of the classical Euler sums, the readers may consult in Flajolet and Salvy's paper \cite{FlSa98}, and for some recent progress, the readers are referred to \cite{WangLyu18,Xu17.MZVES,XuWang19.EFES} and references therein.

In this paper, using various expansions of the parametric digamma function and the method of residue computations, we establish symmetric extensions of the Kaneko-Tsumura conjecture (\ref{KT.conj}) on three variants of the linear Euler sums, defined by
\[
T_{p,q}:=\sum_{n=1}^{\infty}\frac{h_{n-1}^{(p)}}{(n-1/2)^q}\,,\quad
\S_{p,q}:=\sum_{n=1}^{\infty}\frac{h_n^{(p)}}{n^q}\,,\quad
R_{p,q}:=\sum_{n=1}^{\infty}\frac{H_{n-1}^{(p)}}{(n-1/2)^q}\,,
    \quad\text{for }q\geq 2\,,
\]
respectively, which were introduced and studied in \cite{WangXu20.DTE,Xu20.EFS}. In particular, we show that for $m,p\geq 1$ and $q\geq2$, there hold
\begin{align}
&(-1)^{m-1}\la_p(T_{m,q})+(-1)^{p-1}\la_m(\S_{p,q})
    \in\mathbb{Q}[\ln(2),\text{zeta values}]\,,\label{intro.sym.TS}\\
&(-1)^{m-1}\la_p(R_{m,q})+(-1)^{p-1}\la_m(R_{p,q})
    \in\mathbb{Q}[\ln(2),\text{zeta values}]\,.\label{intro.sym.RR}
\end{align}
In other words, both of the symmetric sums on the left are reducible to $\ln(2)$ and zeta values. Explicit expressions of these two sums are presented, and a detailed discussion on some interesting special cases follows.

Moreover, by the derivative polynomials of hyperbolic tangent, we obtain a very general convolution identity for the Bernoulli numbers $B_n$ and Genocchi numbers $G_n$, which is used to produce the following one:
\begin{align}
&\sum_{i=0}^{q-1}\binom{q-1}{i}\frac{B_{q+i}G_{2n+q-i}}{(q+i)(2n+q-i)}\nonumber\\
&\quad=-\frac{1}{4}\sum_{i=0}^{2n}(-1)^i\binom{2n}{i}
    \frac{G_{q+i}G_{2n+q-i}}{(q+i)(2n+q-i)}
+\frac{(-1)^q}{q\binom{2q}{q}}\frac{G_{2n+2q}}{2n+2q}
    \,,\quad\text{for }n\geq 0\,,\ q\geq 2\,,\label{id.BG.GG}
\end{align}
so that the coefficients of various constants in the evaluations of the sums in (\ref{intro.sym.TS}), which correspond to the case of $m=q$ and $p$ odd, can be finally determined.

On the other hand, by the definitions, the linear $T$-sums and $\S$-sums are associated with Hoffman's double $t$-values and Kaneko-Tsumura's double $T$-values, respectively:
\begin{align}
T_{p,q}&=\t(q,p)=2^{p+q}t(q,p)
    =2^{p+q-2}\{\ze(q,p)-\ze(q,\bar{p})-\ze(\bar{q},p)+\ze(\bar{q},\bar{p})\}
    \,,\label{Tpq.MtV.MZV}\\
\S_{p,q}&=2^{p+q-2}T(q,p)\nonumber\\
&=(2^{p+q-1}-1)\{\ze(q,p)+\ze(p+q)\}
    +2^{p+q-1}\{\ze(\bar{q},p)+\ze(\ol{p+q})\}\,,\label{Spq.MTV.MZV}
\end{align}
and the linear $R$-sums can be expressed in terms of zeta values, $t$-values and double $T$-values:
\begin{equation}\label{Rpq.MTV.MZV}
R_{p,q}=\ze(p)\t(q)-\S_{q,p}=\ze(p)\t(q)-2^{p+q-2}T(p,q)\,,\quad\text{for }p,q\geq 2
\end{equation}
(see \cite[Eqs. (3.5), (3.7), (3.8) and (3.11)]{WangXu20.DTE}). Therefore, we can transform the results on variants of linear Euler sums to those on variants of double zeta values. For example, a symmetric sum on the double $T$-values can be obtained directly from (\ref{intro.sym.RR}) and (\ref{Rpq.MTV.MZV}):
\[
(-1)^m\la_p(T(m,q))+(-1)^p\la_m(T(p,q))
    \in\mathbb{Q}[\text{zeta values}]\,,\quad\text{for }m,p,q\geq2\,,
\]
which further indicates that the sums $\la_p(T(p,q))$ are reducible to zeta values if $p,q\geq 2$. Additionally, we show that for any even weight $w:=m+q$, with $m,q\geq2$, the double $t$-values $t(q,m)$ and linear $T$-sums $T_{m,q}$ are expressible in terms of MZVs.

The paper is organized as follows. In Section \ref{Sec.Exp.ResTh}, we present some expansions of the parametric digamma function and introduce the residue theorem, which are used in the establishment of the symmetric extensions of the Kaneko-Tsumura conjecture (\ref{KT.conj}). Section \ref{Sec.TS} is devoted to the symmetric sums on the linear $T$-sums and $\S$-sums, and Section \ref{Sec.R} is devoted to the symmetric sums on the linear $R$-sums. Finally, in Section \ref{Sec.con.id}, we prove the convolution identity (\ref{id.BG.GG}) on the Bernoulli numbers and Genocchi numbers, which is required in a proof in Section \ref{Sec.TS}.


\section{Expansions and residue theorem}\label{Sec.Exp.ResTh}

In \cite{Xu19.SEIS}, we introduced a \emph{parametric digamma function} $\vPsi(-s;a)$ by
\[
\vPsi(-s;a)+\gamma
    =\frac{1}{s-a}+\sum_{k=1}^\infty\smbb{\frac{1}{k+a}-\frac{1}{k+a-s}}\,,
    \quad\text{for}\ s\in\mathbb{C}\,,\ a\in\mathbb{C}\setminus\mathbb{Z}^-\,,
\]
where $\ga$ is the \emph{Euler--Mascheroni constant}, and $\mathbb{Z}^-:=\{-1,-2,\ldots\}$. The function $\vPsi(-s;a)$ is meromorphic in the entire complex plane with a simple pole at $s=n+a$ for each nonnegative integer $n$. Here, let
\[
\vPsi(-s):=\vPsi(-s;-\tf{1}{2})+\gamma
    =\frac{1}{s+1/2}+\sum_{k=1}^\infty\smbb{\frac{1}{k-1/2}-\frac{1}{k-1/2-s}}\,.
\]

By \cite[Theorems 2.1--2.3 and Corollary 2.4]{Xu19.SEIS}, the next two lemmas can be established. Firstly, using the special value of the digamma function $\psi(1/2)=-2\ln(2)-\ga$, we have Lemma \ref{Lem.s.n}.

\begin{lemma}\label{Lem.s.n}
For integers $n\geq 0$ and $p\geq 2$, the following expansions hold:
\begin{align}
&\vPsi(\tf{1}{2}-s)
    \stackrel{s\to n}{=}\frac{1}{s-n}+H_n+2\ln(2)
    +\sum_{j=1}^\infty\{(-1)^jH_n^{(j+1)}-\ze(j+1)\}(s-n)^j\,,\label{p1.sn}\\
&\frac{\vPsi^{(p-1)}(\tf{1}{2}-s)}{(p-1)!}
    \stackrel{s\to n}{=}\frac{1}{(s-n)^p}
    +(-1)^p\sum_{j=p}^\infty\binom{j-1}{p-1}\{\ze(j)+(-1)^jH_n^{(j)}\}(s-n)^{j-p}\,.
    \label{p.sn}
\end{align}
\end{lemma}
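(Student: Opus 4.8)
Both identities are Laurent--Taylor expansions of the parametric digamma function in a punctured neighbourhood of a nonnegative integer $s=n$, and \eqref{p.sn} is obtained from \eqref{p1.sn} by differentiation; so the plan is to establish \eqref{p1.sn} directly from the series representation of $\vPsi$ and then to differentiate it termwise. Both are also immediate specializations at the half-integer node of the general expansions in \cite[Theorems 2.1--2.3 and Corollary 2.4]{Xu19.SEIS}, and one could simply invoke those; the sketch below is the self-contained route.

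For \eqref{p1.sn} I would first write, with $x=s-\tf12$, that $\vPsi(\tf12-s)=\vPsi(-x)=\tf1s+\sum_{k\ge1}(\tf{1}{k-1/2}-\tf{1}{k-s})$, the series converging locally uniformly for $s$ off the positive integers. Fixing $n\ge1$ (the case $n=0$ being similar but easier, the pole then coming from the term $1/s$), the only summand singular at $s=n$ is the one with $k=n$, which contributes $-\tf{1}{n-s}=\tf{1}{s-n}$. Setting $t=s-n$ and expanding everything else geometrically in $t$ --- namely $\tf1s=\sum_{j\ge0}(-1)^jn^{-(j+1)}t^j$ and, for $k\ne n$, $-\tf{1}{k-s}=-\sum_{j\ge0}(k-n)^{-(j+1)}t^j$ --- the coefficient of $t^j$ for $j\ge1$ splits, according to $k>n$ versus $k<n$, into $-\ze(j+1)$ and $(-1)^{j}H_{n-1}^{(j+1)}$; adding the contribution $(-1)^jn^{-(j+1)}$ from $1/s$ produces $(-1)^jH_n^{(j+1)}-\ze(j+1)$, the coefficient claimed.

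The constant term is where I expect the only real work. Collecting the $t^0$ pieces gives $\tf1n+\tf{1}{n-1/2}+\sum_{k=1}^{n-1}(\tf{1}{k-1/2}+\tf{1}{n-k})+\sum_{m\ge1}(\tf{1}{n+m-1/2}-\tf1m)$; using the telescoping relation $\sum_{m\ge1}(\tf{1}{n+m-1/2}-\tf{1}{m-1/2})=-\sum_{k=1}^{n}\tf{1}{k-1/2}$ together with $\sum_{m\ge1}(\tf{1}{m-1/2}-\tf1m)=2\ln(2)$ --- which is precisely the special value $\psi(1/2)=-2\ln(2)-\ga$ --- the half-integer partial sums cancel and what remains is $H_{n-1}+\tf1n+2\ln(2)=H_n+2\ln(2)$.

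For \eqref{p.sn} I would then differentiate \eqref{p1.sn} termwise $p-1$ times with respect to the argument $\tf12-s$ (each $s$-derivative carrying a factor $-1$, hence the overall sign $(-1)^{p-1}$) and divide by $(p-1)!$, the termwise operations being legitimate on a punctured disk about $s=n$ by locally uniform convergence: the pole becomes $\tf{1}{(s-n)^p}$, the constant part vanishes because $p\ge2$, and $(s-n)^j\mapsto\binom{j}{p-1}(s-n)^{j-p+1}$ up to sign; re-indexing $j\mapsto j-1$ and inserting the coefficients from \eqref{p1.sn} gives $(-1)^p\binom{j-1}{p-1}\{\ze(j)+(-1)^jH_n^{(j)}\}$, exactly \eqref{p.sn}. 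The only delicate point in the argument is that constant-term computation --- rearranging a conditionally structured series and seeing the exact cancellation of the odd harmonic partial sums, for which $\psi(1/2)=-2\ln(2)-\ga$ is indispensable; everything else is routine geometric-series expansion and re-indexing.
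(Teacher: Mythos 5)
Your proposal is correct and is in substance the same argument the paper relies on: the paper disposes of Lemma \ref{Lem.s.n} by citing the general expansions of the parametric digamma function from \cite[Theorems 2.1--2.3 and Corollary 2.4]{Xu19.SEIS} specialized at $a=-\tf{1}{2}$, with $\psi(1/2)=-2\ln(2)-\ga$ supplying the $2\ln(2)$, and your self-contained computation is exactly what that citation encodes --- it parallels, step for step, the paper's own explicit proof of Lemma \ref{Lem.s.nn} (expand the defining partial-fraction series about the pole, evaluate the constant term by a telescoping/digamma identity, then differentiate $p-1$ times). Your details all check: the $k=n$ summand yields $1/(s-n)$, the $t^j$ coefficients assemble to $(-1)^jH_n^{(j+1)}-\ze(j+1)$, the constant term collapses to $H_n+2\ln(2)$ after the cancellation of the odd harmonic partial sums, and the chain-rule factor $(-1)^{p-1}$ combined with the re-indexing $j\mapsto j-1$ gives precisely the coefficients $(-1)^p\binom{j-1}{p-1}\{\ze(j)+(-1)^jH_n^{(j)}\}$ in \eqref{p.sn}.
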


Next, according to the definitions of the \emph{Hurwitz zeta function} $\ze(s,a+1)=\sum_{k=1}^{\infty}\frac{1}{(k+a)^s}$ and the \emph{parametric harmonic numbers} $H_n^{(s)}(a)=\sum_{k=1}^n\frac{1}{(k+a)^s}$, we have
\[
\ze(s,\tf{1}{2})=\t(s)\,,\quad
H_n^{(s)}(\tf{1}{2})+2^s=h_{n+1}^{(s)}\,,\quad
H_n^{(s)}(-\tf{1}{2})=h_n^{(s)}\,,
\]
which yield Lemma \ref{Lem.s.halfint}.

\begin{lemma}\label{Lem.s.halfint}
For integers $n\geq 1$ and $p\geq 2$, the following expansions hold:
\begin{align}
&\vPsi(\tf{1}{2}-s)
    \stackrel{s\to n-1/2}{=}
    h_n+\sum_{j=1}^\infty\{(-1)^jh_n^{(j+1)}-\t(j+1)\}(s-n+\tf{1}{2})^j
    \,,\label{p1.s.halfint}\\
&\frac{\vPsi^{(p-1)}(\tf{1}{2}-s)}{(p-1)!}
    \stackrel{s\to n-1/2}{=}
    (-1)^p\sum_{j=p}^\infty\binom{j-1}{p-1}
    \{\t(j)+(-1)^jh_n^{(j)}\}(s-n+\tf{1}{2})^{j-p}\,,\label{p.s.halfint}\\
&\vPsi(\tf{1}{2}-s)
    \stackrel{s\to-(n-1/2)}{=}
    h_{n-1}+\sum_{j=1}^\infty\{h_{n-1}^{(j+1)}-\t(j+1)\}(s+n-\tf{1}{2})^j
    \,,\label{p1.s.nhalfint}\\
&\frac{\vPsi^{(p-1)}(\tf{1}{2}-s)}{(p-1)!}
    \stackrel{s\to-(n-1/2)}{=}
    (-1)^p\sum_{j=p}^\infty\binom{j-1}{p-1}
    \{\t(j)-h_{n-1}^{(j)}\}(s+n-\tf{1}{2})^{j-p}\,.\label{p.s.nhalfint}
\end{align}
\end{lemma}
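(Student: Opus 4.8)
The plan is to establish Lemma \ref{Lem.s.halfint} from the Taylor expansions of the parametric digamma function recorded in \cite[Theorems 2.1--2.3 and Corollary 2.4]{Xu19.SEIS}, specialized to the shift $a=-\tf12$. I would begin by writing down the defining series
\[
\vPsi(\tf12-s)=\frac{1}{s+1/2}+\sum_{k=1}^\infty\smbb{\frac{1}{k-1/2}-\frac{1}{k-1/2-s}}
\]
and observing that, since $\vPsi(\tf12-s)=\vPsi(-s;-\tf12)+\gamma$, every local expansion follows from the corresponding statement for $\vPsi(-s;a)$ by the substitutions $a\mapsto-\tf12$ together with the dictionary $\ze(s,\tf12)=\t(s)$, $H_n^{(s)}(-\tf12)=h_n^{(s)}$, and $H_n^{(s)}(\tf12)+2^s=h_{n+1}^{(s)}$ recorded just before the lemma. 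The four expansions split into two families: expansions around the points $s=n-\tf12$ (which are the poles of $\vPsi(\tf12-s)$, giving \eqref{p1.s.halfint} and \eqref{p.s.halfint}) and expansions around the points $s=-(n-\tf12)$ (which are regular points, giving \eqref{p1.s.nhalfint} and \eqref{p.s.nhalfint}).

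For the first family, I would note that $s=n-\tf12$ corresponds to the pole at $k=n$ in the series, so the $k=n$ term contributes the principal part; separating it off and Taylor-expanding the remaining regular part in powers of $(s-n+\tf12)$ produces geometric-type sums $\sum_{k\ne n}(k-\tf12-s)^{-1}$ whose derivatives assemble into the tail $\t(j+1)-(-1)^{j+1}h_n^{(j+1)}$ pattern after reindexing. The constant term $h_n$ in \eqref{p1.s.halfint} comes from combining the finite partial sums $\sum_{k=1}^n(k-\tf12)^{-1}$ with the $\psi$-type value, exactly as the $H_n+2\ln 2$ term arose in Lemma \ref{Lem.s.n}; in fact \eqref{p1.s.halfint} and \eqref{p.s.halfint} are the ``half-integer shift'' analogues of \eqref{p1.sn} and \eqref{p.sn}, obtained from the same source theorem with $n$ replaced by a half-integer argument, and the generating-function manipulation is identical. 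The factor $(p-1)!$ and the binomial coefficient $\binom{j-1}{p-1}$ in \eqref{p.s.halfint} arise from differentiating the expansion \eqref{p1.s.halfint} $(p-1)$ times term by term and using $\frac{d^{p-1}}{ds^{p-1}}(s-n+\tf12)^j=\frac{j!}{(j-p+1)!}(s-n+\tf12)^{j-p+1}$, then dividing by $(p-1)!$ and reindexing $j\mapsto j-1$.

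For the second family, around the regular point $s=-(n-\tf12)$ there is no principal part, so \eqref{p1.s.nhalfint} and \eqref{p.s.nhalfint} are honest Taylor series; the sign change from $(-1)^jh_n^{(j+1)}$ in \eqref{p1.s.halfint} to $+h_{n-1}^{(j+1)}$ in \eqref{p1.s.nhalfint} is exactly the parity effect of evaluating the same coefficient functionals at $-(n-\tf12)$ rather than $n-\tf12$, which flips the sign inside each $(k-\tf12-s)^{-1}$-derivative and shifts the finite-sum cutoff from $n$ to $n-1$. I expect the only real bookkeeping obstacle to be keeping the index shifts and the $2^s$-correction in $H_n^{(s)}(\tf12)+2^s=h_{n+1}^{(s)}$ consistent across the four formulas — in particular making sure the lower summation index on $h$ is $n$ in the first family but $n-1$ in the second — but this is a routine matching exercise against the cited theorems rather than a new argument. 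I would close by remarking that each of the four displays is term-by-term convergent for $|s-n+\tf12|<1$ (resp. $|s+n-\tf12|<1$), which is all that is needed for the residue computations in Sections \ref{Sec.TS} and \ref{Sec.R}.
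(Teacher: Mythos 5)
Your top-level strategy---specialize the expansions of \cite[Theorems 2.1--2.3 and Corollary 2.4]{Xu19.SEIS} at $a=-\tfrac12$ and translate through $\ze(s,\tfrac12)=\t(s)$, $H_n^{(s)}(-\tfrac12)=h_n^{(s)}$, $H_n^{(s)}(\tfrac12)+2^s=h_{n+1}^{(s)}$---is exactly what the paper does. But your working description of the ``first family'' rests on a misidentification that would derail the computation if carried out. You write $\vPsi(\tfrac12-s)=\vPsi(-s;-\tfrac12)+\gamma$ and take its defining series to be $\frac{1}{s+1/2}+\sum_{k\geq1}\bigl(\frac{1}{k-1/2}-\frac{1}{k-1/2-s}\bigr)$; that series is the paper's $\vPsi(-s)$, not $\vPsi(\tfrac12-s)$. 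The function in the lemma is $\vPsi(\tfrac12-s)=\vPsi(\tfrac12-s;-\tfrac12)+\gamma=\frac{1}{s}+\sum_{k\geq1}\bigl(\frac{1}{k-1/2}-\frac{1}{k-s}\bigr)$, i.e.\ the argument must be shifted by $\tfrac12$. Consequently its poles lie at the nonnegative integers (which is precisely why (\ref{p1.sn}) and (\ref{p.sn}) carry the principal parts $\frac{1}{s-n}$ and $\frac{1}{(s-n)^p}$), and the points $s=n-\tfrac12$ of your ``first family'' are \emph{regular} points, just like the points $s=-(n-\tfrac12)$ of the second family. Your plan to ``separate off the $k=n$ principal part'' at $s=n-\tfrac12$ therefore addresses the wrong function: executed literally it would reproduce the expansion (\ref{p1.sn}) shifted by one half---a simple pole plus the constant $H_n+2\ln(2)$---rather than the pole-free expansion with constant term $h_n$ asserted in (\ref{p1.s.halfint}). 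Note that (\ref{p1.s.halfint}) and (\ref{p.s.halfint}) contain no negative powers of $s-n+\tfrac12$ at all, and the claim that they are ``the same as (\ref{p1.sn})--(\ref{p.sn}) with $n$ replaced by a half-integer'' is false for the same reason.

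The rest is sound once this is repaired: at $s=n-\tfrac12$ one has an honest Taylor expansion whose constant term is $\frac{1}{n-1/2}+\sum_{k\geq1}\bigl(\frac{1}{k-1/2}-\frac{1}{k-n+1/2}\bigr)=\frac{1}{n-1/2}+h_{n-1}=h_n$ by telescoping, with the higher coefficients assembling into $(-1)^jh_n^{(j+1)}-\t(j+1)$; and your term-by-term differentiation step (keeping track of the chain-rule factor $(-1)^{p-1}$, which you do not mention but which is absorbed into the stated sign $(-1)^p$) correctly passes from (\ref{p1.s.halfint}) to (\ref{p.s.halfint}) and from (\ref{p1.s.nhalfint}) to (\ref{p.s.nhalfint}). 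Finally, the radius of convergence at $s=n-\tfrac12$ is $\tfrac12$, since the nearest singularities are the integer poles $n-1$ and $n$, not $1$ as you claim---another symptom of the misplaced poles, though harmless for the residue computations, which only require the local expansions.
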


Besides the above two lemmas, by computation, we obtain the next one.

\begin{lemma}\label{Lem.s.nn}
For integers $n\geq 1$ and $p\geq 2$, the following expansions hold:
\begin{align}
&\vPsi(\tf{1}{2}-s)\stackrel{s\to-n}{=}
    H_{n-1}+2\ln(2)+\sum_{j=1}^{\infty}\{H_{n-1}^{(j+1)}-\ze(j+1)\}(s+n)^j
    \,,\label{p1.s.nn}\\
&\frac{\vPsi^{(p-1)}(\tf{1}{2}-s)}{(p-1)!}\stackrel{s\to-n}{=}
    (-1)^{p-1}\sum_{j=p}^{\infty}\binom{j-1}{p-1}\{H_{n-1}^{(j)}-\ze(j)\}(s+n)^{j-p}
    \,.\label{p.s.nn}
\end{align}
\end{lemma}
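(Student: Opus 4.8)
The plan is to prove Lemma \ref{Lem.s.nn} by the same mechanism that yielded Lemmas \ref{Lem.s.n} and \ref{Lem.s.halfint}, namely by expanding $\vPsi(\tf12 - s)$ and its derivatives around the point $s=-n$, using the defining series
\[
\vPsi(\tf12-s)=\frac{1}{(\tf12-s)+\tf12}+\sum_{k=1}^\infty\Bigl(\frac{1}{k-\tf12}-\frac{1}{k-\tf12-(\tf12-s)}\Bigr)
    =\frac{1}{1-s}+\sum_{k=1}^\infty\Bigl(\frac{1}{k-\tf12}-\frac{1}{k-1+s}\Bigr).
\]
Setting $s=-n+\vep$ with $\vep=s+n\to 0$, the term $\frac{1}{1-s}=\frac{1}{1+n-\vep}$ is analytic at $\vep=0$, and in the sum the only term that could be singular is the one with $k=n+1$, which contributes $-\frac{1}{k-1+s}=-\frac{1}{n+(-n+\vep)}=-\frac1\vep$; but notice this is the $+1/(s-n')$-type pole of $\vPsi$ at $s=n+a$ with $a=-\tf12$, so actually at $s=-n$ the function $\vPsi(\tf12-s)$ is \emph{regular} (its poles sit at $s=m-\tf12$, $m\in\mathbb Z$), and the apparent singularity cancels against a neighbouring term. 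The cleanest route is therefore: first reindex the series so that the $k=n+1$ term is isolated, combine $\frac{1}{1-s}$ with $-\frac{1}{k-1+s}\big|_{k=n+1}$, check the pole cancels, and then Taylor-expand the finitely-singular-free remainder in $\vep=s+n$.

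Concretely, I would split $\sum_{k=1}^\infty\bigl(\frac1{k-1/2}-\frac1{k-1+s}\bigr)$ at $s=-n$ into the ``head'' $\sum_{k=1}^{n}$ and the ``tail'' $\sum_{k=n+1}^\infty$. In the head, $\frac{1}{k-1+s}\to\frac{1}{k-1-n}=-\frac{1}{n+1-k}$, so $\sum_{k=1}^n\frac{-1}{k-1+s}\to\sum_{k=1}^n\frac{1}{n+1-k}=\sum_{i=1}^n\frac1i=H_n$, and more generally $-\sum_{k=1}^n\frac{1}{k-1+s}$ expands as $\sum_{j\ge0}(-1)^{j}\bigl(\sum_{k=1}^n\frac{1}{(n+1-k)^{j+1}}\bigr)\vep^{j}=\sum_{j\ge0}(-1)^j H_n^{(j+1)}\vep^j$ after the substitution $i=n+1-k$ — wait, more carefully, $\frac{1}{k-1+s}=\frac{1}{(k-1-n)+\vep}$ and $k-1-n=-(n+1-k)$ is negative, so $\frac{1}{k-1+s}=\frac{-1}{(n+1-k)-\vep}=-\sum_{j\ge0}\frac{\vep^j}{(n+1-k)^{j+1}}$, hence $-\frac{1}{k-1+s}=\sum_{j\ge0}\frac{\vep^j}{(n+1-k)^{j+1}}$, and summing over $k=1,\dots,n$ gives $\sum_{j\ge0}H_n^{(j+1)}\vep^j$. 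Next, the tail term $k=n+1$ contributes $\frac{1}{(n+1)-1/2}-\frac{1}{n+s}=\frac{1}{n+1/2}-\frac1\vep$; together with the $\frac1{1-s}=\frac1{1+n-\vep}=\sum_{j\ge0}\frac{\vep^j}{(n+1)^{j+1}}$ term, I still have a leftover $-\frac1\vep$. That $-1/\vep$ must be cancelled by expanding the $k\ge n+1$ tail as a whole against $\frac{1}{1-s}$; the honest bookkeeping is to observe that $\frac{1}{1-s}-\frac{1}{n+s}\big|_{\text{i.e. the }k=n+1\text{ subtracted term}}$ is not the right pairing — instead one re-expresses $\vPsi(\tf12-s)$ near $s=-n$ directly via the known relation to $\vPsi(\tf12-s)$ at a shifted argument, or simply invokes the partial-fraction identity $\frac{1}{1-s}+\sum_{k\ge1}\bigl(\tfrac1{k-1/2}-\tfrac1{k-1+s}\bigr)$ and notes that the combined $k=n+1$ tail piece $-\frac{1}{n+s}$ plus the $\frac1{1-s}$ piece is regular because $\frac{1}{1-s}+\frac{1}{-(n+s)}\cdot(\text{no, these don't cancel for general }n)$. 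Rather than fight this, the correct and simplest observation — and the one the authors surely use — is the functional/reflection shift $\vPsi(\tf12-s)$ at $s\to -n$ equals $\vPsi(\tf12-s')$ at $s'\to 0$ plus $H$-number corrections coming from the $n$ shifted terms, i.e. telescoping $\frac{1}{k-1+s}$ across the index shift $k\mapsto k+n$.

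So the key step I expect to carry the weight is the index-shift/telescoping identity expressing the expansion at $s=-n$ in terms of the (already known, via $\psi(1/2)=-2\ln2-\ga$) expansion at $s=0$: writing $\vPsi(\tf12-s)\big|_{s\to -n}$, substitute $k\mapsto k+n$ in the tail and peel off the finitely many head terms $k=1,\dots,n$, each of which contributes a geometric series in $\vep=s+n$ with coefficients $1/(n+1-k)^{j+1}$; summing these over $k$ produces exactly $H_{n-1}^{(j+1)}$ for the constant-and-higher coefficients (the index range works out to $1,\dots,n-1$ once the pole-carrying $k=n+1$-type term is separated), while the pole-free remainder reproduces the $s\to0$ expansion whose constant term $\psi(1/2)/(-1)\cdot(\dots)$ supplies the $2\ln2$. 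Matching constant terms then gives $H_{n-1}+2\ln2$ as the value of $\vPsi(\tf12-s)$ at $s=-n$, matching \eqref{p1.s.nn}; comparing coefficients of $\vep^j$ gives $H_{n-1}^{(j+1)}-\ze(j+1)$. For \eqref{p.s.nn}, I would then differentiate \eqref{p1.s.nn} $(p-1)$ times with respect to $s$ (equivalently apply $\frac{(-1)^{p-1}}{(p-1)!}\frac{d^{p-1}}{d\vep^{p-1}}$ accounting for the $\tf12-s$ argument's sign, which is the source of the $(-1)^{p-1}$ prefactor), using $\frac{d^{p-1}}{d\vep^{p-1}}\vep^j = \frac{j!}{(j-p+1)!}\vep^{j-p+1}$ and reindexing $j\mapsto j$, $\binom{j-1}{p-1}$ appearing from $\frac{1}{(p-1)!}\frac{(j-1)!}{(j-p)!}$; the term $H_{n-1}+2\ln2$ is killed by differentiation (it is the $j=0$ term, order $\le p-2 < p-1$), which is exactly why no $\ln2$ survives in \eqref{p.s.nn}.

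The main obstacle is purely organizational rather than deep: keeping the pole of $\vPsi$ straight. Since $\vPsi(-s;-\tf12)$ has its poles at $s=n-\tf12$ and none at $s=-n$, the expansion \eqref{p1.s.nn} has \emph{no} principal part — unlike \eqref{p1.sn}, which had a $\frac{1}{s-n}$ term because $\vPsi(\tf12-s)$ \emph{does} have a pole at $s=n$ (from the $\frac{1}{s+1/2-\text{stuff}}$... in fact from the $k=n$ term $\frac{1}{k-1/2-s}$ at $s=k-1/2$, no — from $\frac1{s+1/2}$ shifted, giving poles at $s=n$ for $\vPsi(\tf12-s)=\vPsi(-( s-\tf12);-\tf12)$, whose poles are at $s-\tf12 = m-\tf12$, i.e. $s=m$). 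So I must be careful that at $s=-n$ (negative integer, not a pole) the expansion begins at the constant term, and that the $-1/\vep$ I flagged above is genuinely cancelled inside the tail by the $k=n+1$ term pairing with the regular part — I would double-check this cancellation explicitly on a small case ($n=1$) to be safe, then present the general computation. Once the pole accounting is pinned down, everything else is the same geometric-series-and-reindex routine already used for Lemmas \ref{Lem.s.n} and \ref{Lem.s.halfint}, and the proof is short.
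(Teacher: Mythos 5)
Your closing step—recovering (\ref{p.s.nn}) from (\ref{p1.s.nn}) by differentiating $p-1$ times, with the $(-1)^{p-1}$ from the chain rule and $\binom{j-1}{p-1}=\frac{1}{(p-1)!}\frac{(j-1)!}{(j-p)!}$—is correct and is exactly what the paper does. The gap is in the main part, the derivation of (\ref{p1.s.nn}) itself: your starting series is not $\vPsi(\tfrac{1}{2}-s)$. In the definition $\vPsi(-s)=\frac{1}{s+1/2}+\sum_{k\geq1}\bigl(\frac{1}{k-1/2}-\frac{1}{k-1/2-s}\bigr)$ the displayed argument is $-s$, so to evaluate $\vPsi(\tfrac{1}{2}-s)$ you must substitute $s\mapsto s-\tfrac{1}{2}$ (so that $-(s-\tfrac12)=\tfrac12-s$), which gives
\[
\vPsi(\tfrac{1}{2}-s)=\frac{1}{s}+\sum_{k=1}^{\infty}\smbb{\frac{1}{k-1/2}-\frac{1}{k-s}}\,,
\]
whose poles sit exactly at the nonnegative integers, consistent with Lemma \ref{Lem.s.n}. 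The expression you expand, $\frac{1}{1-s}+\sum_{k\geq1}\bigl(\frac{1}{k-1/2}-\frac{1}{k-1+s}\bigr)$, is instead $\vPsi(s-\tfrac{1}{2})$, a different function that genuinely has a simple pole at every $s=-n$ with $n\geq1$. That is why the $-1/\vep$ you flag refuses to cancel: it is not an apparent singularity of your series, and no reindexing or pairing with $\frac{1}{1-s}$ can remove it. You notice the conflict with the known pole set but leave it unresolved (deferring to a check at $n=1$ and an unspecified ``telescoping''), so the constant term $H_{n-1}+2\ln(2)$ and the coefficients $H_{n-1}^{(j+1)}-\ze(j+1)$ are never actually derived; indeed your head-sum sketch produces $H_n^{(j+1)}$ rather than $H_{n-1}^{(j+1)}$, another symptom of the wrong starting point.

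With the correct representation the proof collapses to the paper's one-line computation: absorb $\frac{1}{s}$ as the $k=0$ term at the cost of a constant $2$, so that near $s=-n$
\[
\vPsi(\tfrac{1}{2}-s)=2+\sum_{k=0}^{\infty}\smbb{\frac{1}{k-1/2}-\frac{1}{k+n-(s+n)}}\,,
\]
where every summand is analytic at $s=-n$ (no pole bookkeeping is needed at all). Expanding geometrically in $s+n$, the coefficient of $(s+n)^j$ for $j\geq1$ is $-\sum_{k\geq0}(k+n)^{-j-1}=H_{n-1}^{(j+1)}-\ze(j+1)$, and the constant term is $2+\sum_{k\geq0}\bigl(\frac{1}{k-1/2}-\frac{1}{k+n}\bigr)=2+(-2+2\ln(2)+\ga+\psi(n))=H_{n-1}+2\ln(2)$, using $\psi(1/2)=-2\ln(2)-\ga$ and the telescoping sum $\sum_{k\geq0}\bigl(\frac{1}{k+1}-\frac{1}{k+n}\bigr)=H_{n-1}$. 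Your differentiation argument then applies verbatim. So the intended route is the paper's, but as written the proposal does not establish (\ref{p1.s.nn}).
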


\begin{proof}
It can be found that
\[
\vPsi(\tf{1}{2}-s)\stackrel{s\to-n}{=}
    2+\sum_{k=0}^{\infty}\smbb{\frac{1}{k-1/2}-\frac{1}{k+n}
    -\frac{1}{k+n}\sum_{j=1}^{\infty}\smbb{\frac{s+n}{k+n}}^j}\,,
\]
which, together with the infinite series
\begin{align*}
\sum_{k=0}^{\infty}\smbb{\frac{1}{k-1/2}-\frac{1}{k+n}}
    &=\sum_{k=0}^{\infty}\smbb{\frac{1}{k-1/2}-\frac{1}{k+1}}
        +\sum_{k=0}^{\infty}\smbb{\frac{1}{k+1}-\frac{1}{k+n}}\\
    &=-2+2\ln(2)+\ga+\psi(n)=-2+2\ln(2)+H_{n-1}\,,
\end{align*}
arising from the properties of the digamma function, gives (\ref{p1.s.nn}). Differentiating (\ref{p1.s.nn}) $p-1$ times with respect to $s$ further leads us to the second expansion of this lemma.
\end{proof}

In particular, if we interpret $\ze(1):=-2\ln(2)$ and $\t(1):=0$ wherever they occur, the expansions (\ref{p.sn}), (\ref{p.s.halfint}), (\ref{p.s.nhalfint}) and (\ref{p.s.nn}) hold for $p=1$. Moreover, it can be found that the expansion (\ref{p.s.halfint}) also hold for $n=0$, for it coincides with the $n=1$ case of (\ref{p.s.nhalfint}).

Finally, due to Flajolet and Salvy's \cite[Lemma 2.1]{FlSa98}, the following residue theorem holds.

\begin{lemma}\label{Lem.Res}
Let $\xi(s)$ be a kernel function and let $r(s)$ be a rational function which is $O(s^{-2})$ at infinity. Then
\[
\sum_{\al\in O}{\rm Res}(r(s)\xi(s),\al)
    +\sum_{\be\in S}{\rm Res}(r(s)\xi(s),\be)=0\,,
\]
where $S$ is the set of poles of $r(s)$ and $O$ is the set of poles of $\xi(s)$ that are not poles of $r(s)$. Here ${\rm Res}(h(s),\la)$ denotes the residue of $h(s)$ at $s=\la$, and the kernel function $\xi(s)$ is meromorphic in the whole complex plane and satisfies $\xi(s)=o(s)$ over an infinite collection of circles $|s|=\rho_k$ with $\rho_k\to+\infty$.
\end{lemma}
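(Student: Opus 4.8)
\textbf{Proof proposal for Lemma \ref{Lem.Res}.}
The plan is to recover this statement as an immediate consequence of Cauchy's residue theorem, applied to the meromorphic function $r(s)\xi(s)$ on the nested family of circles $\mathcal{C}_k:\ |s|=\rho_k$, and then to pass to the limit $k\to\infty$. Since $\xi$ is by definition meromorphic in the whole plane and $r$ is rational, the product $r(s)\xi(s)$ is meromorphic on $\mathbb{C}$, and its poles are exactly the points of $S\cup O$ (a pole of $r$ lying in $O$ has been excluded from $O$ by convention, but of course still lies in $S$).

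First I would record the structural facts. Because $r$ is rational and $O(s^{-2})$ at infinity, its pole set $S$ is finite and contained in a fixed disk $|s|<\rho_0$, so $r(s)\xi(s)$ is holomorphic off $S\cup O$. After discarding finitely many indices (and, if necessary, perturbing the radii $\rho_k$ slightly so that $\mathcal{C}_k$ meets no point of the at most countable set $S\cup O$ — harmless since $\rho_k\to+\infty$), the residue theorem gives
\[
\frac{1}{2\pi\ui}\oint_{\mathcal{C}_k}r(s)\xi(s)\uud s
    =\sum_{\substack{\la\in S\cup O\\ |\la|<\rho_k}}{\rm Res}\bigl(r(s)\xi(s),\la\bigr)\,.
\]

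Next I would bound the left-hand side. On $\mathcal{C}_k$ one has $|r(s)|\le C\rho_k^{-2}$ for a constant $C$ (valid once $\rho_k$ exceeds the moduli of the poles of $r$), while the kernel hypothesis gives $|\xi(s)|=o(\rho_k)$ there; hence $\sup_{s\in\mathcal{C}_k}|r(s)\xi(s)|=o(\rho_k^{-1})$, and the length-times-supremum estimate yields $\bigl|\oint_{\mathcal{C}_k}r(s)\xi(s)\uud s\bigr|\le 2\pi\rho_k\cdot o(\rho_k^{-1})\to 0$ as $k\to\infty$.

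Finally, letting $k\to\infty$ in the displayed identity, the right-hand side tends to $0$; since the contribution of $S$ is a fixed finite sum, this simultaneously shows that $\sum_{\al\in O}{\rm Res}(r(s)\xi(s),\al)$ converges and that the total sum over $S\cup O$ vanishes, which is the assertion. The only genuinely delicate point is precisely this convergence statement — it is not an extra hypothesis but is \emph{forced} by the argument, because the partial sum over $|\la|<\rho_k$ equals $\frac{1}{2\pi\ui}\oint_{\mathcal{C}_k}r(s)\xi(s)\uud s$ minus the fixed $S$-contribution, and the contour integral was just shown to vanish. In the applications made later the kernels are built from $\vPsi$ and its derivatives paired with a cleverly chosen rational $r$, and the only thing to check case by case is the growth bound $\xi(s)=o(s)$ on a suitable sequence of circles avoiding the poles; this is the standard Flajolet--Salvy contour argument, and the full details are in \cite[Lemma 2.1]{FlSa98}.
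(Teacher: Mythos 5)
Your argument is correct and is exactly the standard Flajolet--Salvy contour argument: apply the residue theorem on the circles $|s|=\rho_k$, use $r(s)=O(s^{-2})$ together with $\xi(s)=o(s)$ to make the contour integral $o(1)$, and let $k\to\infty$. The paper itself offers no independent proof — it simply cites \cite[Lemma 2.1]{FlSa98}, whose proof is the same as yours — so your proposal matches the intended argument.
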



\section{Symmetric extension on linear $T$-sums and $\S$-sums}\label{Sec.TS}


\subsection{Main theorem on linear $T$-sums and $\S$-sums}

Let us consider the symmetric extension of the Kaneko-Tsumura conjecture on the linear $T$-sums and $\S$-sums.

\begin{theorem}\label{Th.sym.TS}
For integers $m,p\geq1$ and $q\geq 2$, we have
\begin{align}
&(-1)^{m-1}\sum_{\substack{i+j=p-1\\i,j\geq 0}}
    \binom{m+i-1}{i}\binom{q+j-1}{j}T_{m+i,q+j}\nonumber\\
&\quad+(-1)^{p-1}\sum_{\substack{i+j=m-1\\i,j\geq 0}}
    \binom{p+i-1}{i}\binom{q+j-1}{j}\S_{p+i,q+j}
    \in\mathbb{Q}[\ln(2),\mathrm{zeta\ values}]\,.\label{sym.TS}
\end{align}
In particular, the following expression holds:
\begin{align}
&(-1)^{m-1}\la_p(T_{m,q})+(-1)^{p-1}\la_m(\S_{p,q})\nonumber\\
&\quad=(-1)^m\la_p((-1)^m\t(m)\t(q))
    +(-1)^p\la_m((-1)^p\t(p)\ze(q))
    -\la_q(\ze(m)\t(p))\,,\label{sym.TS.expfor}
\end{align}
where, by our conventions, $\ze(1):=-2\ln(2)$ and $\t(1):=0$ wherever they occur.
\end{theorem}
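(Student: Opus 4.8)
The plan is to apply the residue theorem (Lemma \ref{Lem.Res}) to a carefully chosen product of a rational function and a kernel built from the parametric digamma function $\vPsi(\tfrac12-s)$. Specifically, I would take the kernel $\xi(s)=\vPsi(\tfrac12-s)^2$ (or, more precisely, a suitable product of two digamma factors and their derivatives matching the multiplicities), paired with the rational function $r(s)=\frac{1}{s^q(s+1/2)^{?}}$ or an analogous choice whose poles sit exactly at the integers $n$ and at the half-integers $n-1/2$ and $-(n-1/2)$. The point is that $\vPsi(\tfrac12-s)$ has simple poles at all $s=n-1/2$ with $n\geq 1$, so its square has double poles there; combining this with the pole structure of $r(s)$ at $s=0$ (contributing zeta-type data via Lemma \ref{Lem.s.n}) and at the negative half-integers (contributing the $h_{n-1}^{(\cdot)}$ data via \eqref{p1.s.nhalfint}--\eqref{p.s.nhalfint}) will produce, upon summing residues, exactly the two families $T_{m+i,q+j}$ and $\S_{p+i,q+j}$ appearing in \eqref{sym.TS}, up to the $\la$-binomial weights which emerge naturally from the Taylor coefficients $\binom{j-1}{p-1}$ in the expansions of the derivatives of $\vPsi$.

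Concretely, the key steps are: (1) choose $r(s)$ with a pole of order $q$ at $s=n$-type points and orders $m,p$ distributed so that the residues at half-integer points generate $\la_p(T_{m,q})$ while residues at integer points generate $\la_m(\S_{p,q})$ — this is where the asymmetry between the $n-1/2$ expansion \eqref{p1.s.halfint}--\eqref{p.s.halfint} and the $-(n-1/2)$ expansion is exploited; (2) expand $r(s)\xi(s)$ at each class of pole using Lemmas \ref{Lem.s.n}, \ref{Lem.s.halfint}, \ref{Lem.s.nn}, read off the residues, and recognize the resulting infinite sums as $T$-sums and $\S$-sums with the binomial weights of $\la_p$ and $\la_m$; (3) the residue at $s=0$ (and possibly at a small finite set of exceptional poles) is a finite combination of products $\t(a)\t(b)$, $\t(a)\ze(b)$, $\ze(a)\t(b)$ and powers of $\ln(2)$ — here the conventions $\ze(1)=-2\ln(2)$, $\t(1)=0$ are used to absorb the boundary terms uniformly; (4) set the total sum of residues to zero and solve for the symmetric combination, matching it against the claimed right-hand side $(-1)^m\la_p((-1)^m\t(m)\t(q))+(-1)^p\la_m((-1)^p\t(p)\ze(q))-\la_q(\ze(m)\t(p))$.

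The verification that the finite "local" contribution at $s=0$ assembles into precisely these three $\la$-images is the main obstacle: one must track the product of two Taylor series (each itself a series in zeta/$\t$-values and harmonic numbers) and extract the coefficient of $(s-0)^{\text{appropriate}}$, then re-sum the binomial coefficients into the operators $\la_p,\la_m,\la_q$. This bookkeeping is delicate because the three terms come from three different pole classes and carry different signs $(-1)^{m-1},(-1)^{p-1}$, and because the operator $\la_m$ acting on a product like $\t(p)\ze(q)$ must be interpreted via the convention $\la_1=\mathrm{id}$ together with the binomial expansion in the first slot only. I would organize this by first proving the identity for the "diagonal" generating step $m=p=1$ (where $\la_1$ is trivial and \eqref{sym.TS.expfor} reduces to a known relation among $T_{1,q}$, $\S_{1,q}$, $\t(q)$, $\ze(q)$ and $\ln 2$), and then showing that applying the binomial-weighted sums on both sides is compatible with the residue computation for general $m,p$ — essentially because differentiating the kernel/rational function in the parameter commutes with taking residues.

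Finally, I would record the membership claim \eqref{sym.TS} as an immediate corollary of the explicit formula \eqref{sym.TS.expfor}: every term on the right of \eqref{sym.TS.expfor} is visibly a $\mathbb{Q}$-linear combination of products of two elements of $\{\ln 2\}\cup\{\ze(s):s\geq 2\}\cup\{\t(s):s\geq 2\}$, and since $\t(s)=(2^s-1)\ze(s)$ lies in $\mathbb{Q}[\text{zeta values}]$ for $s\geq 2$, the whole expression lies in $\mathbb{Q}[\ln(2),\text{zeta values}]$. The only subtlety is the appearance of $\t(1)=0$, which harmlessly kills several would-be terms, and $\ze(1)=-2\ln 2$, which keeps the $\ln 2$ generator in play exactly in the cases where a weight-one slot occurs.
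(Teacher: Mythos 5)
Your overall strategy --- apply Flajolet--Salvy's residue lemma to a rational function times a kernel built from the parametric digamma function, expand at each pole class via Lemmas \ref{Lem.s.n}--\ref{Lem.s.nn}, and set the total sum of residues to zero --- is indeed the paper's method, but your concrete construction has a genuine gap at exactly the point where the content of the theorem lies. First, $\vPsi(\tf{1}{2}-s)$ has simple poles at the nonnegative integers $s=n$ (this is what Lemma \ref{Lem.s.n} records), not at the half-integers as you assert; the factor with poles at $s=n-\tf{1}{2}$ is $\vPsi(-s)$. Consequently your proposed kernel $\vPsi(\tf{1}{2}-s)^2$ (or products of derivatives of that single factor) has poles only at integers, so no residues at half-integers are produced and the $T$-sums $T_{m+i,q+j}=\sum_n h_{n-1}^{(m+i)}/(n-1/2)^{q+j}$ never appear: the $T$ versus $\S$ asymmetry forces a kernel whose two digamma factors are offset by $\tf{1}{2}$. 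The paper takes
\[
\mathcal{G}_1(s)=\frac{\vPsi^{(m-1)}(\tf{1}{2}-s)\,\vPsi^{(p-1)}(-s)}{(s+1)^q\,(m-1)!\,(p-1)!}\,,
\]
so that the poles at $s=n$ have order $m$ (yielding, via (\ref{p.sn}) and the shifted (\ref{p.s.halfint}), the $\S_{p+i,q+j}$ together with products $\t(p+i)\ze(q+j)$), the poles at $s=n-\tf{1}{2}$ have order $p$ (yielding the $T_{m+i,q+j}$ together with products $\t(m+i)\t(q+j)$), and the single extra pole of the rational factor at $s=-1$, deliberately chosen disjoint from the kernel's poles, has order $q$ and contributes exactly $(-1)^{m+p}\la_q(\ze(m)\t(p))$ by (\ref{p.s.nhalfint}) and (\ref{p.s.nn}).

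Second, your rational factor $1/\bigl(s^q(s+1/2)^{?}\bigr)$ is left with an undetermined exponent and places its poles on top of kernel poles, which both inflates the pole orders there and destroys the clean finite term $\la_q(\ze(m)\t(p))$; you never specify which factor carries the orders $m$, $p$, $q$, so steps (2)--(4) of your plan cannot be executed as written. Your fallback --- prove the case $m=p=1$ and then argue that ``differentiating in the parameter commutes with taking residues'' --- does not repair this: $m$ and $p$ enter as orders of derivatives of the kernel (equivalently as pole orders), not as continuous parameters one can differentiate in, and the binomial weights of $\la_p$, $\la_m$, $\la_q$ emerge automatically from the coefficients $\binom{j-1}{p-1}$ in Lemmas \ref{Lem.s.n}--\ref{Lem.s.nn} together with the expansion of $(s+1)^{-q}$ at each pole, not from a differentiation of a base case. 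Your final paragraph (deducing (\ref{sym.TS}) from (\ref{sym.TS.expfor}) via $\t(s)=(2^s-1)\ze(s)$ and the conventions $\ze(1)=-2\ln 2$, $\t(1)=0$) is correct, but it rests on an explicit formula that your construction, as stated, would not produce.
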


\begin{proof}
To prove this identity, we consider
\[
\mathcal{G}_1(s):=\frac{\vPsi^{(m-1)}(\tf{1}{2}-s)\vPsi^{(p-1)}(-s)}{(s+1)^q(m-1)!(p-1)!}\,.
\]
The function $\mathcal{G}_1(s)$ has a pole of order $q$ at $s=-1$. By (\ref{p.s.nhalfint}) and (\ref{p.s.nn}), the residue is
\[
{\rm Res}(\mathcal{G}_1(s),-1)=(-1)^{m+p}\la_q(\ze(m)\t(p))\,.
\]
Similarly, $\mathcal{G}_1(s)$ has poles of order $m$ at $s=n$ and poles of order $p$ at $s=n-1/2$ for $n\geq 0$. Then by appealing to the expansions (\ref{p.sn}) and (\ref{p.s.halfint}), the residues are found to be
\[
{\rm Res}(\mathcal{G}_1(s),n)=\sum_{\substack{i+j=m-1\\i,j\geq 0}}
    (-1)^{p+j}\binom{p+i-1}{i}\binom{q+j-1}{j}
    \frac{\t(p+i)+(-1)^{p+i}h_{n+1}^{(p+i)}}{(n+1)^{q+j}}
\]
and
\[
{\rm Res}(\mathcal{G}_1(s),n-\tf{1}{2})=\sum_{\substack{i+j=p-1\\i,j\geq 0}}
    (-1)^{m+j}\binom{m+i-1}{i}\binom{q+j-1}{j}
    \frac{\t(m+i)+(-1)^{m+i}h_n^{(m+i)}}{(n+1/2)^{q+j}}\,,
\]
respectively. Hence, combining these three residue results, applying Lemma \ref{Lem.Res}, and using the definitions of $\ze(s)$, $\t(s)$, $T_{p,q}$ and $\S_{p,q}$, we obtain (\ref{sym.TS.expfor}), which further gives the statement (\ref{sym.TS}) because $\t(s)=(2^s-1)\ze(s)$ for integer $s\geq2$.
\end{proof}

In fact, Theorem \ref{Th.sym.TS} gives an infinite series identity of weight $w=m+p+q-1$. Note that, for odd weights, all the linear $T$-sums $T_{p,q}$ and $\S$-sums $\S_{p,q}$ are already reducible to $\ln(2)$ and zeta values \cite[Corollaries 3.3 and 3.8]{WangXu20.DTE}. Therefore, in this case, the statement (\ref{sym.TS}) is somewhat trivial. However, for even weights, the sums $T_{p,q}$ and $\S_{p,q}$ may be only expressible in terms of (alternating) zeta values and double zeta values (see Eqs. (\ref{Tpq.MtV.MZV}) and (\ref{Spq.MTV.MZV})), but Theorem \ref{Th.sym.TS} asserts that the symmetric sums
\[
(-1)^{m-1}\la_p(T_{m,q})+(-1)^{p-1}\la_m(\S_{p,q})
\]
are still reducible to $\ln(2)$ and zeta values. In Sections \ref{Sec.TS.m1.p1} -- \ref{Sec.TSqeq}, we present several special cases and related examples of this theorem.


\subsection{The case of $m=p=1$}\label{Sec.TS.m1.p1}

When $m=p=1$, Theorem \ref{Th.sym.TS} reduces to the next result.

\begin{corollary}\label{Coro.TS1q}
For integer $q\geq2$, the sums $T_{1,q}+\S_{1,q}$ are reducible to $\ln(2)$ and zeta values:
\begin{equation}\label{TS1q}
T_{1,q}+\S_{1,q}=\sum_{n=1}^{\infty}\frac{h_{n-1}}{(n-1/2)^q}
    +\sum_{n=1}^{\infty}\frac{h_n}{n^q}
    =2\ln(2)\t(q)-\sum_{j=1}^{q-2}\ze(q-j)\t(j+1)\,.
\end{equation}
\end{corollary}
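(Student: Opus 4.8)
The plan is to specialize Theorem~\ref{Th.sym.TS} to $m=p=1$ and then unwind the transformation operators $\la_p$ and $\la_m$, which collapse to identities since $\la_1(\vOm_{p,q})=\vOm_{p,q}$. With $m=p=1$, the left-hand side of \eqref{sym.TS.expfor} is simply $T_{1,q}+\S_{1,q}$, because the sign factors $(-1)^{m-1}$ and $(-1)^{p-1}$ are both $+1$ and each inner sum over $i+j=0$ has the single term $i=j=0$ with binomial coefficients equal to $1$. So the whole content of the corollary is to evaluate the right-hand side of \eqref{sym.TS.expfor} in this case.

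First I would write out the three terms on the right of \eqref{sym.TS.expfor} with $m=p=1$. The first term is $(-1)^1\la_1\bigl((-1)^1\t(1)\t(q)\bigr)=\la_1(\t(1)\t(q))=\t(1)\t(q)$, which vanishes by the convention $\t(1):=0$. The second term is $(-1)^1\la_1\bigl((-1)^1\t(1)\ze(q)\bigr)=\t(1)\ze(q)=0$ for the same reason. Hence only the third term $-\la_q(\ze(1)\t(1))$ survives — but one must be careful here: the operator is $\la_q$ acting on the two-parameter object $\ze(m)\t(p)=\ze(1)\t(1)$, i.e.\ on $\vOm_{1,1}$ with $\vOm_{a,b}:=\ze(a)\t(b)$. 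By the definition of $\la_q$, $\la_q(\vOm_{1,1})=\sum_{i+j=q-1,\;i,j\ge0}\binom{i}{i}\binom{j}{j}\vOm_{1+i,1+j}=\sum_{i+j=q-1}\ze(1+i)\,\t(1+j)$. So the right-hand side equals $-\sum_{i+j=q-1,\,i,j\ge0}\ze(i+1)\t(j+1)$.

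Next I would split off the boundary terms of this sum where one of the factors involves the index $1$. The term $i=q-1,\ j=0$ contributes $-\ze(q)\t(1)=0$. The term $i=0,\ j=q-1$ contributes $-\ze(1)\t(q)=-(-2\ln(2))\t(q)=2\ln(2)\t(q)$, using the convention $\ze(1):=-2\ln(2)$. The remaining terms have $i\ge1$ and $j\ge1$; reindexing $j\mapsto j$ with $j$ running from $1$ to $q-2$ and $i=q-1-j\ge1$, they contribute $-\sum_{j=1}^{q-2}\ze(q-j)\,\t(j+1)$. Assembling these pieces gives exactly $T_{1,q}+\S_{1,q}=2\ln(2)\t(q)-\sum_{j=1}^{q-2}\ze(q-j)\t(j+1)$, which is \eqref{TS1q}. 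Finally I would note that the first displayed equality in \eqref{TS1q}, namely $T_{1,q}+\S_{1,q}=\sum_{n\ge1}h_{n-1}/(n-1/2)^q+\sum_{n\ge1}h_n/n^q$, is just the definition of $T_{p,q}$ and $\S_{p,q}$ with $p=1$.

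There is essentially no obstacle here: the corollary is a direct substitution into Theorem~\ref{Th.sym.TS}. The only point requiring care — and the one I would state explicitly in the write-up — is the correct handling of the degenerate arguments $\ze(1)$ and $\t(1)$ under the stated conventions, and making sure the operator $\la_q$ in the third term is expanded before those conventions are applied, since $\la_q$ produces terms $\ze(i+1)\t(j+1)$ of which only the $i=0$ one is genuinely the constant $-2\ln(2)$. Once that bookkeeping is done, the identity \eqref{TS1q} follows immediately.
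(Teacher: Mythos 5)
Your proposal is correct and follows exactly the paper's route: the corollary is obtained by setting $m=p=1$ in Theorem~\ref{Th.sym.TS}, and your careful expansion of $-\la_q(\ze(1)\t(1))$ into $-\sum_{i+j=q-1}\ze(i+1)\t(j+1)$ before applying the conventions $\ze(1):=-2\ln(2)$, $\t(1):=0$ is precisely the intended bookkeeping, matching \eqref{TS1q} (and the listed examples for $q=2,3$). Nothing is missing.
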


\begin{example}
In Theorem \ref{Th.sym.TS}, replacing $(m,p,q)$ by $(1,1,2)$ -- $(1,1,5)$ yields
\begin{align*}
&T_{1,2}+\S_{1,2}=\pi^2\ln(2)\,,\\
&T_{1,3}+\S_{1,3}=14\ln(2)\ze(3)-\tf{1}{12}\pi^4\,,\\
&T_{1,4}+\S_{1,4}=-\tf{5}{3}\pi^2\ze(3)+\tf{1}{3}\pi^4\ln(2)\,,\\
&T_{1,5}+\S_{1,5}=62\ln(2)\ze(5)-7\ze(3)^2-\tf{1}{30}\pi^6\,,
\end{align*}
respectively, which correspond to the cases of $q=2,3,4,5$ of Corollary \ref{Coro.TS1q}.
Note that the two ones corresponding to $q=3,5$ are of even weights, and the evaluations of the four involved linear $T$-sums and $\S$-sums contain the polylogarithm $\Lii_4(\tf{1}{2})$ and alternating double zeta values $\ze(\bar{5},1)$:
\begin{align*}
&T_{1,3}=-16\Lii_4(\tf{1}{2})-\tf{2}{3}\ln(2)^4+\tf{2}{3}\pi^2\ln(2)^2+\tf{23}{360}\pi^4\,,\\
&T_{1,5}=-32\ze(\bar{5},1)+62\ln(2)\ze(5)+\tf{17}{2}\ze(3)^2-\tf{73}{1260}\pi^6\,,\\
&\S_{1,3}=16\Lii_4(\tf{1}{2})+14\ln(2)\ze(3)
    +\tf{2}{3}\ln(2)^4-\tf{2}{3}\pi^2\ln(2)^2-\tf{53}{360}\pi^4\,,\\
&\S_{1,5}=32\ze(\bar{5},1)-\tf{31}{2}\ze(3)^2+\tf{31}{1260}\pi^6\,.
\end{align*}
These four linear sums have been computed in \cite[Examples 3.5 and 3.8]{WangXu20.DTE}.\hfill\qedsymbol
\end{example}


\subsection{The case of $m=q$ and $p$ odd}

When $m=q\geq 2$ and $p$ is odd, Theorem \ref{Th.sym.TS} reduces to
\begin{align*}
&(-1)^{q-1}\la_p(T_{q,q})+\la_q(\S_{p,q})\\
&\quad=(-1)^q\la_p((-1)^q\t(q)\t(q))+(-1)^p\la_q((-1)^p\t(p)\ze(q))-\la_q(\ze(q)\t(p))\,,
\end{align*}
where the last two terms on the right can be combined into one, as follows:
\begin{align*}
&(-1)^p\la_q((-1)^p\t(p)\ze(q))-\la_q(\ze(q)\t(p))\\
&\quad=\sum_{\substack{i+j=q-1\\i,j\geq 0}}
    \binom{p+i-1}{i}\binom{q+j-1}{j}((-1)^i-1)\t(p+i)\ze(q+j)\\
&\quad=-2\sum_{\substack{i=1\\i\ \text{even}}}^q
    \binom{p+i-2}{p-1}\binom{2q-i-1}{q-i}\t(p+i-1)\ze(2q-i)\,.
\end{align*}
Thus, we obtain
\begin{align}
&(-1)^{q-1}\la_p(T_{q,q})+\la_q(\S_{p,q})\nonumber\\
&\quad=(-1)^q\la_p((-1)^q\t(q)\t(q))
    -2\sum_{k=1}^{[\frac{q}{2}]}\binom{p-2+2k}{p-1}\binom{2q-1-2k}{q-2k}
    \t(p-1+2k)\ze(2q-2k)\,,\label{TS.qeq}
\end{align}
for integer $q\geq2$ and odd integer $p\geq 1$. The further special cases of $p=1,3,5$  of the above identity are of particular interest.

\begin{corollary}\label{Coro.TSq1q}
For integer $q\geq2$, we have
\begin{align*}
(-1)^{q-1}T_{q,q}+\la_q(\S_{1,q})
&=(-1)^{q-1}T_{q,q}+\sum_{j=0}^{q-1}\binom{q+j-1}{j}\S_{q-j,q+j}\\
&=\t(q)^2-2\sum_{k=1}^{[\frac{q}{2}]}
    \binom{2q-1-2k}{q-2k}\t(2k)\ze(2q-2k)\,.
\end{align*}
Therefore, the sums $(-1)^{q-1}T_{q,q}+\la_q(\S_{1,q})$ reduce to rational combinations of $\ze(q)^2$ and $\pi^{2q}$ if $q$ is odd, and to rational multiples of $\pi^{2q}$ if $q$ is even.
\end{corollary}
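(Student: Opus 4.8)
The corollary is the specialization $p=1$ of formula \eqref{TS.qeq}, so the plan is first to carry out that substitution and then to evaluate the resulting even zeta values.

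First I would set $p=1$ in \eqref{TS.qeq}. Since $\la_1$ is the identity operator, $\la_1(T_{q,q})=T_{q,q}$ and $\la_1\big((-1)^q\t(q)\t(q)\big)=(-1)^q\t(q)\t(q)$, so the leading term $(-1)^q\la_p\big((-1)^q\t(q)\t(q)\big)$ collapses to $(-1)^q\cdot(-1)^q\t(q)\t(q)=\t(q)^2$. In the remaining sum the binomial factor $\binom{p-2+2k}{p-1}$ becomes $\binom{2k-1}{0}=1$ and $\t(p-1+2k)$ becomes $\t(2k)$, which yields the first claimed expression $\t(q)^2-2\sum_{k=1}^{[q/2]}\binom{2q-1-2k}{q-2k}\t(2k)\ze(2q-2k)$ for the right-hand side. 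On the left-hand side I would expand $\la_q(\S_{1,q})$ directly from the definition of $\la_q$: using $\binom{1+i-1}{i}=\binom{i}{i}=1$ and the substitution $i=q-1-j$ (so $1+i=q-j$), this sum equals $\sum_{j=0}^{q-1}\binom{q+j-1}{j}\S_{q-j,q+j}$, the second form in the statement. This settles the displayed identity.

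Next I would prove the reducibility assertion using Euler's formula $\ze(2\ell)\in\mathbb{Q}\,\pi^{2\ell}$ for $\ell\geq1$ together with $\t(s)=(2^s-1)\ze(s)$ for integer $s\geq2$. The crucial bookkeeping step is to check the index ranges: every zeta/$\t$-argument occurring on the right is of the form $2k$ or $2q-2k$ with $1\leq k\leq[q/2]$; both are even, and $2q-2k\geq 2q-2[q/2]\geq q\geq2$, so all arguments are genuine values $\geq2$ and the conventions $\ze(1)=-2\ln(2)$, $\t(1)=0$ are never triggered — this is why no $\ln(2)$ appears. Consequently each product $\t(2k)\ze(2q-2k)$ lies in $\mathbb{Q}\,\pi^{2k}\cdot\mathbb{Q}\,\pi^{2q-2k}=\mathbb{Q}\,\pi^{2q}$, so the whole sum lies in $\mathbb{Q}\,\pi^{2q}$. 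Finally, for the term $\t(q)^2=(2^q-1)^2\ze(q)^2$: when $q$ is even, $\ze(q)\in\mathbb{Q}\,\pi^q$ forces $\ze(q)^2\in\mathbb{Q}\,\pi^{2q}$, so the entire right-hand side is a rational multiple of $\pi^{2q}$; when $q$ is odd, $\ze(q)^2$ is retained, and the right-hand side is a rational combination of $\ze(q)^2$ and $\pi^{2q}$.

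I do not expect a genuine obstacle: all the analytic work — the choice of kernel, the three residue evaluations, and the derivation of \eqref{TS.qeq} — is already done in Theorem \ref{Th.sym.TS}, and what remains is a routine substitution followed by Euler's evaluation of even zeta values. The only point requiring a little attention is the index-range verification above, which is what guarantees both that Euler's formula applies to every term and that the $\ln(2)$ contributions are genuinely absent.
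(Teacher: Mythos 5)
Your proposal is correct and follows exactly the paper's route: the displayed identity is the $p=1$ specialization of Eq.~(\ref{TS.qeq}) (with $\la_1$ the identity, $\binom{2k-1}{0}=1$, and the index shift giving the second form of $\la_q(\S_{1,q})$), and the reducibility assertion is obtained, as in the paper, from $\t(2k)\ze(2q-2k)=(2^{2k}-1)\ze(2k)\ze(2q-2k)\in\mathbb{Q}\,\pi^{2q}$ together with Euler's evaluation of even zeta values. Your explicit check that all arguments are $\geq 2$, so the conventions $\ze(1)=-2\ln 2$, $\t(1)=0$ never enter, is a detail the paper leaves implicit but is entirely consistent with its argument.
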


\begin{proof}
The final assertion arises from the fact that
\[
\t(2k)\ze(2q-2k)=(2^{2k}-1)\ze(2k)\ze(2q-2k)
\]
are rational multiples of $\pi^{2q}$.
\end{proof}

\begin{example}
In Theorem \ref{Th.sym.TS}, replacing $(m,p,q)$ by $(2,1,2)$ -- $(6,1,6)$ yields
\begin{align*}
&T_{2,2}-2\S_{1,3}-\S_{2,2}=-\tf{1}{12}\pi^4\,,\\
&T_{3,3}+6\S_{1,5}+3\S_{2,4}+\S_{3,3}=49\ze(3)^2-\tf{1}{30}\pi^6\,,\\
&T_{4,4}-20\S_{1,7}-10\S_{2,6}-4\S_{3,5}-\S_{4,4}=-\tf{17}{1260}\pi^8\,,\\
&T_{5,5}+70\S_{1,9}+35\S_{2,8}+15\S_{3,7}+5\S_{4,6}+\S_{5,5}
    =961\ze(5)^2-\tf{31}{5670}\pi^{10}\,,\\
&T_{6,6}-252\S_{1,11}-126\S_{2,10}-56\S_{3,9}-21\S_{4,8}-6\S_{5,7}-\S_{6,6}
    =-\tf{691}{311850}\pi^{12}\,,
\end{align*}
respectively. These correspond to the case of $q=2,3,4,5,6$ of Corollary \ref{Coro.TSq1q}.\hfill\qedsymbol
\end{example}

Similarly, when $m=q\geq 2$ and $p=3,5$, Theorem \ref{Th.sym.TS} reduces to the next two results.

\begin{corollary}\label{Coro.TSq3q}
For integer $q\geq2$, we have
\begin{align*}
(-1)^{q-1}\la_3(T_{q,q})+\la_q(\S_{3,q})
&=q(q+1)\t(q)\t(q+2)-q^2\t(q+1)^2\\
&\quad-2\sum_{k=1}^{[\frac{q}{2}]}
    \binom{2k+1}{2}\binom{2q-1-2k}{q-2k}\t(2k+2)\ze(2q-2k)\,.
\end{align*}
Therefore, the sums $(-1)^{q-1}\la_3(T_{q,q})+\la_q(\S_{3,q})$ are reducible to rational combinations of $\ze(q)\ze(q+2)$ and $\pi^{2q+2}$ if $q$ is odd, and to rational combinations of $\ze(q+1)^2$ and $\pi^{2q+2}$ if $q$ is even.
\end{corollary}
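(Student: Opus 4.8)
The plan is to specialize Theorem~\ref{Th.sym.TS} (and more conveniently its already-derived consequence~\eqref{TS.qeq}) to $m=q$ and $p=3$, and then massage the right-hand side into the claimed closed form. First I would set $p=3$ in~\eqref{TS.qeq}. The left-hand side becomes exactly $(-1)^{q-1}\la_3(T_{q,q})+\la_q(\S_{3,q})$, so no work is needed there. On the right-hand side, the second term is the sum $-2\sum_{k=1}^{[q/2]}\binom{2k+1}{2}\binom{2q-1-2k}{q-2k}\t(2k+1)\ze(2q-2k)$ --- wait, with $p=3$ the factor $\binom{p-2+2k}{p-1}=\binom{2k+1}{2}$ and the $\t$-argument is $\t(p-1+2k)=\t(2k+2)$, giving precisely the displayed sum $-2\sum_{k=1}^{[q/2]}\binom{2k+1}{2}\binom{2q-1-2k}{q-2k}\t(2k+2)\ze(2q-2k)$. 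So the only genuine computation is the first term $(-1)^q\la_3((-1)^q\t(q)\t(q))$, which I must show equals $q(q+1)\t(q)\t(q+2)-q^2\t(q+1)^2$.

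The key step is unwinding the operator $\la_3$. By the definition of $\la_m$ with $m=3$ applied to a symbol $\vOm_{p,q}$ carrying two indices both equal to $q$, we have $\la_3(\vOm_{q,q})=\sum_{i+j=2,\,i,j\ge0}\binom{q+i-1}{i}\binom{q+j-1}{j}\vOm_{q+i,q+j}$, i.e. the three terms $(i,j)\in\{(2,0),(1,1),(0,2)\}$. Applied to $(-1)^q\t(q)\t(q)$, reading the first index as the first $\t$-argument and the second index as the second $\t$-argument, this gives
\[
(-1)^q\la_3((-1)^q\t(q)\t(q))
=\binom{q+1}{2}\t(q+2)\t(q)+q^2\,\t(q+1)\t(q+1)+\binom{q+1}{2}\t(q)\t(q+2),
\]
since $\binom{q-1}{0}=1$, $\binom{q}{1}=q$, $\binom{q+1}{2}=\tfrac{q(q+1)}{2}$, and the prefactor $(-1)^q\cdot(-1)^q=1$. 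Combining the two outer terms yields $2\binom{q+1}{2}\t(q)\t(q+2)=q(q+1)\t(q)\t(q+2)$, and the middle term is $q^2\t(q+1)^2$. I must be careful about the sign: in the statement it appears as $-q^2\t(q+1)^2$, so I should double-check against the general formula whether the middle binomial coefficient enters with a sign $(-1)^i$ or whether the intended reading of $\la_3((-1)^q\t(q)\t(q))$ distributes the $(-1)^q$ only onto one factor; examining the parallel simplification that produced the $((-1)^i-1)$ factor in the lines just before~\eqref{TS.qeq} should pin this down, and I expect the resolution is that $\la_3$ acts on the index $q$ of a single $\t(q)$ while $(-1)^q$ tracks that same index, producing $(-1)^{q+i}$ inside, hence the $i=1$ term carries $(-1)^{q+1}\cdot(-1)^q=-1$.

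After assembling these pieces, the right-hand side reads $q(q+1)\t(q)\t(q+2)-q^2\t(q+1)^2-2\sum_{k=1}^{[q/2]}\binom{2k+1}{2}\binom{2q-1-2k}{q-2k}\t(2k+2)\ze(2q-2k)$, which is exactly the asserted identity. For the final reducibility claim, I would invoke $\t(2k+2)\ze(2q-2k)=(2^{2k+2}-1)\ze(2k+2)\ze(2q-2k)\in\mathbb{Q}\pi^{2q+2}$ for each $k$ in the sum (both arguments even, summing to $2q+2$), so the entire sum is a rational multiple of $\pi^{2q+2}$; then when $q$ is odd, $\t(q)$ and $\t(q+2)$ are not proportional to powers of $\pi$ but $\t(q+1)^2=(2^{q+1}-1)^2\ze(q+1)^2$ with $q+1$ even is a rational multiple of $\pi^{2q+2}$, leaving $\ze(q)\ze(q+2)$ (equivalently $\t(q)\t(q+2)$) and $\pi^{2q+2}$ as the two building blocks; when $q$ is even the roles swap, with $\t(q)\t(q+2)$ rational over $\pi^{2q+2}$ and $\ze(q+1)^2$ the genuine new constant. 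The main obstacle --- really the only subtle point --- is getting the sign and the index bookkeeping of $\la_3$ acting on the product $(-1)^q\t(q)\t(q)$ exactly right; everything else is direct substitution into~\eqref{TS.qeq}.
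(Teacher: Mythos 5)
Your proposal is correct and follows essentially the same route as the paper: specialize \eqref{TS.qeq} to $p=3$ and expand $(-1)^q\la_3((-1)^q\t(q)\t(q))$ term by term, with the sign convention you settle on (the factor $(-1)^{q+i}$ tracking the shifted first index, so the $(i,j)=(1,1)$ term carries $(-1)^i=-1$) being exactly the paper's reading, as confirmed by the expansion of $\vOm_1$ in the proof of Theorem \ref{Th.TS.qeq.coeff}. Your parity argument for the reducibility statement ($\t(2k+2)\ze(2q-2k)\in\mathbb{Q}\pi^{2q+2}$, with $\t(q+1)^2$ or $\t(q)\t(q+2)$ absorbed into $\mathbb{Q}\pi^{2q+2}$ according to the parity of $q$) likewise matches the paper's reasoning for Corollary \ref{Coro.TSq1q}.
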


\begin{corollary}\label{Coro.TSq5q}
For integer $q\geq2$, we have
\begin{align*}
&(-1)^{q-1}\la_5(T_{q,q})+\la_q(\S_{5,q})\\
&\quad=2\binom{q+3}{4}\t(q)\t(q+4)-2q\binom{q+2}{3}\t(q+1)\t(q+3)
    +\binom{q+1}{2}^2\t(q+2)^2\\
&\quad\quad-2\sum_{k=1}^{[\frac{q}{2}]}
    \binom{2k+3}{4}\binom{2q-1-2k}{q-2k}\t(2k+4)\ze(2q-2k)\,.
\end{align*}
Therefore, the sums $(-1)^{q-1}\la_5(T_{q,q})+\la_q(\S_{5,q})$ are reducible to rational combinations of $\ze(q)\ze(q+4)$, $\ze(q+2)^2$ and $\pi^{2q+4}$ if $q$ is odd, and to rational combinations of $\ze(q+1)\ze(q+3)$ and $\pi^{2q+4}$ if $q$ is even.
\end{corollary}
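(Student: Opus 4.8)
The plan is to read this corollary off from Theorem \ref{Th.sym.TS}, or more conveniently from its already-simplified form (\ref{TS.qeq}), which holds for every integer $q\geq2$ and every odd integer $p\geq1$. Putting $p=5$ in (\ref{TS.qeq}), the sum over $k$ on the right becomes exactly $-2\sum_{k=1}^{[\frac{q}{2}]}\binom{2k+3}{4}\binom{2q-1-2k}{q-2k}\t(2k+4)\ze(2q-2k)$, the last line of the asserted identity, so the whole task reduces to expanding the single remaining term $(-1)^q\la_5\smbb{(-1)^q\t(q)\t(q)}$ into the first displayed line.

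By the definition of the transformation operator $\la_m$,
\[
(-1)^q\la_5\smbb{(-1)^q\t(q)\t(q)}
    =\sum_{\substack{i+j=4\\i,j\geq0}}(-1)^i\binom{q+i-1}{i}\binom{q+j-1}{j}\t(q+i)\t(q+j)\,.
\]
I would write out the five summands $i=0,1,\dots,4$ and group $i=0$ with $i=4$ and $i=1$ with $i=3$ (the summand is symmetric under $i\leftrightarrow j$ and $(-1)^i=(-1)^{4-i}$), leaving $i=2$ by itself. Using $\binom{q-1}{0}=1$, $\binom{q}{1}=q$ and $\binom{q+j-1}{j}=\binom{q+3}{4},\binom{q+2}{3},\binom{q+1}{2}$ for $j=4,3,2$, the three groups collapse to $2\binom{q+3}{4}\t(q)\t(q+4)$, $-2q\binom{q+2}{3}\t(q+1)\t(q+3)$ and $\binom{q+1}{2}^2\t(q+2)^2$, which is precisely the first line of the statement; the small cases $q=2,3$ (where $[\frac{q}{2}]=1$) need only a glance, as every binomial coefficient stays legal.

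For the final assertion I would argue by weight and parity. Each product on the right has weight $2q+4$ and each of its factors is a single $\t$- or $\ze$-value, so after substituting $\t(s)=(2^s-1)\ze(s)$ — legitimate because every argument is $\geq2$ and no $\ln(2)$-convention intervenes — the right-hand side lies in the $\mathbb{Q}$-span of products $\ze(a)\ze(b)$ with $a+b=2q+4$. If $q$ is odd, the odd ones among $q,q+1,q+2,q+3,q+4$ are $q,q+2,q+4$, while the arguments $2k+4$ and $2q-2k$ in the $k$-sum are both even; since $\ze$ at an even argument is a rational multiple of the corresponding power of $\pi$, the term $\t(q+1)\t(q+3)$ and the entire $k$-sum reduce to rational multiples of $\pi^{2q+4}$, and the only transcendental combinations that survive are $\ze(q)\ze(q+4)$ and $\ze(q+2)^2$. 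If $q$ is even the parities flip, so $\t(q)\t(q+4)$, $\t(q+2)^2$ and the $k$-sum become rational multiples of $\pi^{2q+4}$ and only $\ze(q+1)\ze(q+3)$ remains, giving the second form. There is no genuine obstacle in any of this: the corollary is a pure specialization of Theorem \ref{Th.sym.TS}, and the only points demanding care are the combinatorial bookkeeping that produces the coefficients $2\binom{q+3}{4}$, $2q\binom{q+2}{3}$, $\binom{q+1}{2}^2$ correctly and the parity count that decides which products of zeta values collapse to powers of $\pi$.
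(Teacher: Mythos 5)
Your proposal is correct and follows essentially the same route as the paper, which obtains Corollary \ref{Coro.TSq5q} precisely by specializing Eq. (\ref{TS.qeq}) to $p=5$, expanding $(-1)^q\la_5((-1)^q\t(q)\t(q))$ into the three symmetric products with coefficients $2\binom{q+3}{4}$, $2q\binom{q+2}{3}$, $\binom{q+1}{2}^2$, and then reading off the reducibility from the parity of the arguments via $\t(s)=(2^s-1)\ze(s)$ and the evaluation of $\ze$ at even integers. Your bookkeeping of the five summands and the parity analysis for the final assertion are both accurate, matching the argument the paper gives explicitly for the $p=1$ case (Corollary \ref{Coro.TSq1q}) and leaves implicit for $p=3,5$.
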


\begin{example}
In Theorem \ref{Th.sym.TS}, replacing $(m,p,q)$ by $(2,3,2)$, $(3,3,3)$, $(4,3,4)$ gives
\begin{align*}
&3T_{2,4}+4T_{3,3}+3T_{4,2}-2\S_{3,3}-3\S_{4,2}=196\ze(3)^2-\tf{1}{3}\pi^6\,,\\
&2T_{3,5}+3T_{4,4}+2T_{5,3}+2\S_{3,5}+3\S_{4,4}+2\S_{5,3}
    =868\ze(3)\ze(5)-\tf{17}{180}\pi^8\,,\\
&5T_{4,6}+8T_{5,5}+5T_{6,4}-10\S_{3,7}-15\S_{4,6}-12\S_{5,5}-5\S_{6,4}
    =7688\ze(5)^2-\tf{31}{315}\pi^{10}\,,
\end{align*}
respectively. Replacing $(m,p,q)$ by $(2,5,2)$, $(3,5,3)$ yields
\begin{align*}
&5T_{2,6}+8T_{3,5}+9T_{4,4}+8T_{5,3}+5T_{6,2}-2\S_{5,3}-5\S_{6,2}
    =3472\ze(3)\ze(5)-\tf{17}{36}\pi^8\,,\\
&5T_{3,7}+10T_{4,6}+12T_{5,5}+10T_{6,4}+5T_{7,3}+2\S_{5,5}+5\S_{6,4}+5\S_{7,3}\\
&\quad=8890\ze(3)\ze(7)+11532\ze(5)^2-\tf{31}{135}\pi^{10}\,,
\end{align*}
respectively. These can also be obtained directly from Corollaries \ref{Coro.TSq3q} and \ref{Coro.TSq5q}.\hfill\qedsymbol
\end{example}


\subsection{More discussions on the case of $m=q$ and $p$ odd}\label{Sec.m.TSqod}

It will be interesting to give an explicit characterization for the coefficients of various mathematical constants involved in the reduction of the sums $(-1)^{q-1}\la_p(T_{q,q})+\la_q(\S_{p,q})$ appeared in (\ref{TS.qeq}) in the last section.

To do this, let $B_{n}$ be the well-known \emph{Bernoulli numbers} and $G_n$ be the \emph{Genocchi numbers}, defined by
\[
\frac{t}{\ue^t-1}=\sum_{n=0}^{\infty}B_n\frac{t^n}{n!}
\quad\text{and}\quad
\frac{2t}{\ue^t+1}=\sum_{n=1}^{\infty}G_n\frac{t^n}{n!}\,,
\]
respectively (see, for example, \cite[Section 1.14]{Com74}). Then $G_n=2(1-2^n)B_n$ for $n\geq 0$, and $B_{2k+1}=G_{2k+1}=0$ for $k\geq 1$. As mentioned in Section \ref{Sec.intro}, the convolution identity (\ref{id.BG.GG}) on the Bernoulli numbers and Genocchi numbers can be established, which helps us obtain the next theorem. Note that the proof of identity (\ref{id.BG.GG}) will be given in the last section (i.e., Section \ref{Sec.con.id}) of the present paper.

\begin{theorem}\label{Th.TS.qeq.coeff}
For integer $q\geq2$ and odd integer $p\geq 1$, the following explicit expression holds:
\begin{align}
&(-1)^{q-1}\la_p(T_{q,q})+\la_q(\S_{p,q})\nonumber\\
&\quad=(-1)^{q-1}\sum_{\substack{i=0\\q+i\ \mathrm{odd}}}^{p-1}
    \binom{q+i-1}{i}\binom{p+q-2-i}{p-1-i}\t(q+i)\t(p+q-1-i)\nonumber\\
&\quad\quad+\frac{(-1)^{\frac{p-1}{2}}}{8}\binom{p-2+2q}{p-1}
        \frac{G_{p-1+2q}}{(p-1+2q)!}(2\pi)^{p-1+2q}\,.\label{TS.qeq.coeff}
\end{align}
\end{theorem}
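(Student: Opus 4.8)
The plan is to take the identity \eqref{TS.qeq} from the previous section as the starting point and to evaluate each piece on its right-hand side completely explicitly, identifying the coefficient of every constant. Recall that \eqref{TS.qeq} reads
\[
(-1)^{q-1}\la_p(T_{q,q})+\la_q(\S_{p,q})
    =(-1)^q\la_p\bigl((-1)^q\t(q)\t(q)\bigr)
    -2\sum_{k=1}^{[q/2]}\binom{p-2+2k}{p-1}\binom{2q-1-2k}{q-2k}\t(p-1+2k)\ze(2q-2k)\,.
\]
The first term expands, by the definition of $\la_p$, into a sum $\sum_{i+j=p-1}\binom{q+i-1}{i}\binom{q+j-1}{j}\t(q+i)\t(q+j)$; since $\t(s)=(2^s-1)\ze(s)$ for $s\ge 2$, any summand with $q+i$ and $q+j$ both even is a rational multiple of $\pi^{p-1+2q}$, while the summands with $q+i$ odd (equivalently $q+j$ odd) survive as genuine products of zeta values and contribute the first displayed sum in \eqref{TS.qeq.coeff}. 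So the combinatorial content is: \emph{all} the terms in \eqref{TS.qeq} that are not of the form (odd-weight zeta)$\times$(odd-weight zeta) collapse into a single rational multiple of $\pi^{p-1+2q}$, and that rational number is exactly $\tfrac{(-1)^{(p-1)/2}}{8}\binom{p-2+2q}{p-1}\tfrac{G_{p-1+2q}}{(p-1+2q)!}(2\pi)^{p-1+2q}/\pi^{p-1+2q}$.

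First I would isolate the $\pi$-power part. Write $N=p-1+2q$ (an odd shift of an even number, so $N$ is odd when $p$ is odd — wait, $p$ odd gives $p-1$ even, so $N$ is even, matching the even-index Genocchi/Bernoulli numbers appearing). Using $\ze(2m)=(-1)^{m-1}\tfrac{(2\pi)^{2m}B_{2m}}{2(2m)!}$ and $\t(2m)=(2^{2m}-1)\ze(2m)=-\tfrac{G_{2m}}{2}\tfrac{(2\pi)^{2m}}{2(2m)!}\cdot(-1)^{?}$ — more cleanly, $\t(2m)=-\tfrac{(2\pi)^{2m}G_{2m}}{2(2m)!}(-1)^{m}$ up to a sign I would pin down — I can convert every purely-even contribution to a multiple of $(2\pi)^N$. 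The even-index part of $\la_p\bigl(\t(q)\t(q)\bigr)$ becomes, after dividing out $(2\pi)^N/(2(q+i)!\cdot 2(q+j)!)$-type factors and reindexing $q+i=q+t$, $q+j=N-(q+t)$, a convolution of the shape $\sum_t \binom{\cdots}{\cdots}\binom{\cdots}{\cdots}G_{q+t}G_{N-q-t}/\bigl((q+t)(N-q-t)\bigr)$; similarly the sum $\sum_k\binom{p-2+2k}{p-1}\binom{2q-1-2k}{q-2k}\t(p-1+2k)\ze(2q-2k)$ has weight $p-1+2k+2q-2k=N$ throughout and becomes a second such $GB$-type convolution. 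The claim \eqref{TS.qeq.coeff} is then equivalent to a purely numerical identity among these two convolution sums and a single Genocchi number $G_{N}=G_{p-1+2q}$ — and \emph{this} is precisely what the convolution identity \eqref{id.BG.GG} is engineered to supply. Concretely I would match the free parameter: set $2n+2q=N=p-1+2q$, i.e. $2n=p-1$, so $n=(p-1)/2$, and verify that the left side of \eqref{id.BG.GG} (a $BG$-convolution) corresponds to the $\t(q)\t(q)$-part while the two terms on the right of \eqref{id.BG.GG} (a $GG$-convolution plus the isolated Genocchi term $\tfrac{(-1)^q}{q\binom{2q}{q}}\tfrac{G_{2n+2q}}{2n+2q}$) correspond respectively to the $\t\ze$-sum over $k$ and to the surviving single term $\binom{p-2+2q}{p-1}\tfrac{G_{p-1+2q}}{(p-1+2q)!}(2\pi)^{p-1+2q}$ after restoring the analytic normalizations.

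The routine but care-demanding steps are the bookkeeping of signs and binomial rearrangements: turning $\binom{2k+1}{2}$-type inner coefficients into the $\binom{2n}{i}$ and $\binom{2q-1-i}{2q-i}$ shapes of \eqref{id.BG.GG}, tracking the alternating signs $(-1)^i$ coming from both $(-1)^{m-1}$-type prefactors and from $\ze(2m)=(-1)^{m-1}\cdots$, and checking the $p=1$ degenerate case (where the $\la_p$ is trivial, the first sum in \eqref{TS.qeq.coeff} is empty, and the claim reduces to Corollary~\ref{Coro.TSq1q} with $\ze(q)^2$ reabsorbed — consistent since for $q$ even $\ze(q)^2\in\mathbb{Q}\pi^{2q}$, and for $q$ odd the surviving $\t(q)^2=(2^q-1)^2\ze(q)^2$ term is exactly the single $i=0$ summand of the first sum). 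The genuine obstacle — and the reason the authors defer it to Section~\ref{Sec.con.id} — is proving the convolution identity \eqref{id.BG.GG} itself; granting that, the proof of Theorem~\ref{Th.TS.qeq.coeff} is a substitution-and-simplification argument. Hence my plan is: (i) expand \eqref{TS.qeq} and peel off the genuine zeta-product terms, which are exactly the first sum of \eqref{TS.qeq.coeff}; (ii) convert the remaining even-weight terms to multiples of $(2\pi)^{p-1+2q}$ via the Euler formulas for $\ze(2m)$ and $\t(2m)$; (iii) recognize the resulting numerical identity as the specialization $n=(p-1)/2$ of \eqref{id.BG.GG} (rewritten via $G_n=2(1-2^n)B_n$), thereby collapsing everything to the stated single Genocchi-number term with coefficient $\tfrac{(-1)^{(p-1)/2}}{8}\binom{p-2+2q}{p-1}$.
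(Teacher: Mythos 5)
Your plan is essentially the paper's own proof: start from \eqref{TS.qeq}, keep the summands of $\la_p\bigl(\t(q)\t(q)\bigr)$ with $q+i$ odd as the genuine zeta-product part, convert every even-index factor to Bernoulli/Genocchi data via Euler's formula, and invoke \eqref{id.BG.GG} at $2n=p-1$ to collapse the remainder into the single Genocchi term. The one correction needed is that your matching of the pieces of \eqref{id.BG.GG} with the pieces of \eqref{TS.qeq} is reversed: after writing $\t(p-1+2k)=(2^{p-1+2k}-1)\ze(p-1+2k)$ and converting, the $\t\ze$-sum over $k$ in \eqref{TS.qeq} becomes the $B$--$G$ convolution on the \emph{left} of \eqref{id.BG.GG} (its binomial $\binom{q-1}{i}$ and its $[q/2]$ nonzero terms line up with $k=1,\dots,[q/2]$), while the $G$--$G$ convolution on the right, carrying $\binom{2n}{i}=\binom{p-1}{i}$, corresponds to the even-index (that is, $q+i$ even) part of $\la_p\bigl(\t(q)\t(q)\bigr)$; in the paper these even-index products then cancel in $\vOm_1+\vOm_2$, leaving exactly the odd-index products plus the isolated term $\frac{(-1)^q}{q\binom{2q}{q}}\frac{G_{p-1+2q}}{p-1+2q}$, which after restoring the $(2\pi)^{p-1+2q}$ normalization yields the coefficient $\frac{(-1)^{(p-1)/2}}{8}\binom{p-2+2q}{p-1}$. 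Since the binomial shapes force this correspondence once you actually carry out the conversion, the swap is a slip in the sketch rather than a flaw in the method; with it fixed, your substitution-and-simplification argument is the one the paper gives.
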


\begin{proof}
By appealing to the values of the Riemann zeta function at even positive integers:
\[
\ze(2k)=\frac{(-1)^{k+1}B_{2k}(2\pi)^{2k}}{2\cdot(2k)!}\,,
\]
the second term on the right of Eq. (\ref{TS.qeq}), abbreviated as $\vOm_2$, can be rewritten as
\begin{align}
\vOm_2
&=-2\sum_{k=1}^{[\frac{q}{2}]}\binom{p-2+2k}{p-1}\binom{2q-1-2k}{q-2k}
    (2^{p-1+2k}-1)\ze(p-1+2k)\ze(2q-2k)\nonumber\\
&=\frac{(-1)^{\frac{p-1+2q}{2}}}{2}(2\pi)^{p-1+2q}
    \sum_{k=1}^{[\frac{q}{2}]}\binom{p-2+2k}{p-1}\binom{2q-1-2k}{q-2k}
    \frac{(1-2^{p-1+2k})B_{p-1+2k}B_{2q-2k}}{(p-1+2k)!(2q-2k)!}\nonumber\\
&=\frac{(-1)^{\frac{p-1+2q}{2}}}{4}
    \frac{(2\pi)^{p-1+2q}}{(p-1)!(2q)!}\binom{2q}{q}q^2
    \sum_{k=1}^{[\frac{q}{2}]}\binom{q-1}{2k-1}\frac{G_{p-1+2k}B_{2q-2k}}{(p-1+2k)(2q-2k)}
    \,.\label{2nd.term}
\end{align}
By considering the fact $B_{2k+1}=G_{2k+1}=0$ for $k\geq 1$, the convolution identity (\ref{id.BG.GG}) can be rewritten as
\begin{align*}
&\sum_{k=1}^{[\frac{q}{2}]}\binom{q-1}{2k-1}\frac{G_{p-1+2k}B_{2q-2k}}{(p-1+2k)(2q-2k)}
    \nonumber\\
&\quad=\frac{(-1)^q}{q\binom{2q}{q}}\frac{G_{p-1+2q}}{p-1+2q}
    +\frac{1}{4}\sum_{\substack{i=0\\q+i\ \mathrm{even}}}^{p-1}(-1)^{i+1}\binom{p-1}{i}
        \frac{G_{q+i}G_{p+q-1-i}}{(q+i)(p+q-1-i)}\,,
\end{align*}
where $p\geq 1$ is odd, and $q\geq2$. Substituting it into the right of (\ref{2nd.term}), and using the relation
\[
G_{2k}=2(1-2^{2k})B_{2k}=\frac{(-1)^k\cdot4\t(2k)\cdot(2k)!}{(2\pi)^{2k}}\,,
\]
we have
\begin{align*}
\vOm_2
&=\frac{(-1)^{\frac{p-1}{2}}}{8}\binom{p-2+2q}{p-1}
    \frac{G_{p-1+2q}}{(p-1+2q)!}(2\pi)^{p-1+2q}\\
&\quad\quad-\sum_{\substack{i=0\\q+i\ \mathrm{even}}}^{p-1}
    (-1)^i\binom{q+i-1}{i}\binom{p+q-2-i}{p-1-i}
        \t(q+i)\t(p+q-1-i)\,.
\end{align*}
On the other hand, the first term on the right of Eq. (\ref{TS.qeq}) equals
\begin{align*}
\vOm_1&=(-1)^q\la_p((-1)^q\t(q)\t(q))\\
&=(-1)^q\sum_{\substack{i+j=p-1\\i,j\geq 0}}
    \binom{q+i-1}{i}\binom{q+j-1}{j}(-1)^{q+i}\t(q+i)\t(q+j)\\
&=\bibb{\sum_{\substack{i=0\\q+i\ \mathrm{odd}}}^{p-1}+
    \sum_{\substack{i=0\\q+i\ \mathrm{even}}}^{p-1}}
        (-1)^i\binom{q+i-1}{i}\binom{p+q-2-i}{p-1-i}\t(q+i)\t(p+q-1-i)\,.
\end{align*}
Thus, all the summands with even $q+i$ will be eliminated in $\vOm_1+\vOm_2$, and we obtain (\ref{TS.qeq.coeff}).
\end{proof}

By Theorem \ref{Th.TS.qeq.coeff}, the coefficients of various mathematical constants in the evaluations of the sums $(-1)^{q-1}\la_p(T_{q,q})+\la_q(\S_{p,q})$ can be completely determined. For example, setting $p=1$ in Eq. (\ref{TS.qeq.coeff}) gives
\[
(-1)^{q-1}T_{q,q}+\sum_{j=0}^{q-1}\binom{q+j-1}{j}\S_{q-j,q+j}
=\frac{1-(-1)^q}{2}(2^q-1)^2\ze(q)^2+\frac{G_{2q}}{8\cdot(2q)!}(2\pi)^{2q}\,,
\]
and setting $p=3$ yields
\begin{align*}
&(-1)^{q-1}\la_3(T_{q,q})+\la_q(\S_{3,q})\nonumber\\
&\quad=\frac{1-(-1)^q}{2}q(q+1)(2^q-1)(2^{q+2}-1)\ze(q)\ze(q+2)
    -\frac{1+(-1)^q}{2}q^2(2^{q+1}-1)^2\ze(q+1)^2\\
&\quad\quad-q(2q+1)\frac{G_{2q+2}}{8\cdot(2q+2)!}(2\pi)^{2q+2}\,.
\end{align*}


\subsection{The case of $m=q$ and $p$ even}\label{Sec.TSqeq}

Here, we present some interesting cases of Theorem \ref{Th.sym.TS} with odd weights. For example, setting $(m,p,q)$ by $(2, 1, 3)$ and $(3, 1, 2)$ yields the following two ones:
\begin{align*}
&T_{2,3}-3\S_{1,4}-\S_{2,3}=-\tf{5}{6}\pi^2\ze(3)\,,\\
&T_{3,2}+3\S_{1,4}+2\S_{2,3}+\S_{3,2}=\tf{19}{6}\pi^2\ze(3)\,,
\end{align*}
which are reducible to rational multiples of $\pi^2\ze(3)$. Moreover, the special cases of Theorem \ref{Th.sym.TS} with the mode of $(m,p,q)=(q,2n,q)$ also deserve attention.

\begin{corollary}\label{Coro.TSqeq}
For integers $q\geq 2$ and $n\geq 1$, the sums $(-1)^{q-1}\la_{2n}(T_{q,q})-\la_q(\S_{2n,q})$ are expressible in terms of $\ze(2n+i)\pi^{2q-1-i}$, where $i$ is odd and satisfies $1\leq i\leq q-1$. In particular, we have
\begin{align}
&(-1)^{q-1}\la_{2n}(T_{q,q})-\la_q(\S_{2n,q})\nonumber\\
&\quad=-2\sum_{\substack{i+j=q-1\\i\emph{ odd}}}
    \binom{2n+i-1}{i}\binom{q+j-1}{j}\t(2n+i)\ze(q+j)\,.\label{sym.TS.qeq}
\end{align}
\end{corollary}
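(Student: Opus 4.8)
\textbf{Proof proposal for Corollary \ref{Coro.TSqeq}.} The plan is to specialize Theorem \ref{Th.sym.TS} (equivalently, the reduction \eqref{TS.qeq}) to the mode $m=q$, $p=2n$, and then identify exactly which terms on the right-hand side survive as genuine (single) zeta values versus powers of $\pi$. Starting from \eqref{TS.qeq} with $p$ replaced by $2n$ and noting that the leading term $(-1)^q\la_p((-1)^q\t(q)\t(q))$ now has $p=2n$ even, I would first expand
\[
(-1)^q\la_{2n}((-1)^q\t(q)\t(q))
=\sum_{\substack{i+j=2n-1\\i,j\ge 0}}(-1)^i\binom{q+i-1}{i}\binom{q+j-1}{j}\t(q+i)\t(q+j)\,.
\]
The index set $\{(i,j):i+j=2n-1\}$ is symmetric under $(i,j)\mapsto(j,i)$, and since $i+j$ is odd exactly one of $i,j$ is even; pairing $(i,j)$ with $(j,i)$ the two contributions are $(-1)^i$ and $(-1)^j=-(-1)^i$, so they cancel in pairs. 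Hence this entire first term vanishes, which is the crucial simplification and the step I expect to require the most care (one must check the binomial factors match under the swap, which they do because $\binom{q+i-1}{i}\binom{q+j-1}{j}$ is symmetric in $i,j$).

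With the first term gone, \eqref{TS.qeq} collapses to
\[
(-1)^{q-1}\la_{2n}(T_{q,q})+\la_q(\S_{2n,q})
=-2\sum_{k=1}^{[\frac{q}{2}]}\binom{2n-2+2k}{2n-1}\binom{2q-1-2k}{q-2k}\t(2n-1+2k)\ze(2q-2k)\,.
\]
Reindexing by $i:=2k-1$ (odd, with $1\le i\le q-1$) and $j:=q-1-i=q-2k$, this is precisely the sum $-2\sum_{i+j=q-1,\ i\text{ odd}}\binom{2n+i-1}{i}\binom{q+j-1}{j}\t(2n+i)\ze(q+j)$ claimed in \eqref{sym.TS.qeq}; here I should double-check that $\la_q(\S_{2n,q})$ matches the left-hand side of \eqref{TS.qeq} after the substitution $p=2n$ (it does, since the sign $(-1)^{p-1}=(-1)^{2n-1}=-1$ accounts for the ``$-\la_q(\S_{2n,q})$'' appearing in the corollary's statement, versus the ``$+$'' that would come from an odd $p$).

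Finally, for the qualitative claim, each summand on the right is $\t(2n+i)\,\ze(q+j)$ with $i$ odd, $i+j=q-1$, so $q+j=2q-1-i$ is even; thus $\ze(q+j)=\ze(2q-1-i)$ is a rational multiple of $\pi^{2q-1-i}$ by the Euler evaluation of $\ze$ at even integers, and $\t(2n+i)=(2^{2n+i}-1)\ze(2n+i)$, giving a term of the shape $\ze(2n+i)\,\pi^{2q-1-i}$ with $i$ odd, $1\le i\le q-1$, exactly as asserted. No further input beyond Theorem \ref{Th.sym.TS}, the identity \eqref{TS.qeq}, and $\t(s)=(2^s-1)\ze(s)$ is needed; the only real content is the pairing argument that kills the $\t(q)\t(q)$-type term.
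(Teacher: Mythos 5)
Your proposal is correct and takes essentially the same route as the paper: specialize Theorem \ref{Th.sym.TS} to $(m,p)=(q,2n)$, observe that the $\la_{2n}((-1)^q\t(q)\t(q))$ term vanishes by the symmetry pairing $i\mapsto 2n-1-i$ (the paper's change of variable in half the sum is your $(i,j)\mapsto(j,i)$ pairing), and combine the remaining two terms into a factor $((-1)^i-1)$ that leaves only odd $i$, with $\ze(q+j)=\ze(2q-1-i)$ then an even zeta value. The only caution is that your intermediate display quotes \eqref{TS.qeq} with the sign ``$+\la_q(\S_{2n,q})$'', which is valid only for odd $p$; as you correctly note in the parenthetical, for $p=2n$ the factor $(-1)^{p-1}=-1$ turns this into ``$-\la_q(\S_{2n,q})$'', matching the corollary.
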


\begin{proof}
When $m=q$ and $p=2n$, by Theorem \ref{Th.sym.TS}, the left side of Eq. (\ref{sym.TS.qeq}) equals
\[
\text{LHS}=(-1)^q\la_{2n}((-1)^q\t(q)\t(q))
    +\la_q((-1)^{2n}\t(2n)\ze(q))-\la_q(\ze(q)\t(2n))\,.
\]
Note that the sum
\begin{align*}
&(-1)^q\la_{2n}((-1)^q\t(q)\t(q))\\
&\quad=\bibb{\sum_{i=0}^{n-1}+\sum_{i=n}^{2n-1}}
    \binom{q+i-1}{q-1}\binom{q+2n-2-i}{q-1}(-1)^i\t(q+i)\t(q+2n-1-i)
\end{align*}
will vanish by changing of the variable $i\to 2n-1-i$ in the second term, and
\begin{align*}
&\la_q((-1)^{2n}\t(2n)\ze(q))-\la_q(\ze(q)\t(2n))\\
&\quad=\sum_{\substack{i+j=q-1\\i,j\geq 0}}\binom{2n+i-1}{i}\binom{q+j-1}{j}
    ((-1)^i-1)\t(2n+i)\ze(q+j)\,.
\end{align*}
Thus, we obtain Eq. (\ref{sym.TS.qeq}). Finally, when $i$ is odd, $q+j=2q-1-i$ is even, so the assertion in the corollary also holds.
\end{proof}

It is obvious that there are $[q/2]$ terms in the right side of (\ref{sym.TS.qeq}). Here, we present the further cases of $q=2,3,4,5$, which satisfy $[q/2]<3$.

\begin{corollary}\label{Coro.TS2e2}
For integer $n\geq1$, the sums $\la_{2n}(T_{2,2})+\la_2(\S_{2n,2})$ reduce to rational multiples of $\pi^2\ze(2n+1)$. In particular, we have
\[
\frac{1}{2}\sum_{\substack{i+j=2n-1\\i,j\geq 0}}(i+1)(j+1)T_{i+2,j+2}
        +\{n\S_{2n+1,2}+\S_{2n,3}\}=\frac{n(2^{2n+1}-1)}{3}\pi^2\ze(2n+1)\,.
\]
\end{corollary}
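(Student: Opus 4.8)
Looking at Corollary \ref{Coro.TS2e2}, this is the $q=2$ case of Corollary \ref{Coro.TSqeq}, which in turn is the $(m,p,q)=(q,2n,q)$ specialization of Theorem \ref{Th.sym.TS}. The plan is to simply substitute $q=2$ into formula (\ref{sym.TS.qeq}) and evaluate the resulting single-term sum explicitly.

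\medskip

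\textbf{Proof proposal.} First I would apply Corollary \ref{Coro.TSqeq} with $q=2$. The left-hand side of (\ref{sym.TS.qeq}) becomes $(-1)^{2-1}\la_{2n}(T_{2,2})-\la_2(\S_{2n,2}) = -\la_{2n}(T_{2,2})-\la_2(\S_{2n,2})$; after multiplying through by $-1$ this matches the left-hand side claimed in Corollary \ref{Coro.TS2e2} once the operators $\la_{2n}$ and $\la_2$ are written out. Indeed, by the definition of $\la_m$, one has $\la_{2n}(T_{2,2})=\sum_{i+j=2n-1,\,i,j\ge 0}\binom{i+1}{i}\binom{j+1}{j}T_{i+2,j+2}=\sum_{i+j=2n-1}(i+1)(j+1)T_{i+2,j+2}$, and $\la_2(\S_{2n,2})=\binom{2n-1}{0}\binom{1}{1}\S_{2n,2}+\binom{2n}{1}\binom{0}{0}\S_{2n+1,2}=\S_{2n,3}+2n\,\S_{2n+1,2}$; dividing the whole identity by $2$ then produces exactly the bracketed expression $n\S_{2n+1,2}+\S_{2n,3}$ together with the factor $\tfrac12$ in front of the $T$-sum. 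So the structural (left-hand) side is purely bookkeeping.

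\medskip

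Next I would evaluate the right-hand side of (\ref{sym.TS.qeq}) at $q=2$. The constraint is $i+j=q-1=1$ with $i$ odd, forcing $i=1$, $j=0$. The single surviving summand is
\[
-2\binom{2n+1-1}{1}\binom{2+0-1}{0}\t(2n+1)\ze(2+0)
=-2\cdot 2n\cdot 1\cdot \t(2n+1)\ze(2)
=-4n\,\t(2n+1)\ze(2).
\]
Dividing by $2$ gives $-2n\,\t(2n+1)\ze(2)$ on the halved identity; but I must track signs carefully: Corollary \ref{Coro.TSqeq} gives $(-1)^{q-1}\la_{2n}(T_{q,q})-\la_q(\S_{2n,q})$ equal to the displayed sum, and with $q=2$ the prefactor $(-1)^{q-1}=-1$, so I should negate both sides to express $\la_{2n}(T_{2,2})+\la_2(\S_{2n,2})$. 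That yields $\la_{2n}(T_{2,2})+\la_2(\S_{2n,2})=4n\,\t(2n+1)\ze(2)$. Now using $\ze(2)=\pi^2/6$ and $\t(2n+1)=(2^{2n+1}-1)\ze(2n+1)$ (from the identity $\t(s)=(2^s-1)\ze(s)$ recorded in Section \ref{Sec.intro}), I get $4n\cdot\frac{\pi^2}{6}\cdot(2^{2n+1}-1)\ze(2n+1)=\frac{2n(2^{2n+1}-1)}{3}\pi^2\ze(2n+1)$; halving gives $\frac{n(2^{2n+1}-1)}{3}\pi^2\ze(2n+1)$, which is precisely the claimed value. The final assertion that the sums are rational multiples of $\pi^2\ze(2n+1)$ is then immediate.

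\medskip

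The only ``obstacle'' is arithmetic care: getting the sign $(-1)^{q-1}$ right when passing from Corollary \ref{Coro.TSqeq} to the $+$ combination in Corollary \ref{Coro.TS2e2}, correctly reading off the binomial coefficients $\binom{i+1}{i}=i+1$ and $\binom{j+1}{j}=j+1$ in the expansion of $\la_{2n}$, and tracking the factor of $2$ that relates (\ref{sym.TS.qeq}) to the halved form in the statement. There is no genuine analytic difficulty here; everything reduces to specialization of an already-proved identity plus the substitution of the standard evaluations $\ze(2)=\pi^2/6$ and $\t(2n+1)=(2^{2n+1}-1)\ze(2n+1)$.
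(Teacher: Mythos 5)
Your proposal follows essentially the paper's own route: the paper proves this corollary by specializing to $(m,p,q)=(2,2n,2)$ (equivalently, $q=2$ in Corollary \ref{Coro.TSqeq}) to obtain $\la_{2n}(T_{2,2})+\la_2(\S_{2n,2})=4n\ze(2)\t(2n+1)$, then halves and inserts $\ze(2)=\pi^2/6$ and $\t(2n+1)=(2^{2n+1}-1)\ze(2n+1)$, exactly as you do. One bookkeeping slip: in expanding $\la_2(\S_{2n,2})$, the $(i,j)=(0,1)$ term carries the coefficient $\binom{q+j-1}{j}=\binom{2}{1}=2$, not $\binom{1}{1}=1$, so $\la_2(\S_{2n,2})=2n\,\S_{2n+1,2}+2\,\S_{2n,3}$; only with this value does halving reproduce the bracket $n\S_{2n+1,2}+\S_{2n,3}$, whereas with your stated coefficient the halved identity would contain $\tfrac12\S_{2n,3}$ and not match the statement. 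The remaining steps (the sign $(-1)^{q-1}=-1$, the single surviving term $-4n\,\t(2n+1)\ze(2)$, and the final constant) are correct and coincide with the paper's argument.
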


\begin{proof}
It follows from the identity $\la_{2n}(T_{2,2})+\la_2(\S_{2n,2})=4n\ze(2)\t(2n+1)$, which corresponds to the case $(m,p,q)=(2,2n,2)$.
\end{proof}

\begin{example}
Replacing $(m,p,q)$ in Theorem \ref{Th.sym.TS} by $(2,2,2)$, $(2,4,2)$, $(2,6,2)$ yields
\begin{align*}
&T_{2,3}+T_{3,2}+\S_{2,3}+\S_{3,2}=\tf{7}{3}\pi^2\ze(3)\,,\\
&2T_{2,5}+3T_{3,4}+3T_{4,3}+2T_{5,2}+\S_{4,3}+2\S_{5,2}=\tf{62}{3}\pi^2\ze(5)\,,\\
&3T_{2,7}+5T_{3,6}+6T_{4,5}+6T_{5,4}+5T_{6,3}+3T_{7,2}+\S_{6,3}+3\S_{7,2}=127\pi^2\ze(7)\,.
\end{align*}
These can also be obtained from Corollary \ref{Coro.TS2e2} by setting $n=1,2,3$. \hfill\qedsymbol
\end{example}

The following three identities give special cases of $(m,p,q)=(3,2n,3),(4,2n,4),(5,2n,5)$, respectively, and can be derived directly from Corollary \ref{Coro.TSqeq}.

\begin{corollary}
For integer $n\geq1$, we have
\begin{align*}
&\la_{2n}(T_{3,3})-\la_3(\S_{2n,3})=-12n\ze(4)\t(2n+1)\,,\\
&\la_{2n}(T_{4,4})+\la_{4}(\S_{2n,4})
    =40n\ze(6)\t(2n+1)+\tf{1}{3}(2n+2)(2n+1)(2n)\ze(4)\t(2n+3)\,,\\
&\la_{2n}(T_{5,5})-\la_{5}(\S_{2n,5})
    =-140n\ze(8)\t(2n+1)-\tf{5}{3}(2n+2)(2n+1)(2n)\ze(6)\t(2n+3)\,.
\end{align*}
Therefore, the sums $(-1)^{q-1}\la_{2n}(T_{q,q})-\la_q(\S_{2n,q})$ reduce to rational multiples of $\pi^4\ze(2n+1)$ if $q=3$. Moreover, they are reducible to combinations of $\pi^6\ze(2n+1)$ and $\pi^4\ze(2n+3)$ if $q=4$, and to combinations of $\pi^8\ze(2n+1)$ and $\pi^6\ze(2n+3)$ if $q=5$.
\end{corollary}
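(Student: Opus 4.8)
The plan is to specialize Corollary \ref{Coro.TSqeq} (equivalently, the explicit formula (\ref{sym.TS.qeq})) to the three values $q=3,4,5$ and then evaluate the resulting finite sums explicitly. For a given $q$, the right-hand side of (\ref{sym.TS.qeq}) runs over pairs $i+j=q-1$ with $i$ odd, so it has exactly $[q/2]$ terms; all the relevant binomial coefficients and all the $\t$- and $\ze$-arguments are determined by $q$ and the summation index $n$. Thus for each $q$ I would simply list the admissible $(i,j)$, read off $\binom{2n+i-1}{i}\binom{q+j-1}{j}$, and collect terms.

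Concretely, first I would take $q=3$: the only pair with $i$ odd is $(i,j)=(1,1)$, giving $-2\binom{2n}{1}\binom{3}{1}\t(2n+1)\ze(4)=-12n\,\t(2n+1)\ze(4)$, and the left side is $(-1)^{2}\la_{2n}(T_{3,3})-\la_3(\S_{2n,3})=\la_{2n}(T_{3,3})-\la_3(\S_{2n,3})$, which yields the first displayed identity. Next, for $q=4$ the admissible pairs are $(i,j)=(1,2)$ and $(3,0)$, producing $-2\binom{2n}{1}\binom{5}{2}\t(2n+1)\ze(6)-2\binom{2n+2}{3}\binom{3}{0}\t(2n+3)\ze(4)$; simplifying $\binom{2n}{1}\binom{5}{2}=10n$ and $2\binom{2n+2}{3}=\tfrac13(2n+2)(2n+1)(2n)$, and noting $(-1)^{q-1}=(-1)^3=-1$ so the left side is $-\big(\la_{2n}(T_{4,4})-\la_4(\S_{2n,4})\big)$, i.e. $\la_{2n}(T_{4,4})+\la_4(\S_{2n,4})$ equals the negative of the right-hand sum, which is the stated identity. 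Finally, for $q=5$ the admissible pairs are $(i,j)=(1,3)$ and $(3,1)$, giving $-2\binom{2n}{1}\binom{7}{3}\t(2n+1)\ze(8)-2\binom{2n+2}{3}\binom{5}{1}\t(2n+3)\ze(6)$; here $\binom{2n}{1}\binom{7}{3}=70n$, $2\binom{2n+2}{3}\binom{5}{1}=\tfrac53(2n+2)(2n+1)(2n)$, and $(-1)^{q-1}=1$, producing the third identity.

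For the concluding ``Therefore'' sentence, I would invoke the evaluation $\ze(2k)=\frac{(-1)^{k+1}B_{2k}(2\pi)^{2k}}{2\,(2k)!}$ used already in the proof of Theorem \ref{Th.TS.qeq.coeff}: this shows $\ze(4)$, $\ze(6)$, $\ze(8)$ are rational multiples of $\pi^4$, $\pi^6$, $\pi^8$ respectively. Substituting into the three identities gives: for $q=3$ a rational multiple of $\pi^4\ze(2n+1)$; for $q=4$ a rational combination of $\pi^6\ze(2n+1)$ and $\pi^4\ze(2n+3)$; and for $q=5$ a rational combination of $\pi^8\ze(2n+1)$ and $\pi^6\ze(2n+3)$, exactly as claimed. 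One small bookkeeping point to be careful about is the sign $(-1)^{q-1}$ attached to $\la_{2n}(T_{q,q})$ on the left of (\ref{sym.TS.qeq}), since it flips between $q=3,5$ (where it is $+1$) and $q=4$ (where it is $-1$), and this is precisely why the $\S$-term appears with a minus sign for $q=3,5$ but a plus sign for $q=4$ in the displayed formulas.

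There is essentially no genuine obstacle here: the statement is a direct corollary, and the only ``work'' is the elementary simplification of a handful of binomial products (e.g. $2\binom{2n+2}{3}=\tfrac13(2n)(2n+1)(2n+2)$) together with careful sign tracking. The one place where an error could creep in is conflating $\t$-values with $\ze$-values in the final reduction — one must remember $\t(2k)=(2^{2k}-1)\ze(2k)$ is still a rational multiple of $\pi^{2k}$, so the $\t$-factors carrying \emph{even} arguments (namely the $\ze(q+j)$ that were written via $\t$ in the general theorem, but here appear already as $\ze(q+j)$ with $q+j$ even) collapse correctly, while the $\t(2n+i)$ with $2n+i$ odd remain genuinely transcendental and are carried along untouched.
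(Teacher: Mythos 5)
Your proposal is correct and is exactly the paper's route: the paper states that these three identities are the special cases $(m,p,q)=(3,2n,3),(4,2n,4),(5,2n,5)$ derived directly from Corollary \ref{Coro.TSqeq}, i.e.\ from (\ref{sym.TS.qeq}), just as you do, with the concluding reduction using $\ze(2k)\in\mathbb{Q}\,\pi^{2k}$ and $\t(2n+1)=(2^{2n+1}-1)\ze(2n+1)$. Two harmless slips in your $q=4$ case are worth fixing: $\binom{2n}{1}\binom{5}{2}=20n$ (not $10n$), which is what yields the coefficient $40n$ after the factor $-2$ and the sign flip, and the left side there is $-\bigl(\la_{2n}(T_{4,4})+\la_4(\S_{2n,4})\bigr)$ rather than $-\bigl(\la_{2n}(T_{4,4})-\la_4(\S_{2n,4})\bigr)$; your stated conclusions are nonetheless the correct ones.
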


\begin{example}
Replacing $(m,p,q)$ by $(3,2,3)$, $(3,4,3)$ in Theorem \ref{Th.sym.TS} yields
\begin{align*}
&T_{3,4}+T_{4,3}-2\S_{2,5}-2\S_{3,4}-\S_{4,3}= -\tf{14}{45}\pi^4\ze(3)\,,\\
&5T_{3,6}+9T_{4,5}+9T_{5,4}+5T_{6,3}-3\S_{4,5}-6\S_{5,4}-5\S_{6,3}=-\tf{62}{15}\pi^4\ze(5)\,,
\end{align*}
respectively. Replacing $(m,p,q)$ by $(4,2,4)$, $(5,2,5)$ yields
\begin{align*}
&T_{4,5}+T_{5,4}+5\S_{2,7}+5\S_{3,6}+3\S_{4,5}+\S_{5,4}
    =\tf{31}{45}\pi^4\ze(5)+\tf{2}{27}\pi^6\ze(3)\,,\\
&T_{5,6}+T_{6,5}-14\S_{2,9}-14\S_{3,8}-9\S_{4,7}-4\S_{5,6}-\S_{6,5}
    =-\tf{248}{945}\pi^6\ze(5)-\tf{14}{675}\pi^8\ze(3)\,,
\end{align*}
respectively.\hfill\qedsymbol
\end{example}


\subsection{Symmetric sum on double $T$-values and double $t$-values}

By the relations
\[
\S_{p,q}=2^{p+q-2}T(q,p)\quad\text{and}\quad
    T_{p,q}=\t(q,p)=2^{p+q}t(q,p)\,,
\]
we can transform Theorem \ref{Th.sym.TS} into the following one on Hoffman's double $t$-values and Kaneko-Tsumura's double $T$-values.

\begin{theorem}\label{Th.sym.TtV}
For integers $m,p\geq1$ and $q\geq 2$, we have
\[
(-1)^{m-1}2^{m+p+q-1}\la_p(t(q,m))+(-1)^{p-1}2^{m+p+q-3}\la_m(T(q,p))
    \in\mathbb{Q}[\ln(2),\mathrm{zeta\ values}]\,.
\]
In particular, the following expression holds:
\begin{align}
&(-1)^{m-1}2^{m+p+q-1}\la_p(t(q,m))
    +(-1)^{p-1}2^{m+p+q-3}\la_m(T(q,p))\nonumber\\
&\quad=(-1)^m\la_p((-1)^m\t(m)\t(q))
    +(-1)^p\la_m((-1)^p\t(p)\ze(q))
    -\la_q(\ze(m)\t(p))\,,\label{sym.Ttv.expfor}
\end{align}
where, by our conventions, $\ze(1):=-2\ln(2)$ and $\t(1):=0$ wherever they occur.
\end{theorem}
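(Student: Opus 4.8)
The plan is to derive Theorem \ref{Th.sym.TtV} as a direct corollary of Theorem \ref{Th.sym.TS} by substituting the translation formulas between the linear Euler sums and the level-two double zeta values. Recall from the introduction (Eqs. (\ref{Tpq.MtV.MZV}) and (\ref{Spq.MTV.MZV})) that $T_{p,q}=\t(q,p)=2^{p+q}t(q,p)$ and $\S_{p,q}=2^{p+q-2}T(q,p)$. The weight of every term appearing inside $\la_p(T_{m,q})$ is $m+p+q-1$: indeed, a generic summand $\binom{m+i-1}{i}\binom{q+j-1}{j}T_{m+i,q+j}$ with $i+j=p-1$ has index sum $(m+i)+(q+j)=m+q+p-1$. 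Hence the factor $2^{(m+i)+(q+j)}=2^{m+p+q-1}$ is constant across the sum, and $\la_p(T_{m,q})=2^{m+p+q-1}\la_p(t(q,m))$. Likewise every summand of $\la_m(\S_{p,q})$ has total index $(p+i)+(q+j)=p+q+m-1$ with $i+j=m-1$, so the factor $2^{(p+i)+(q+j)-2}=2^{m+p+q-3}$ is constant and $\la_m(\S_{p,q})=2^{m+p+q-3}\la_m(T(q,p))$.

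The main (and essentially only) step is to verify that $\la$ commutes with these scalar rescalings in the sense just described, i.e. that the power of $2$ depends only on the \emph{total} weight $w=m+p+q-1$ and not on the individual indices $i,j$. This is exactly the homogeneity observation above: the binomial coefficients $\binom{m+i-1}{i}$ and $\binom{q+j-1}{j}$ carry no powers of $2$, and the exponents $(m+i)+(q+j)$ (respectively $(p+i)+(q+j)-2$) are forced to be $m+p+q-1$ (respectively $m+p+q-3$) by the constraint $i+j=p-1$ (respectively $i+j=m-1$). I would state this explicitly as a one-line remark before invoking it.

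With these two substitutions in hand, I would plug $\la_p(T_{m,q})=2^{m+p+q-1}\la_p(t(q,m))$ and $\la_m(\S_{p,q})=2^{m+p+q-3}\la_m(T(q,p))$ directly into the identity (\ref{sym.TS.expfor}) of Theorem \ref{Th.sym.TS}. The left-hand side becomes
\[
(-1)^{m-1}2^{m+p+q-1}\la_p(t(q,m))+(-1)^{p-1}2^{m+p+q-3}\la_m(T(q,p)),
\]
which is precisely the left side of (\ref{sym.Ttv.expfor}), while the right-hand side of (\ref{sym.TS.expfor}) is left unchanged. This proves (\ref{sym.Ttv.expfor}). The membership statement $(-1)^{m-1}2^{m+p+q-1}\la_p(t(q,m))+(-1)^{p-1}2^{m+p+q-3}\la_m(T(q,p))\in\mathbb{Q}[\ln(2),\mathrm{zeta\ values}]$ then follows immediately, either from (\ref{sym.Ttv.expfor}) together with the fact that $\t(s)=(2^s-1)\ze(s)$ for integers $s\ge 2$ and the conventions $\ze(1)=-2\ln(2)$, $\t(1)=0$, or directly by multiplying the statement (\ref{sym.TS}) by the relevant constant powers of $2$ (the same powers factor out of each $\la$-sum, so the scaled combination still lies in $\mathbb{Q}[\ln(2),\mathrm{zeta\ values}]$).

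I do not anticipate a genuine obstacle here; the only point requiring care is the bookkeeping of the powers of $2$ in the Kaneko--Tsumura normalization of $T$-values versus the $t$-value normalization — specifically keeping track of the $-2$ in the exponent coming from the definition $\S_{p,q}=2^{p+q-2}T(q,p)$, which is why the two terms carry $2^{m+p+q-1}$ and $2^{m+p+q-3}$ respectively rather than a common power. Once that is correctly recorded, the proof is a substitution.
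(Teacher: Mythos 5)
Your proposal is correct and matches the paper's own proof, which also obtains the theorem by substituting the transformation formulas $\la_p(T_{m,q})=2^{m+p+q-1}\la_p(t(q,m))$ and $\la_m(\S_{p,q})=2^{m+p+q-3}\la_m(T(q,p))$ into Eq. (\ref{sym.TS.expfor}) of Theorem \ref{Th.sym.TS}. Your explicit homogeneity remark (that the constraint $i+j=p-1$, resp. $i+j=m-1$, forces a constant power of $2$ across each $\la$-sum) is just a spelled-out justification of the same formulas the paper invokes.
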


\begin{proof}
It follows from the transformation formulas
\begin{equation}\label{la.MtTV.STsum}
\la_p(T_{m,q})=2^{m+p+q-1}\la_p(t(q,m))\,,\quad
\la_m(\S_{p,q})=2^{m+p+q-3}\la_m(T(q,p))\,,
\end{equation}
and Eq. (\ref{sym.TS.expfor}).
\end{proof}

In the next corollary, we show that by Theorem \ref{Th.sym.TtV}, similar results to (\ref{KT.conj}) also hold for double $t$-values, linear $T$-sums and linear $\S$-sums, from which we can further show a reduction property of the double $t$-values and linear $T$-sums.

\begin{corollary}\label{Coro.Ts.dtv.MZV}
For any even weight $w:=m+q$, where $m,q\geq2$, the double $t$-values $t(q,m)$ and linear $T$-sums $T_{m,q}$ are expressible in terms of MZVs.
\end{corollary}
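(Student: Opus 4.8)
The plan is to exploit the symmetric identity of Theorem \ref{Th.sym.TtV} together with the already-known unconditional reducibility results in odd weight. Specifically, I would start from formula \eqref{sym.Ttv.expfor}, which expresses the symmetric combination
\[
(-1)^{m-1}2^{m+p+q-1}\la_p(t(q,m))
    +(-1)^{p-1}2^{m+p+q-3}\la_m(T(q,p))
\]
in terms of products $\t(\cdot)\t(\cdot)$, $\t(\cdot)\ze(\cdot)$ and $\ze(\cdot)\t(\cdot)$, all of which lie in $\mathbb{Q}[\ln 2,\text{zeta values}]$. The key observation is that we have freedom in the parameter $p\geq 1$: if we can choose $p$ so that the \emph{second} term $\la_m(T(q,p))$ is itself known to be reducible to MZVs, then the first term $\la_p(t(q,m))$ — and in particular, taking $p=1$, the single value $t(q,m)$ — is forced to be reducible as well.

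The natural choice is $p=1$ for the $t$-side and then a \emph{separate} application with a suitably large odd $p$, or more directly: take $p=1$ in \eqref{sym.Ttv.expfor}, so $\la_1(t(q,m))=t(q,m)$ and $\la_m(T(q,1))$ appears. I would then invoke the Kaneko–Tsumura conjecture \eqref{KT.conj}, proved by Murakami, which says precisely that $\la_{m+1}(T(p,q))\in\mathcal{Z}$ for $m,q\geq 1$, $p\geq 2$, with $m+p+q$ even — but one must be careful that the roles of the arguments in $T(q,p)$ versus the conjecture's $T(p,q)$ match, and that the required parity and range conditions ($m\geq 2$, the depth-two argument $\geq 2$) are met. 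Concretely, for even $w=m+q$ with $m,q\geq 2$: rewrite $\la_m(T(q,1))$ using the operator definition and the stuffle/shuffle relations for double $T$-values to bring it into a form covered by Murakami's theorem (here $q\geq 2$ plays the role of the ``$p\geq 2$'' argument and the weight $1+q+(m-1)=m+q$ is even as required). Since every other term in \eqref{sym.Ttv.expfor} with $p=1$ lies in $\mathbb{Q}[\ln 2,\text{zeta values}]\subseteq\mathcal{Z}$, we conclude $t(q,m)\in\mathcal{Z}$. Finally, $T_{m,q}=\t(q,m)=2^{m+q}t(q,m)$ by \eqref{Tpq.MtV.MZV}, so $T_{m,q}\in\mathcal{Z}$ as well.

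An alternative, perhaps cleaner, route avoids reshuffling $T(q,1)$: apply \eqref{sym.Ttv.expfor} with a large odd $p$. When $w=m+q$ is even and $p$ is odd, the total weight $m+p+q-1$ is even, and by \cite[Corollaries 3.3 and 3.8]{WangXu20.DTE} (cited in the paragraph after Theorem \ref{Th.sym.TS}), all linear $T$-sums and $\S$-sums of odd weight are reducible — wait, here the weight is even — so instead I would use that $\la_m(\S_{p,q})=2^{m+p+q-3}\la_m(T(q,p))$ and invoke \eqref{KT.conj} directly for $\la_m(T(q,p))$ with $q\geq 2$, $p$ odd $\geq 3$, $m\geq 1$, $m+p+q-1$ even (equivalently $m+q$ odd — so this needs $m+q$ odd, not even). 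The parity bookkeeping is delicate, so in the write-up I would fix $p=1$ and handle the single term $\la_m(T(q,1))$ by expanding it via the definition of $\la_m$ into a $\mathbb{Z}$-linear combination of $T(q+i,1+j)$ with $i+j=m-1$, each of which is a depth-two MTV of weight $q+i+1+j=m+q$ (even) with first argument $q+i\geq 2$; these fall under Murakami's proven case of \eqref{KT.conj} after noting $T(q+i,1+j)$ can be summed via \eqref{Spq.MTV.MZV}-type relations into alternating double zeta values, hence into $\mathcal{Z}$ in even weight.

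The main obstacle I anticipate is the parity and argument-position bookkeeping: \eqref{KT.conj} requires the \emph{first} argument $\geq 2$ and $m+p+q$ even, while in \eqref{sym.Ttv.expfor} the term that appears is $\la_m(T(q,p))$ — so matching this to the conjecture's $\la_{m'+1}(T(p',q'))$ with $p'\geq 2$ requires care, and for $p=1$ the inner $T(q,1)$ has its ``small'' argument in the last slot, which may need an explicit stuffle relation $T(a,1)+T(1,a)=\cdots$ to reduce. The other slightly delicate point is confirming that every auxiliary product ($\t(m)\t(q)$, $\t(p)\ze(q)$, $\ze(m)\t(p)$, and the $\ln 2$ that hides in $\ze(1)$ and $\t(1)$ under the stated conventions) genuinely lies in $\mathcal{Z}$ — this is immediate since $\t(s)=(2^s-1)\ze(s)$, $\ln 2\in\mathcal{Z}$ via $\ze(\bar 1)$-type values, and $\mathcal{Z}$ is a ring, but it should be stated explicitly. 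Once these are pinned down, the conclusion for both $t(q,m)$ and $T_{m,q}$ follows in one line from \eqref{Tpq.MtV.MZV}.
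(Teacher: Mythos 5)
Your overall route---set $p=1$ in (\ref{sym.Ttv.expfor}), feed the $T$-side to Murakami's proof of (\ref{KT.conj}), conclude $t(q,m)=\la_1(t(q,m))\in\mathcal{Z}$, and finish with $T_{m,q}=2^{m+q}t(q,m)$---is exactly the paper's argument, but two of your supporting claims are wrong as written. First, $\ln 2$ is \emph{not} in $\mathcal{Z}$: $\ln 2=-\ze(\bar 1)$ is an alternating (level-two) value, and it is neither known nor expected to lie in the algebra of usual MZVs, so you cannot absorb a stray $\ln 2$ into $\mathcal{Z}$ ``via $\ze(\bar 1)$-type values.'' The correct observation, which is the one the paper makes, is that for $m,q\geq 2$ and $p=1$ no $\ln 2$ occurs at all on the right of (\ref{sym.Ttv.expfor}): every $\ze$-argument appearing there is $m+i\geq 2$ or $q+j\geq 2$, so the convention $\ze(1):=-2\ln 2$ is never invoked, and the borderline terms containing $\t(1)$ vanish since $\t(1):=0$. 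Second, your fallback justification that each individual $T(q+i,1+j)$ lies in $\mathcal{Z}$ because it ``can be summed via (\ref{Spq.MTV.MZV})-type relations into alternating double zeta values, hence into $\mathcal{Z}$ in even weight'' is false: even-weight alternating double zeta values such as $\ze(\bar 5,1)$ (which appears in the paper's own evaluations of $T_{1,5}$ and $\S_{1,5}$) are precisely the quantities that are not reducible to MZVs, and if individual even-weight double $T$-values were in $\mathcal{Z}$ the Kaneko--Tsumura conjecture would be trivial. Reducibility here is a property of the specific binomially weighted sum, not of its summands.

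Relatedly, the stuffle manipulation you anticipate for $T(q,1)$ is unnecessary: in (\ref{KT.conj}) only the \emph{first} argument must be $\geq 2$, while the second may equal $1$. Writing out $\la_m(T(q,1))=\sum_{i+j=m-1}\binom{q+i-1}{i}\binom{j}{j}T(q+i,1+j)$, this is literally the sum in (\ref{KT.conj}) with the conjecture's $(p,q,m)$ replaced by $(q,1,m-1)$; the conditions $q\geq 2$, $1\geq 1$, $m-1\geq 1$ and the parity $(m-1)+q+1=m+q$ even are all satisfied, so Murakami's theorem yields $\la_m(T(q,1))\in\mathcal{Z}$ directly, as a whole weighted sum rather than term by term. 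Once you replace the term-by-term reduction and the $\ln 2\in\mathcal{Z}$ claim by these two observations, your proof coincides with the paper's.
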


\begin{proof}
According to the Kaneko-Tsumura conjecture (\ref{KT.conj}) (see also \cite[Theorem 42]{Mura21}), for integers $m,q\geq2$ and $p\geq 1$, with $m+p+q-1$ even, the sums $\la_m(T(q,p))\in\mathcal{Z}$, so $\la_m(\S_{p,q})\in\mathcal{Z}$ by (\ref{la.MtTV.STsum}). On the other hand, if $m,q\geq 2$, we have
\[
(-1)^{m-1}2^{m+p+q-1}\la_p(t(q,m))+(-1)^{p-1}2^{m+p+q-3}\la_m(T(q,p))
    \in\mathbb{Q}[\mathrm{zeta\ values}]\,,
\]
and $\ln(2)$ does not appear in the evaluations of the sums on the left. Thus, when $m,q\geq 2$, $p\geq 1$, and $m+p+q-1$ is even, we have
\begin{equation}
\la_p(t(q,m)),\ \la_p(T_{m,q})\in\mathcal{Z}\,.
\end{equation}
The final assertion of this corollary follows by setting $p=1$.
\end{proof}

\begin{remark}
Corollary \ref{Coro.Ts.dtv.MZV} can be compared with \cite[Corollary 3.3]{WangXu20.DTE}, where the latter shows that $T_{p,q}$ and $t(q,p)$ are reducible to $\ln(2)$ and zeta values if $p\geq 1$, $q\geq 2$, and $p+q$ is odd. Corollary \ref{Coro.Ts.dtv.MZV} is also a special case of Murakami's recent result \cite[Theorem 1]{Mura21}, which shows that when all $s_j\geq 2$, the MtVs $t(s_1,\ldots,s_k)$ are expressible in terms of MZVs.\hfill\qedsymbol
\end{remark}


\section{Symmetric extension on linear $R$-sums}\label{Sec.R}


\subsection{Main theorem on linear $R$-sums}

In this section, we present a symmetric extension of the Kaneko-Tsumura conjecture (\ref{KT.conj}) on the linear $R$-sums:
\[
R_{p,q}:=\sum_{n=1}^{\infty}\frac{H_{n-1}^{(p)}}{(n-1/2)^q}\,,\quad\text{for }q\geq 2\,.
\]

\begin{theorem}\label{Th.sym.R}
For integers $m,p\geq 1$ and $q\geq2$, we have
\begin{align}
&(-1)^{m-1}\sum_{\substack{i+j=p-1\\i,j\geq 0}}
    \binom{m+i-1}{i}\binom{q+j-1}{j}R_{m+i,q+j}\nonumber\\
&\quad+(-1)^{p-1}\sum_{\substack{i+j=m-1\\i,j\geq 0}}
    \binom{p+i-1}{i}\binom{q+j-1}{j}R_{p+i,q+j}
    \in\mathbb{Q}[\ln(2),\mathrm{zeta\ values}]\,.\label{sym.R}
\end{align}
In particular, the following expression holds:
\begin{align}
&(-1)^{m-1}\la_p(R_{m,q})+(-1)^{p-1}\la_m(R_{p,q})\nonumber\\
&\quad=\binom{m+p+q-2}{q-1}\t(m+p+q-1)
    +(-1)^m\la_p((-1)^m\ze(m)\t(q))\nonumber\\
&\quad\quad+(-1)^p\la_m((-1)^p\ze(p)\t(q))
    -\la_q(\t(m)\t(p))\,,\label{sym.R.expfor}
\end{align}
where, by our conventions, $\ze(1):=-2\ln(2)$ and $\t(1):=0$ wherever they occur.
\end{theorem}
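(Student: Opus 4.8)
The plan is to mimic exactly the residue-computation strategy used in the proof of Theorem~\ref{Th.sym.TS}, but with a kernel built from two copies of the \emph{shifted} digamma function $\vPsi(\tf{1}{2}-s)$ (rather than one shifted and one unshifted), so that the $R$-sums $R_{p,q}=\sum_n H_{n-1}^{(p)}/(n-1/2)^q$ appear at both families of poles. Concretely, I would apply Lemma~\ref{Lem.Res} to
\[
\mathcal{G}_2(s):=\frac{\vPsi^{(m-1)}(\tf{1}{2}-s)\,\vPsi^{(p-1)}(\tf{1}{2}-s)}
    {\bigl(s+\tf{1}{2}\bigr)^{q}\,(m-1)!\,(p-1)!}\,,
\]
whose rational part $\bigl(s+\tf12\bigr)^{-q}$ is $O(s^{-2})$ at infinity, and whose pole at $s=-\tf12$ (a pole of order $q$ of the rational factor, but also a zero-order pole of the digamma factors since $\vPsi(\tf12-s)$ is regular there with value $\vPsi(\tf12-s)\to\vPsi(1)=0+2\ln 2$? — this needs care) contributes the ``diagonal'' term. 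The poles of $\mathcal{G}_2$ fall into three classes: $s=n$ for $n\geq 0$ (order $m$, resp.\ $p$, coming from each $\vPsi$ factor), $s=n-\tf12$ for $n\geq 1$ (where $\bigl(s+\tf12\bigr)^{-q}$ is regular, so only the digamma factors contribute — but both factors are regular there too, so these are not poles), and $s=-(n-\tf12)$ for $n\geq 1$ (order $m+p$ combined from the two $\vPsi$ factors), plus possibly $s=-\tf12$ itself. I would identify which of these actually occur and compute each residue using the local expansions of Lemmas~\ref{Lem.s.n}, \ref{Lem.s.halfint} and \ref{Lem.s.nn}; the $R_{p,q}$ sums should emerge from the poles at $s=-(n-\tf12)$ via expansion~\eqref{p.s.nhalfint}, which involves $h_{n-1}^{(j)}$, matched against the $(s+\tf12)^{-q}$-type factor — wait, that gives $h$ not $H$, so in fact the right kernel must pair expansion~\eqref{p.s.nn} (which carries $H_{n-1}^{(j)}$, poles at $s=-n$) with a half-integer denominator. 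Let me correct: the kernel should be
\[
\mathcal{G}_2(s)=\frac{\vPsi^{(m-1)}(\tf12-s)\,\vPsi^{(p-1)}(\tf12-s)}{(s+1)^q\,(m-1)!\,(p-1)!}
\]
with poles at $s=-n$ (order $m+p$, giving $H_{n-1}^{(\cdot)}$ via~\eqref{p.s.nn} twice), at $s=n-\tf12$ for $n\geq 1$ (order — again the digamma factors are regular at half-integers, so not poles), at $s=n$ for $n\geq 0$... and this doesn't produce $(n-1/2)^{-q}$ denominators either.

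So the correct kernel must have a half-integer-shifted rational factor \emph{and} use expansion~\eqref{p.s.nn}: take
\[
\mathcal{G}_2(s):=\frac{\vPsi^{(m-1)}(\tf12-s)\,\vPsi^{(p-1)}(\tf12-s)}
    {\bigl(s+\tf12\bigr)^{q}\,(m-1)!\,(p-1)!}\,,
\]
and note the poles at $s=-n$ ($n\geq 1$) are of combined order $m+p$ but the rational factor is regular there — no, $\vPsi^{(p-1)}(\tf12-s)$ has poles at $s=-n$ by~\eqref{p.s.nn}, so $\mathcal{G}_2$ has poles at $s=-n$ of order $(m-1)+(p-1)+\dots$; expanding two copies of~\eqref{p.s.nn} against the regular factor $\bigl(s+\tf12\bigr)^{-q}$ and extracting the residue (i.e.\ the $(s+n)^{-1}$ coefficient) produces a double sum over the $H_{n-1}$-series, which upon resumming in $n$ gives a product-of-two $R$-type pieces plus the symmetric cross terms $R_{m+i,q+j}+R_{p+i,q+j}$ — this is the source of the left-hand side of~\eqref{sym.R}. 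The poles at $s=n$ and $s=n-\tf12$ ($n\geq 0$) have orders $m+p$ and $0$ respectively (the $\vPsi(\tf12-s)$ are regular at half-integers, with poles only at $s=n$ via~\eqref{p.sn}), and their residues, via two copies of~\eqref{p.sn} against $(s+\tf12)^{-q}$, yield terms in $\ze$ and $\t$ and zeta-value products; the binomial $\binom{m+p+q-2}{q-1}\t(m+p+q-1)$ in~\eqref{sym.R.expfor} is exactly the ``principal'' contribution collected from the top-order part of this residue at $s=0$ (where $\bigl(0+\tf12\bigr)^{-q}=2^q$ supplies the $\t$-normalization).

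The main obstacle, and where I would spend the most care, is the bookkeeping of the residue at the \emph{merged} pole — whichever point is simultaneously a high-order pole of the product of two digamma factors — since there one must multiply two formal Laurent series (each from Lemma~\ref{Lem.s.n}/\ref{Lem.s.nn}), extract a single coefficient, and then recognize the resulting convolution of binomial coefficients as a $\la$-operator applied to a product; controlling the signs $(-1)^{m-1}$, $(-1)^{p-1}$ through this is the delicate part. A secondary subtlety is verifying the kernel's growth condition ($\xi(s)=o(s)$ on a family of expanding circles) for the product $\vPsi^{(m-1)}(\tf12-s)\vPsi^{(p-1)}(\tf12-s)$ — one uses that $\vPsi(\tf12-s)$ grows only logarithmically away from its poles, and its derivatives even more mildly, so circles $|s|=\rho_k$ chosen to avoid the poles work, just as in~\cite{FlSa98}. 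Once all residues are in hand, summing them to zero via Lemma~\ref{Lem.Res}, moving the $R$-sum terms to one side and the $\ze$/$\t$ terms to the other, and substituting $\t(s)=(2^s-1)\ze(s)$ for $s\geq 2$ (with the conventions $\ze(1)=-2\ln 2$, $\t(1)=0$ absorbing the digamma special values at the exceptional poles) yields both~\eqref{sym.R.expfor} and, since every term on the right lies in $\mathbb{Q}[\ln 2,\text{zeta values}]$, the membership~\eqref{sym.R}.
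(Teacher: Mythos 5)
Your final choice of kernel, $\mathcal{G}_2(s)=\vPsi^{(m-1)}(\tf12-s)\,\vPsi^{(p-1)}(\tf12-s)/\{(s+\tf12)^q(m-1)!(p-1)!\}$, is exactly the one the paper uses, and the overall strategy (apply Lemma \ref{Lem.Res} and sum residues) is the right one. But your analysis of the pole structure contains a genuine error that would derail the computation: you assert that $\vPsi^{(p-1)}(\tf12-s)$ has poles at the negative integers $s=-n$ ``by \eqref{p.s.nn}'', and you make these alleged poles the source of the $R$-sums. Expansion \eqref{p.s.nn} is a \emph{regular} Taylor expansion: the sum starts at $j=p$, so its lowest power is $(s+n)^0$, and likewise \eqref{p1.s.nn} has no principal part. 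The function $\vPsi(\tf12-s)$ and its derivatives have poles only at the nonnegative integers $s=n$ (Lemma \ref{Lem.s.n}); there are no residues at $s=-n$, so your proposed mechanism for producing the left-hand side of \eqref{sym.R} yields nothing.

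The correct bookkeeping is the opposite of what you wrote for the poles at $s=n\geq0$: these are the order-$(m+p)$ poles, and it is precisely there that the $R$-sums arise. One multiplies two copies of \eqref{p.sn} (carrying $H_n^{(j)}$ and $\ze(j)$) against the binomial expansion of $(s+\tf12)^{-q}$ about $s=n$, which supplies denominators $(n+\tf12)^{q+j}$; summing over $n\geq0$ and using $R_{p,q}=\sum_{n\geq0}H_n^{(p)}/(n+\tf12)^q$ produces the two symmetric families $R_{m+i,q+j}$, $R_{p+i,q+j}$ together with the $\la$-images of $\ze(\cdot)\t(\cdot)$. In the same residues, the product of the two principal parts $(s-n)^{-m}(s-n)^{-p}$ against the $(s-n)^{m+p-1}$ coefficient of $(s+\tf12)^{-q}$ gives $\binom{m+p+q-2}{q-1}(n+\tf12)^{-(m+p+q-1)}$ for \emph{every} $n\geq0$, and it is the sum over all $n$ that yields $\binom{m+p+q-2}{q-1}\t(m+p+q-1)$ --- not, as you claim, a single contribution at $s=0$ normalized by $2^q$. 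Finally, at $s=-\tf12$ the digamma factors are regular with expansion \eqref{p.s.halfint} at $n=0$ (equivalently \eqref{p.s.nhalfint} at $n=1$), whose coefficients involve $\t(j)$ only (the value there is $h_0=0$, not $2\ln 2$ as you tentatively wrote), so the order-$q$ pole of the rational factor gives the residue $(-1)^{m+p}\la_q(\t(m)\t(p))$. With the pole locations corrected in this way, the rest of your plan (growth estimate, summing residues to zero, isolating the $R$-terms) does go through and recovers \eqref{sym.R.expfor} and hence \eqref{sym.R}, but as written the proposal attributes the key terms to nonexistent poles and so does not constitute a proof.
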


\begin{proof}
The proof of this theorem is similar to that of Theorem \ref{Th.sym.TS}. Now, consider the function
\[
\mathcal{G}_2(s):=
    \frac{\vPsi^{(m-1)}(\tf{1}{2}-s)\vPsi^{(p-1)}(\tf{1}{2}-s)}
        {(s+\tf{1}{2})^q(m-1)!(p-1)!}\,.
\]
It is obvious that the only singularities are poles at $s=-1/2$ and $s=n$ for $n\geq0$. By (\ref{p.s.halfint}), the pole at $-1/2$ has order $q$, and the residue is
\[
{\rm Res}(\mathcal{G}_2(s),-\tf{1}{2})=(-1)^{m+p}\la_q(\t(m)\t(p))\,.
\]
Next, by (\ref{p.sn}), the pole at a nonnegative integer $n$ has order $m+p$, and the residue is
\begin{align*}
&{\rm Res}(\mathcal{G}_2(s),n)\\
&\quad=(-1)^{m+p-1}\binom{m+p+q-2}{q-1}\frac{1}{(n+\tf{1}{2})^{m+p+q-1}}\\
&\quad\quad+(-1)^{m+p-1}\sum_{\substack{i+j=p-1\\i,j\geq 0}}\binom{m+i-1}{i}\binom{q+j-1}{j}
    \frac{(-1)^{i}\ze(m+i)+(-1)^{m}H_n^{(m+i)}}{(n+\tf{1}{2})^{q+j}}\\
&\quad\quad+(-1)^{m+p-1}\sum_{\substack{i+j=m-1\\i,j\geq 0}}\binom{p+i-1}{i}\binom{q+j-1}{j}
    \frac{(-1)^{i}\ze(p+i)+(-1)^{p}H_n^{(p+i)}}{(n+\tf{1}{2})^{q+j}}\,.
\end{align*}
Hence, summing these two contributions, considering the definitions of $\t(s)$ and $R_{p,q}$, and doing some transformations, we obtain the desired formula (\ref{sym.R.expfor}), and therefore the statement.
\end{proof}

Now, let us briefly discuss some special cases of this theorem. Setting $m=p=1$ in Eq. (\ref{sym.R.expfor}) yields an expression of the sums $R_{1,q}$, and an alternate way to obtain this expression is to set $a=-1/2$ in \cite[Theorem 3.2]{Xu19.SEIS}.

\begin{corollary}
For integer $q\geq2$, the linear sums $R_{1,q}$ are reducible to $\ln(2)$ and zeta values:
\begin{equation}\label{R1q}
R_{1,q}=\sum_{n=1}^{\infty}\frac{H_{n-1}}{(n-1/2)^q}
    =\frac{q}{2}\t(q+1)-2\ln(2)\t(q)-\frac{1}{2}\sum_{j=1}^{q-2}\t(q-j)\t(j+1)\,.
\end{equation}
\end{corollary}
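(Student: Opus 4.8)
The plan is to specialize Theorem~\ref{Th.sym.R} to the case $m=p=1$ and simplify. When $m=p=1$, the operators $\la_m$ and $\la_p$ are identities (since $\la_1(\vOm_{p,q})=\vOm_{p,q}$), so the left-hand side of Eq.~(\ref{sym.R.expfor}) collapses to $2R_{1,q}$, while on the right the binomial $\binom{m+p+q-2}{q-1}=\binom{q}{q-1}=q$ and the remaining $\la$-operators also become trivial. Thus one reads off
\[
2R_{1,q}=q\,\t(q+1)+2\cdot(-1)\la_1((-1)\ze(1)\t(q))-\la_q(\t(1)\t(1))-\text{(correction from }\ze(1)\text{ convention)},
\]
and after substituting $\ze(1):=-2\ln(2)$ and $\t(1):=0$ according to the stated conventions, the term $\la_q(\t(1)\t(1))$ must be interpreted as $\la_q(\t(1)\t(1))=\sum_{i+j=q-1}\t(1+i)\t(1+j)$, i.e.\ $\sum_{j=1}^{q-2}\t(q-j)\t(j+1)$ once the boundary terms containing a $\t(1)=0$ factor are discarded. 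Collecting everything and dividing by $2$ yields exactly Eq.~(\ref{R1q}).

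Concretely, the steps I would carry out are: first, write down Eq.~(\ref{sym.R.expfor}) with $m=p=1$ verbatim; second, evaluate each $\la$-operator using $\la_1=\mathrm{id}$ and expand $\la_q(\t(1)\t(1))$ into its defining sum $\sum_{i+j=q-1,\,i,j\geq0}\binom{i}{0}\binom{j}{0}\t(1+i)\t(1+j)=\sum_{j=0}^{q-1}\t(q-j)\t(j+1)$; third, apply the conventions $\t(1)=0$ (which kills the $j=0$ and $j=q-1$ terms, reducing the range to $1\le j\le q-2$) and $\ze(1)=-2\ln(2)$ (which turns the two middle terms $(-1)^1\la_1((-1)^1\ze(1)\t(q))$ appearing with coefficients $(-1)^m$ and $(-1)^p$ into a single contribution $-2\ln(2)\,\t(q)$ after combining and halving); fourth, divide through by $2$ to isolate $R_{1,q}$. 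The alternative verification route—setting $a=-1/2$ in \cite[Theorem 3.2]{Xu19.SEIS}—can be invoked as a sanity check but is not needed for the derivation.

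The only mildly delicate point is bookkeeping of the two ``middle'' terms $(-1)^m\la_p((-1)^m\ze(m)\t(q))$ and $(-1)^p\la_m((-1)^p\ze(p)\t(q))$: at $m=p=1$ these are literally the same expression $\la_1((-1)\ze(1)\t(q))=-\ze(1)\t(q)$, so their sum is $-2\ze(1)\t(q)=4\ln(2)\t(q)$, which after the global division by $2$ contributes $-2\ln(2)\t(q)$ with the correct sign. I do not anticipate any genuine obstacle here; the corollary is a pure specialization, and the ``proof'' amounts to carefully applying the definition of $\la_m$ together with the $\ze(1)$ and $\t(1)$ conventions. If desired, one could even omit a formal proof and simply state that Eq.~(\ref{R1q}) is the $m=p=1$ instance of Eq.~(\ref{sym.R.expfor}).
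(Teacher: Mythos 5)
Your proposal is correct and is exactly the paper's route: the corollary is just the $m=p=1$ specialization of Eq.~(\ref{sym.R.expfor}), using $\la_1=\mathrm{id}$, $\binom{q}{q-1}=q$, expanding $\la_q(\t(1)\t(1))=\sum_{j=0}^{q-1}\t(q-j)\t(j+1)$ with the conventions $\t(1):=0$ and $\ze(1):=-2\ln(2)$, and dividing by $2$ (the paper also notes the alternative of setting $a=-1/2$ in Xu's Theorem~3.2, as you do). One small bookkeeping remark: each middle term $(-1)^m\la_p((-1)^m\ze(m)\t(q))$ already carries the outer sign, so at $m=p=1$ each equals $\ze(1)\t(q)=-2\ln(2)\t(q)$ and their sum is $-4\ln(2)\t(q)$ --- your intermediate ``sum is $4\ln(2)\t(q)$'' drops that outer factor and no separate ``correction from the $\ze(1)$ convention'' is needed, though your final contribution $-2\ln(2)\t(q)$ and the resulting identity are right.
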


\begin{example}
The first few sums are
\begin{align*}
&R_{1,2}=7\ze(3)-\pi^2\ln(2)\,,\\
&R_{1,3}=-14\ln(2)\ze(3)+\tf{1}{8}\pi^4\,,\\
&R_{1,4}=62\ze(5)-\tf{7}{2}\pi^2\ze(3)-\tf{1}{3}\pi^4\ln(2)\,,\\
&R_{1,5}=-62\ln(2)\ze(5)-\tf{49}{2}\ze(3)^2+\tf{1}{12}\pi^6\,,
\end{align*}
which can also be computed from Theorem \ref{Th.sym.R} by replacing $(m,p,q)$ by $(1,1,2)$ -- $(1,1,5)$. The values of more sums can be obtained by specifying the parameter $q$ directly.\hfill\qedsymbol
\end{example}

More generally, setting $m=p\geq 2$ in Eq. (\ref{sym.R.expfor}), we have

\begin{corollary}\label{Coro.la.R}
For integers $p,q\geq2$, the sums $\la_p(R_{p,q})$ are reducible to zeta values:
\begin{align*}
\la_p(R_{p,q})
&=\sum_{\substack{i+j=p-1\\i,j\geq 0}}\binom{p+i-1}{i}\binom{q+j-1}{j}R_{p+i,q+j}\\
&=\frac{(-1)^{p-1}}{2}
    \bibb{\binom{2p+q-2}{q-1}\t(2p+q-1)-\la_q(\t(p)\t(p))}-\la_p((-1)^p\ze(p)\t(q))\,.
\end{align*}
\end{corollary}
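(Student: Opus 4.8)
The plan is to obtain this corollary as the $m=p$ specialization of Theorem~\ref{Th.sym.R}; no new analytic input is required, since the residue computation for $\mathcal{G}_2(s)$ carried out in the proof of that theorem already supplies everything, and what remains is pure bookkeeping.

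First I would set $m=p$ in the explicit formula~(\ref{sym.R.expfor}). On the left-hand side the two summands $(-1)^{m-1}\la_p(R_{m,q})$ and $(-1)^{p-1}\la_m(R_{p,q})$ become identical, so the left-hand side collapses to $2(-1)^{p-1}\la_p(R_{p,q})$. On the right-hand side the binomial prefactor becomes $\binom{2p+q-2}{q-1}$ multiplying $\t(2p+q-1)$, the two middle terms $(-1)^m\la_p((-1)^m\ze(m)\t(q))$ and $(-1)^p\la_m((-1)^p\ze(p)\t(q))$ coincide and together contribute $2(-1)^p\la_p((-1)^p\ze(p)\t(q))$, and the last term is $-\la_q(\t(p)\t(p))$. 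Solving for $\la_p(R_{p,q})$ — that is, multiplying through by $\tfrac{(-1)^{p-1}}{2}$ and using $(-1)^{p-1}(-1)^p=-1$ — yields precisely
\[
\la_p(R_{p,q})=\frac{(-1)^{p-1}}{2}\bibb{\binom{2p+q-2}{q-1}\t(2p+q-1)-\la_q(\t(p)\t(p))}-\la_p((-1)^p\ze(p)\t(q))\,,
\]
which is the asserted identity.

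Finally, to justify the reducibility-to-zeta-values claim I would observe that with $p,q\ge 2$ and summation indices $i,j\ge 0$, every argument of $\t$ or $\ze$ appearing on the right — namely $2p+q-1$, $p+i$, and $q+j$ — is at least $2$; hence the conventions $\ze(1):=-2\ln(2)$ and $\t(1):=0$ are never invoked, so no $\ln(2)$ can survive. Since $\t(s)=(2^s-1)\ze(s)$ for integers $s\ge 2$, the right-hand side is then a $\mathbb{Q}$-linear combination of the single zeta value $\ze(2p+q-1)$ and of products $\ze(a)\ze(b)$ of two zeta values. The only points requiring any care are the sign tracking in the specialization and the (immediate) check that the case $m=p$ cannot generate a weight-one argument, so I do not anticipate a genuine obstacle.
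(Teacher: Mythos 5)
Your proposal is correct and matches the paper's own derivation, which obtains the corollary precisely by setting $m=p$ in Eq.~(\ref{sym.R.expfor}) of Theorem~\ref{Th.sym.R}; your sign bookkeeping ($(-1)^{p-1}(-1)^p=-1$) and the observation that all arguments of $\t$ and $\ze$ stay at least $2$ (so no $\ln(2)$ enters) are exactly the needed details.
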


As illustrated in \cite[Eq. (3.9)]{WangXu20.DTE} and \cite[Corollary 3.2]{Xu20.EFS}, besides the linear sums $R_{1,q}$, the sums $R_{p,q}$ with $p+q$ odd are also reducible to $\ln(2)$ and zeta values. Therefore, we show here some more special cases of Theorem \ref{Th.sym.R} and Corollary \ref{Coro.la.R} with even weights.

\begin{example}
Setting $(m,p,q)$ by $(2,2,3)$, $(2,2,5)$ in Theorem \ref{Th.sym.R} or Corollary \ref{Coro.la.R}, we have
\begin{align*}
&3R_{2,4}+2R_{3,3}
    =3\sum_{n=1}^\infty\frac{H_{n-1}^{(2)}}{(n-1/2)^4}
        +2\sum_{n=1}^\infty\frac{H_{n-1}^{(3)}}{(n-1/2)^3}
    =112\ze(3)^2-\tf{1}{6}\pi^6\,,\\
&5R_{2,6}+2R_{3,5}=1798\ze(3)\ze(5)-\tf{17}{72}\pi^8\,.
\end{align*}
However, by (\ref{Spq.MTV.MZV}) and (\ref{Rpq.MTV.MZV}), it can be found that (alternating) double zeta values appear in the evaluations of the involved $R$-sums:
\begin{align*}
&R_{2,4}=128\ze(\bar{5},1)+\ze(3)^2-\tf{1}{210}\pi^6\,,\\
&R_{3,3}=-192\ze(\bar{5},1)+\tf{109}{2}\ze(3)^2-\tf{8}{105}\pi^6\,,\\
&R_{2,6}=768\zeta(\bar{7},1)+289\ze(6,2)-864\ze(3)\ze(5)+\tf{59}{525}\pi^8\,,\\
&R_{3,5}=-1920\ze(\bar{7},1)-\tf{1445}{2}\ze(6,2)+3059\ze(3)\ze(5)-\tf{2011}{5040}\pi^8\,.
\end{align*}
See also the evaluations in \cite[Example 3.13]{WangXu20.DTE}, which are obtained by colored multiple zeta values. Similarly, more relations can be established. For example, let $(m,p,q)$ by $(3,3,5)$ and $(4,4,5)$. Then we have
\begin{align*}
&5R_{3,7}+5R_{4,6}+2R_{5,5}=-3810\ze(3)\ze(7)-5704\ze(5)^2+\tf{31}{270}\pi^{10}\,,\\
&7R_{4,8}+12R_{5,7}+10R_{6,6}+4R_{7,5}=64640\ze(5)\ze(7)-\tf{691}{9450}\pi^{12}\,,
\end{align*}
respectively.\hfill\qedsymbol
\end{example}

\begin{example}
Finally, we present another two special cases:
\begin{align*}
&9R_{2,10}+2R_{3,9}=58254\ze(3)\ze(9)+94488\ze(5)\ze(7)-\tf{691}{3780}\pi^{12}\,,\\
&42R_{4,10}+56R_{5,9}+35R_{6,8}+10R_{7,7}
    =1802808\ze(5)\ze(9)+1614170\ze(7)^2-\tf{5461}{14175}\pi^{14}\,,
\end{align*}
which correspond to the cases of $(m,p,q)=(2,3,8),(4,5,6)$ of Theorem \ref{Th.sym.R}, respectively.\hfill\qedsymbol
\end{example}


\subsection{Symmetric sum on double $T$-values}

By substituting the relations (\ref{Rpq.MTV.MZV}), we can use Theorem \ref{Th.sym.R} to establish a symmetric extension of the Kaneko-Tsumura conjecture (\ref{KT.conj}) on the double $T$-values.

\begin{theorem}\label{Th.KTC.T}
For integers $m,p,q\geq 2$, we have
\[
(-1)^m\la_p(T(m,q))+(-1)^p\la_m(T(p,q))\in\mathbb{Q}[\mathrm{zeta\ values}].
\]
In particular, the following expression holds:
\begin{align}
&2^{m+p+q-3}\{(-1)^m\la_p(T(m,q))+(-1)^p\la_m(T(p,q))\}\nonumber\\
&\quad=\binom{m+p+q-2}{q-1}\t(m+p+q-1)
    +(-1)^m\la_p((1+(-1)^m)\ze(m)\t(q))\nonumber\\
&\quad\quad+(-1)^p\la_m((1+(-1)^p)\ze(p)\t(q))
    -\la_q(\t(m)\t(p))\,.\label{sym.T.expfor}
\end{align}
\end{theorem}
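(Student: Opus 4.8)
The plan is to derive Theorem~\ref{Th.KTC.T} as a direct corollary of Theorem~\ref{Th.sym.R} by substituting the relation~(\ref{Rpq.MTV.MZV}), namely $R_{p,q}=\ze(p)\t(q)-2^{p+q-2}T(p,q)$ for $p,q\geq 2$, into the explicit expression~(\ref{sym.R.expfor}). First I would observe that since $m,p,q\geq 2$, every index appearing in the operators $\la_p(R_{m,q})$ and $\la_m(R_{p,q})$ is at least $2$, so the substitution of~(\ref{Rpq.MTV.MZV}) is valid termwise: for instance, $\la_p(R_{m,q})=\la_p(\ze(m)\t(q))-2^{?}\la_p(2^{m+q-2}T(m,q))$, where I must track the power of $2$ carefully since $2^{m+i+q+j-2}=2^{m+p+q-3}\cdot 2^{(i+j)-(p-1)}\cdot 2^{?}$ — actually, with $i+j=p-1$ one has $m+i+q+j-2 = m+p+q-3$, so the power is constant across the sum and $\la_p(2^{m+q-2}T(m,q)) = 2^{m+p+q-3}\la_p(T(m,q))$ after pulling the common factor out of the binomial-weighted sum. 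The same computation applies to $\la_m(R_{p,q})$, giving the uniform factor $2^{m+p+q-3}$ on both $T$-terms.

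Next I would carry out the algebraic rearrangement: starting from~(\ref{sym.R.expfor}), replace $(-1)^{m-1}\la_p(R_{m,q})+(-1)^{p-1}\la_m(R_{p,q})$ by
\[
(-1)^{m-1}\la_p(\ze(m)\t(q))+(-1)^{p-1}\la_m(\ze(p)\t(q))
 -2^{m+p+q-3}\{(-1)^{m-1}\la_p(T(m,q))+(-1)^{p-1}\la_m(T(p,q))\}.
\]
Moving the two $\la(\ze\cdot\t)$ terms to the right-hand side and multiplying through by $-1$ turns $(-1)^{m-1}$ into $(-1)^m$, yielding the left side of~(\ref{sym.T.expfor}). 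On the right, the term $(-1)^m\la_p((-1)^m\ze(m)\t(q))$ from~(\ref{sym.R.expfor}) combines with the transplanted $-(-1)^{m-1}\la_p(\ze(m)\t(q)) = (-1)^m\la_p(\ze(m)\t(q))$ to produce $(-1)^m\la_p((1+(-1)^m)\ze(m)\t(q))$, and symmetrically for the $p$-term; the $\binom{m+p+q-2}{q-1}\t(m+p+q-1)$ and $-\la_q(\t(m)\t(p))$ terms carry over unchanged. This gives~(\ref{sym.T.expfor}) exactly.

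Finally, for the membership claim $(-1)^m\la_p(T(m,q))+(-1)^p\la_m(T(p,q))\in\mathbb{Q}[\mathrm{zeta\ values}]$, I would note that dividing~(\ref{sym.T.expfor}) by $2^{m+p+q-3}$ expresses the symmetric combination in terms of $\t$-values at various arguments and products thereof; since each $\t(s)=(2^s-1)\ze(s)$ for $s\geq 2$ lies in $\mathbb{Q}[\mathrm{zeta\ values}]$, and since $m,p,q\geq 2$ forces every argument $m+p+q-1$, $q+j$, $p+i$, etc.\ appearing in these $\t$-values to be at least $2$ (no argument equals $1$, so the conventions $\ze(1):=-2\ln(2)$, $\t(1):=0$ are never invoked, and in particular $\ln(2)$ does not appear), the entire right-hand side lies in $\mathbb{Q}[\mathrm{zeta\ values}]$. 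The main obstacle — really the only point requiring care — is the bookkeeping of the powers of $2$ and of the signs $(-1)^{m-1}$ versus $(-1)^m$ when merging the $\ze\cdot\t$ contributions; once that is handled, everything follows mechanically from Theorem~\ref{Th.sym.R} and~(\ref{Rpq.MTV.MZV}).
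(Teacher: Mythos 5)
Your proposal is correct and is exactly the paper's route: the paper obtains Theorem~\ref{Th.KTC.T} by substituting $R_{p,q}=\ze(p)\t(q)-2^{p+q-2}T(p,q)$ from~(\ref{Rpq.MTV.MZV}) into~(\ref{sym.R.expfor}), using precisely the observation that $m+i+q+j-2=m+p+q-3$ is constant over $i+j=p-1$ so the power of $2$ factors out, and merging the $\ze\cdot\t$ contributions into $(1+(-1)^m)$ and $(1+(-1)^p)$ terms. Your sign bookkeeping and the remark that $m,p,q\geq 2$ keeps every argument at least $2$ (so $\ln(2)$ never enters and the membership in $\mathbb{Q}[\mathrm{zeta\ values}]$ follows from $\t(s)=(2^s-1)\ze(s)$) are both accurate.
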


Taking $m=p$ in Theorem \ref{Th.KTC.T} further yields the next corollary.

\begin{corollary}
For integers $p,q\geq 2$, the sums $\la_p(T(p,q))$ are reducible to zeta values:
\begin{align*}
\la_p(T(p,q))&=\frac{(-1)^p}{2^{2p+q-2}}\binom{2p+q-2}{q-1}\t(2p+q-1)
    +\frac{1}{2^{2p+q-3}}\la_p((1+(-1)^p)\ze(p)\t(q))\\
    &\quad-\frac{(-1)^p}{2^{2p+q-2}}\la_q(\t(p)\t(p))\,.
\end{align*}
\end{corollary}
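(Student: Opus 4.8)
The plan is to derive the corollary directly from Theorem~\ref{Th.KTC.T} by specializing $m=p$. First I would set $m=p$ in the expression~(\ref{sym.T.expfor}). The left-hand side becomes
\[
2^{2p+q-3}\{(-1)^p\la_p(T(p,q))+(-1)^p\la_p(T(p,q))\}
    =2^{2p+q-2}(-1)^p\la_p(T(p,q))\,,
\]
so dividing through by $2^{2p+q-2}(-1)^p=(-1)^p2^{2p+q-2}$ will isolate $\la_p(T(p,q))$. On the right-hand side of~(\ref{sym.T.expfor}), the binomial coefficient $\binom{m+p+q-2}{q-1}$ becomes $\binom{2p+q-2}{q-1}$ and the argument $\t(m+p+q-1)$ becomes $\t(2p+q-1)$; the two middle terms $(-1)^m\la_p((1+(-1)^m)\ze(m)\t(q))$ and $(-1)^p\la_m((1+(-1)^p)\ze(p)\t(q))$ coincide and combine into $2(-1)^p\la_p((1+(-1)^p)\ze(p)\t(q))$; and the last term $\la_q(\t(m)\t(p))$ becomes $\la_q(\t(p)\t(p))$.

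Next I would carry out the division. Multiplying each term on the right by $(-1)^p/2^{2p+q-2}$ (noting $(-1)^p\cdot(-1)^p=1$ for the middle term and that a single factor $(-1)^p$ remains for the first and last terms), I obtain
\[
\la_p(T(p,q))
=\frac{(-1)^p}{2^{2p+q-2}}\binom{2p+q-2}{q-1}\t(2p+q-1)
+\frac{2}{2^{2p+q-2}}\la_p((1+(-1)^p)\ze(p)\t(q))
-\frac{(-1)^p}{2^{2p+q-2}}\la_q(\t(p)\t(p))\,,
\]
and since $2/2^{2p+q-2}=1/2^{2p+q-3}$, the middle coefficient matches the stated $1/2^{2p+q-3}$. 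This reproduces the displayed formula in the corollary exactly.

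Finally, for the membership claim, I would invoke Theorem~\ref{Th.KTC.T} itself: for $m=p\geq2$ and $q\geq2$ the symmetric combination $(-1)^p\la_p(T(p,q))+(-1)^p\la_p(T(p,q))=2(-1)^p\la_p(T(p,q))$ lies in $\mathbb{Q}[\mathrm{zeta\ values}]$, hence so does $\la_p(T(p,q))$; alternatively this is already transparent from the explicit formula just derived, since $\t$ at integer arguments $\geq2$ is a $\mathbb{Q}$-linear combination of zeta values (via $\t(s)=(2^s-1)\ze(s)$) and $\ze(p)$ for $p\geq2$ is a zeta value. There is no real obstacle here; the only point requiring mild care is bookkeeping of the sign factors and the power of $2$ when dividing, and checking that the two middle terms genuinely coincide under $m=p$ (they do, since each is literally the same expression with $m$ and $p$ swapped).
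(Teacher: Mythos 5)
Your proposal is correct and is exactly the paper's route: the paper obtains this corollary by taking $m=p$ in Theorem \ref{Th.KTC.T} (Eq. (\ref{sym.T.expfor})), and your bookkeeping of the factor $2^{2p+q-2}(-1)^p$ and of the two coinciding middle terms matches the stated formula. The membership claim is also handled as in the paper, since with $p,q\geq2$ all arguments of $\t$ and $\ze$ appearing on the right are at least $2$, so no $\ln(2)$ terms arise.
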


By specifying the parameters, a series of relations on double $T$-values can be established.

\begin{example}
For example, in Theorem \ref{Th.sym.R}, replacing $(m,p,q)$ by $(5,4,2)$, $(5,6,2)$ gives two relations on the linear $R$-sums:
\begin{align*}
&5R_{4,6}+12R_{5,5}+15R_{6,4}+10R_{7,3}
    =70\ze(3)\ze(7)+4216\ze(5)^2-\tf{31}{630}\pi^{10}\,,\\
&3R_{5,7}+10R_{6,6}+18R_{7,5}+21R_{8,4}+14R_{9,3}
    =-10872\ze(5)\ze(7)+98\ze(3)\ze(9)+\tf{691}{56700}\pi^{12}\,;
\end{align*}
while for the double $T$-values, substituting the same parameters into Theorem \ref{Th.KTC.T} yields
\begin{align*}
&5T(4,6)+12T(5,5)+15T(6,4)+10T(7,3)=-\tf{961}{64}\ze(5)^2+\tf{1}{4608}\pi^{10}\,,\\
&3T(5,7)+10T(6,6)+18T(7,5)+21T(8,4)+14T(9,3)
    =\tf{11811}{1024}\ze(5)\ze(7)-\tf{1}{92160}\pi^{12}\,,
\end{align*}
respectively.\hfill\qedsymbol
\end{example}


\section{Proof of the required convolution identity}\label{Sec.con.id}

In this section, we give the proof of the convolution identity (\ref{id.BG.GG}) on the Bernoulli numbers and Genocchi numbers used in the discussions of Section \ref{Sec.m.TSqod}.

Firstly, it can be found that there are polynomials $P_n(y)$ of degree $n+1$ with integer coefficients, so that
\[
P_n(\tanh(t))=\mathcal{D}_t^{n}\tanh(t)\,,\quad\text{for } n=0,1,2,\ldots,
\]
where $\mathcal{D}_t$ is the derivative operator defined by $\mathcal{D}_tf(t)=f'(t)$. In particular,
\[
P_0(y)=y\,,\quad
P_1(y)=1-y^2\,,\quad
P_2(y)=-2y+2y^3\,,\quad
P_3(y)=-2+8y^2-6y^4\,,
\]
and
\[
P_{n+1}(y)=(1-y^2)P_n'(y)\,,\quad\text{for } n\geq 0\,.
\]
These polynomials are called the \emph{derivative polynomials of hyperbolic tangent}. Note that the higher derivatives of $\coth(t)$ are formed in the same pattern as those of $\tanh(t)$. Therefore, $P_n(y)$ are also the derivative polynomials of $\coth(t)$.

The concepts of derivative polynomials for tangent and secant were introduced by Hoffman \cite{Hoff95.DP,Hoff99.DP}, but the study of these polynomials goes back to Knuth and Buckholtz \cite{KnuBu67}, and Krishnamachary and Bhimasena Rao \cite{KriBR24}. More results on these polynomials as well as their hyperbolic analogs can be found in, for example, the papers due to Boyadzhiev \cite{Boy07}, Chu and Wang \cite{ChuWang10.CF}, Cvijovi\'{c} \cite{Cvij09.DP}, Hetyei \cite{Hety08}, and Ma \cite{Ma12.DP}.

Next, from the series expansion of the hyperbolic tangent, we have
\[
\tanh(t)=\sum_{n=1}^{\infty}\frac{2^{2n}(2^{2n}-1)B_{2n}t^{2n-1}}{(2n)!}
=-\sum_{n=1}^{\infty}G_{2n}\frac{(2t)^{2n-1}}{(2n)!}
    =1-\sum_{k=0}^{\infty}\frac{G_{k+1}}{k+1}\frac{(2t)^k}{k!}\,,
\]
which further gives
\begin{equation}\label{Pn}
P_n(\tanh(t))=-\sum_{k=0}^{\infty}\frac{G_{k+n+1}}{k+n+1}\frac{2^{k+n}t^k}{k!}+\de_{n,0}\,,
\end{equation}
where $\de_{n,k}$ is the Kronecker delta. Chu and Wang \cite[Section 2.4]{ChuWang10.CF} used the method of linearization to establish the expansion
\[
P_m(y)P_n(y)=-\rho_{m,n}^{(0)}P_{m+n+1}(y)
    -\sum_{k=1}^{[(m+n)/2]}\rho_{m,n}^{(k)}\frac{2^{2k}B_{2k}}{2k}P_{m+n+1-2k}(y)
    +\chi(m=n=0)\,,
\]
where $\chi$ is used in place of the Iverson bracket, and defined by
\[
\chi(\text{true})=1\quad\text{and}\quad\chi(\text{false})=0\,,
\]
and
\begin{equation}\label{rho.mnk}
\rho_{m,n}^{(k)}=\left\{
    \begin{array}{cc}
        \ds(-1)^m\binom{n}{m+n+1-2k}+(-1)^n\binom{m}{m+n+1-2k}\,,&k\geq 1\,,\\[2ex]
        \ds\frac{m!n!}{(m+n+1)!}\,,&k=0\,,
    \end{array}
\right.
\end{equation}
with $m,n\geq0$. Then
\begin{align}
&[t^l]P_m(\tanh(t))P_n(\tanh(t))\nonumber\\
&\quad=\frac{2^{l+m+n+1}}{l!}\bibb{
    \rho_{m,n}^{(0)}\frac{G_{l+m+n+2}}{l+m+n+2}
    +\sum_{k=1}^{[(m+n)/2]}\rho_{m,n}^{(k)}\frac{B_{2k}G_{l+m+n-2k+2}}
        {(2k)(l+m+n-2k+2)}}\nonumber\\
&\quad\quad+\chi(l=m=n=0)\,,\label{PmPn}
\end{align}
and the following theorem can be established.

\begin{theorem}\label{Th.id.eGG.eBG}
For integers $n,\al,\ga\geq 0$ and $\de,\vep=0,1$, there holds the convolution identity
\begin{align*}
&\sum_{k=0}^n\binom{2n+2-\de-\vep}{2k+1-\de}
    \frac{G_{2k+2\al+2}}{k+\al+1}\frac{G_{2n-2k+2\ga+2}}{n-k+\ga+1}\\
&\quad=4\rho_{2\al+\de,2\ga+\vep}^{(0)}\frac{G_{2n+2\al+2\ga+4}}{n+\al+\ga+2}
    +2\sum_{k=1}^{\al+\ga+[(\de+\vep)/2]}\rho_{2\al+\de,2\ga+\vep}^{(k)}
    \frac{B_{2k}}{k}\frac{G_{2n-2k+2\al+2\ga+4}}{n-k+\al+\ga+2}\,,
\end{align*}
where $\rho_{m,n}^{(k)}$ is defined in (\ref{rho.mnk}).
\end{theorem}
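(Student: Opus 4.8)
The plan is to prove the identity by comparing, on both sides of \eqref{PmPn}, the coefficient of a single power of $t$ for the specific choice $m=2\al+\de$ and $n=2\ga+\vep$. Concretely, I would set
\[
m=2\al+\de\,,\qquad n=2\ga+\vep\,,\qquad l=2n+2-\de-\vep
\]
in \eqref{PmPn}. (Here I am reusing the symbol $n$ for two different roles, so in the actual write-up I would rename the summation variable in \eqref{PmPn} to avoid collision.) The point of this choice is parity bookkeeping: $l+m+n = 2n+2\al+2\ga+2$ is even, so every Genocchi number $G_{l+m+n+2}$, $G_{l+m+n-2k+2}$ appearing on the right of \eqref{PmPn} has even index and hence need not vanish, whereas on the left the product $P_m(\tanh t)P_n(\tanh t)$ is to be expanded via \eqref{Pn} and its coefficients read off.

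The main computation is the left-hand side. Writing $P_m(\tanh t)=-\sum_{a\ge 0}\frac{G_{a+m+1}}{a+m+1}\frac{2^{a+m}t^a}{a!}+\de_{m,0}$ and similarly for $P_n$, the coefficient $[t^l]$ of the product is a Cauchy convolution
\[
[t^l]P_m(\tanh t)P_n(\tanh t)
=\sum_{a+b=l}\frac{1}{a!\,b!}\,\frac{G_{a+m+1}}{a+m+1}\,\frac{G_{b+n+1}}{b+n+1}\,2^{a+m}2^{b+n}
\]
plus $\de_{m,0}$ or $\de_{n,0}$ correction terms, which I will track separately. The sum over $a+b=l$ naturally reindexes: with $l=2n+2-\de-\vep$, $m=2\al+\de$, $n=2\ga+\vep$, the index $a+m+1$ runs over $2k+2\al+2$-type values once $a$ is written as $a=2k+1-\de$, matching the left-hand side of the target identity; the binomial $\binom{2n+2-\de-\vep}{2k+1-\de}$ is precisely $\frac{l!}{a!\,b!}$ after the reindexing. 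One then multiplies the whole coefficient identity \eqref{PmPn} through by $l!/2^{l+m+n+1}$ to clear the factorials and the power of two, which is exactly the normalization that turns the $\rho^{(0)}$ and $\rho^{(k)}$ terms on the right of \eqref{PmPn} into the right-hand side of the theorem (the factor $4$ and the factor $2$, together with $B_{2k}/k$ rather than $B_{2k}/(2k)$, come out of this bookkeeping). The upper limit of the inner sum, $\al+\ga+[(\de+\vep)/2]$, is just $[(m+n)/2]$ rewritten.

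The step I expect to be the main obstacle is \emph{the parity and range analysis of the Kronecker-delta / Iverson-bracket correction terms}, i.e. making sure that the $\de_{m,0}$, $\de_{n,0}$ corrections from \eqref{Pn} and the $\chi(l=m=n=0)$ term in \eqref{PmPn} either cancel or are correctly absorbed. Since $m=2\al+\de$ vanishes only when $\al=\de=0$ (and similarly for $n$), and since $l=2n+2-\de-\vep\ge 2-\de-\vep$ is never $0$ under the stated hypotheses, the $\chi(l=m=n=0)$ term is always $0$ and the $\de_{m,0}\cdot P_n$, $\de_{n,0}\cdot P_m$ cross terms contribute only through finitely many boundary coefficients whose contributions must be checked to match (or to be already included in) the convolution written above; once that is verified the identity is purely formal. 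Apart from that, everything is a direct substitution into \eqref{Pn}, \eqref{PmPn}, and \eqref{rho.mnk}, followed by clearing denominators — no new idea beyond the coefficient-extraction is needed.
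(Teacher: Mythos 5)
Your proposal is correct and takes essentially the same route as the paper, which likewise performs the parity/Kronecker-delta reduction of (\ref{Pn}) to get $P_{2\al+\de}(\tanh t)=-\sum_{k\ge 0}\frac{G_{2k+2\al+2}}{k+\al+1}\frac{2^{2k+2\al}t^{2k+1-\de}}{(2k+1-\de)!}$ and then reads off $[t^{2n+2-\de-\vep}]$ of the product via (\ref{PmPn}), noting $\chi(2n+2-\de-\vep=2\al+\de=2\ga+\vep=0)=0$. The only minor slip is your claim that $l=2n+2-\de-\vep$ is never $0$ (it equals $0$ when $n=0$ and $\de=\vep=1$), but the $\chi$ term still vanishes in that case because $m=2\al+1\neq 0$, so your argument goes through unchanged.
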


\begin{proof}
Using Eq. (\ref{Pn}), and doing some elementary reduction, we have
\[
P_{2\al+\de}(\tanh(t))=-\sum_{k=0}^{\infty}\frac{G_{2k+2\al+2}}{k+\al+1}
    \frac{2^{2k+2\al}t^{2k+1-\de}}{(2k+1-\de)!}\,,
\]
for $\al\geq0$ and $\de=0,1$. Applying Eq. (\ref{PmPn}) to
$[t^{2n+2-\de-\vep}]P_{2\al+\de}(\tanh(t))P_{2\ga+\vep}(\tanh(t))$,
and considering the above expansion as well as the fact
\[
\chi(2n+2-\de-\vep=2\al+\de=2\ga+\vep=0)=0\,,
\]
we obtain the desired convolution formula.
\end{proof}

Finally, the identity (\ref{id.BG.GG}) can be verified by combining Theorem \ref{Th.id.eGG.eBG} with Eq. (\ref{PmPn}).

\begin{theorem}\label{Th.id.BG.GG}
For integers $n\geq 0$ and $q\geq 2$, there holds the convolution identity
\[
\sum_{i=0}^{q-1}\binom{q-1}{i}\frac{B_{q+i}G_{2n+q-i}}{(q+i)(2n+q-i)}
    +\frac{1}{4}\sum_{i=0}^{2n}(-1)^i\binom{2n}{i}
        \frac{G_{q+i}G_{2n+q-i}}{(q+i)(2n+q-i)}
=\frac{(-1)^q}{q\binom{2q}{q}}\frac{G_{2n+2q}}{2n+2q}\,.
\]
\end{theorem}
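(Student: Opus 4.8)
The plan is to derive identity (\ref{id.BG.GG}) --- equivalently the symmetric-looking identity of Theorem \ref{Th.id.BG.GG} --- as a specialization of the master convolution formula in Theorem \ref{Th.id.eGG.eBG} together with the coefficient expansion (\ref{PmPn}). The key observation is that the left-hand side of Theorem \ref{Th.id.BG.GG} involves \emph{two} different types of convolutions: a $B$--$G$ convolution with binomial weights $\binom{q-1}{i}$ (no sign) and a $G$--$G$ convolution with alternating binomial weights $(-1)^i\binom{2n}{i}$; the right-hand side is a single isolated $G$-value. The $B$--$G$ sum is precisely the shape that appears on the right of (\ref{PmPn}) inside the $\sum_{k\geq 1}$ term, while the alternating $G$--$G$ sum is what Theorem \ref{Th.id.eGG.eBG} produces on its left-hand side (the alternating sign arising from evaluating $P_m(\tanh t)$ as a power series and reading off a coefficient, which effectively substitutes $t\to -t$ in one factor). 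So the strategy is to pick the parameters in Theorem \ref{Th.id.eGG.eBG} so that its left side reproduces (four times) the alternating $G$--$G$ sum in Theorem \ref{Th.id.BG.GG}, and then recognize the right side of Theorem \ref{Th.id.eGG.eBG} --- a $\rho^{(0)}$ term plus a $\sum_{k\geq1}\rho^{(k)} B_{2k}/k \cdot G/(\cdots)$ sum --- as exactly the other two pieces (the $B$--$G$ sum and the lone $G_{2n+2q}$ term), after using (\ref{PmPn}) to identify the $B$--$G$ sum with a single coefficient $[t^{l}]P_m(\tanh t)P_n(\tanh t)$.

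\textbf{Choice of parameters and execution.} Concretely, I would set $\al=\ga=0$ and choose $\de,\vep$ and $n$ so that the total weight matches $2n+2q$ and the upper binomial $\binom{2n+2-\de-\vep}{2k+1-\de}$ becomes $\binom{2n}{i}$ with $i=2k+1-\de$ running over the right residue class. Taking $\de=\vep=1$ gives $\binom{2n}{2k}$ (even $i$), and $\de=\vep=0$ gives $\binom{2n+2}{2k+1}$; matching against $\sum_{i=0}^{2n}(-1)^i\binom{2n}{i}$ will likely require combining the $\de=\vep=1$ instance with a shifted $\de=\vep=0$ instance, or more simply choosing $\al=0,\ \ga=0,\ \de=\vep=1$ and relabeling. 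After fixing the parameters, the first step is to write out $P_{q-1}(\tanh t)$ (if $q-1$ is the relevant index) or the appropriate $P_m,P_n$ via the reduced formula in the proof of Theorem \ref{Th.id.eGG.eBG}, namely $P_{2\al+\de}(\tanh t) = -\sum_k \frac{G_{2k+2\al+2}}{k+\al+1}\frac{2^{2k+2\al}t^{2k+1-\de}}{(2k+1-\de)!}$. The second step is to extract the coefficient $[t^{2n+2q-2}]$ (or whatever exponent produces total weight $2n+2q$) from $P_{m}(\tanh t)P_{n}(\tanh t)$ in two ways: once by Cauchy-multiplying the two power series (producing the alternating $G$--$G$ convolution), and once by the linearization formula (\ref{PmPn}) (producing the $\rho^{(0)} G$ term and the $\rho^{(k)} B_{2k} G$ sum). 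The third step is to recognize, using (\ref{rho.mnk}) with $m=n$ or $m=n\pm1$, that $\rho^{(0)}_{m,n}=\frac{m!n!}{(m+n+1)!}$ collapses to $\frac{1}{q\binom{2q}{q}}$ up to a power of $2$ and a sign, matching the isolated term $\frac{(-1)^q}{q\binom{2q}{q}}\frac{G_{2n+2q}}{2n+2q}$; and that the $\rho^{(k)}$ with the binomial coefficients $\binom{q-1}{m+n+1-2k}$ reindex to the $\binom{q-1}{i}$ in the $B$--$G$ sum. Finally, clear the common powers of $2$ and factorials, divide by $4$, and rearrange to the stated form.

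\textbf{Main obstacle.} The routine parts --- the power-series Cauchy product, the bookkeeping of factorials and powers of $2$, the parity case-split $B_{2k+1}=G_{2k+1}=0$ --- are mechanical. The real difficulty is the \emph{combinatorial bookkeeping of indices and signs}: making the upper limit $\al+\ga+[(\de+\vep)/2]$ of the $\rho^{(k)}$-sum in Theorem \ref{Th.id.eGG.eBG} line up with the summation range $0\le i\le q-1$ in the $B$--$G$ sum after the substitution $i=m+n+1-2k$, and correctly tracking the sign $(-1)^i$ that must emerge in the $G$--$G$ convolution. A secondary subtlety is that (\ref{id.BG.GG}) as displayed in the introduction carries a factor $-\tfrac14$ in front of the $G$--$G$ sum and an overall sign arrangement that differs cosmetically from Theorem \ref{Th.id.BG.GG}; one must check these are the same identity (they are, after moving the $G$--$G$ term across and noting $(-1)^i$ absorbs a sign). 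I expect that once the parameter dictionary between Theorem \ref{Th.id.eGG.eBG} and Theorem \ref{Th.id.BG.GG} is pinned down, the rest is a short verification.
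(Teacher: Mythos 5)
Your overall strategy is the same as the paper's: Theorem \ref{Th.id.BG.GG} is indeed proved there purely by specializing Theorem \ref{Th.id.eGG.eBG}. But your concrete parameter dictionary is wrong, and the dictionary is exactly where the content lies. In Theorem \ref{Th.id.eGG.eBG} the \emph{offset} of the Genocchi indices is controlled by $\al$ and $\ga$ (the summand is $G_{2k+2\al+2}G_{2n-2k+2\ga+2}$), while $n$ only controls the length of the convolution and the upper entry of the binomial coefficient. Hence your choice $\al=\ga=0$ pins the convolution to start at $G_2$ and can only reproduce the case $q=2$; compensating by shifting $n$ (your ``relabeling'') would change the binomial to $\binom{2n+2q-4}{\cdot}$ rather than $\binom{2n}{i}$ and drag in terms such as $G_2G_{2n+2q-2}$ that do not occur in the target sum. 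The specialization that works ties $\al,\ga$ to $q$ (in effect $q=2\al+\de+1$ with $\al=\ga$, $\de=\vep$): for even $q=2l+2$ take $\al=\ga=l$, $\de=\vep=1$, giving the binomial $\binom{2n}{2k}$, and for odd $q=2l+3$ take $\al=\ga=l+1$, $\de=\vep=0$, where matching the total weight forces the theorem's $n$ to be replaced by $n-1$, giving $\binom{2n}{2k+1}$. Note also that there is no genuine alternation to track: since $G_{2k+1}=0$ for $k\geq1$, only the terms with $q+i$ even survive, so $(-1)^i$ is constant ($+1$ for even $q$, $-1$ for odd $q$); and the $\rho^{(k)}$-sum on the right of Theorem \ref{Th.id.eGG.eBG} already \emph{is} the $B$--$G$ convolution of Theorem \ref{Th.id.BG.GG} after a shift of the summation index, so no second appeal to (\ref{PmPn}) is needed at that stage.

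The more substantive gap is the boundary case $q$ odd, $n=0$, which your plan does not cover at all: since the odd-$q$ specialization uses $n-1$ in place of $n$, it yields nothing when $n=0$. The paper closes this case separately: because $G_{2l+3}=0$, Eq. (\ref{Pn}) gives $[t^0]P_{2l+2}(\tanh t)=0$, hence $[t^0]P_{2l+2}(\tanh t)^2=0$, and comparing this with the linearization expansion (\ref{PmPn}) for $m=n=2l+2$ produces exactly the missing identity $\sum_{k=0}^{l}\binom{2l+2}{2k+1}\frac{B_{2l+2k+4}G_{2l-2k+2}}{(l+k+2)(l-k+1)}=-\frac{2}{(2l+3)\binom{4l+6}{2l+3}}\cdot\frac{G_{4l+6}}{2l+3}$, i.e.\ the $q=2l+3$, $n=0$ instance of the theorem. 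Until the parameter dictionary is corrected as above and this extra case is supplied, your proposal is a plausible plan rather than a proof.
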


\begin{proof}
Let us prove Theorem \ref{Th.id.BG.GG} according to the parity of the parameter $q$. In Theorem \ref{Th.id.eGG.eBG}, setting $\al=\ga=l$ with $l\geq0$ and $\de=\vep=1$, we have
\begin{align*}
&\sum_{k=0}^n\binom{2n}{2k}\frac{G_{2k+2l+2}G_{2n-2k+2l+2}}{(k+l+1)(n-k+l+1)}\\
&\quad=\frac{4}{(4l+3)\binom{4l+2}{2l+1}}\frac{G_{2n+4l+4}}{n+2l+2}
    -4\sum_{k=1}^{2l+1}\binom{2l+1}{4l+3-2k}\frac{B_{2k}G_{2n-2k+4l+4}}{k(n-k+2l+2)}\\
&\quad=\frac{4}{(l+1)\binom{4l+4}{2l+2}}\frac{G_{2n+4l+4}}{n+2l+2}
    -4\sum_{k=0}^{l}\binom{2l+1}{2k}\frac{B_{2k+2l+2}G_{2n-2k+2l+2}}{(k+l+1)(n-k+l+1)}\,,
\end{align*}
for $n,l\geq0$, which is just the $q=2l+2$ case of Theorem \ref{Th.id.BG.GG}. Similarly, in Theorem \ref{Th.id.eGG.eBG}, let $\al=\ga=l+1$ with $l\geq0$ and $\de=\vep=0$, and do some transformation. Then we can show that when $q=2l+3$ and $n\geq 1$, Theorem \ref{Th.id.BG.GG} is still true. Hence, it suffices to show that when $q=2l+3$ and $n=0$, the result holds. In fact, according to (\ref{Pn}), we have
\[
[t^0]P_n(\tanh(t))=-\frac{2^nG_{n+1}}{n+1}+\de_{n,0}\,.
\]
Then by setting $m=n=2l+2$, for $l\geq 0$, we obtain from (\ref{PmPn}) that
\[
[t^0]P_{2l+2}(\tanh(t))^2=0
=2^{4l+5}\bibb{\rho_{2l+2,2l+2}^{(0)}\frac{G_{4l+6}}{4l+6}
    +\sum_{k=1}^{2l+2}\rho_{2l+2,2l+2}^{(k)}\frac{B_{2k}G_{4l-2k+6}}{(2k)(4l-2k+6)}}\,.
\]
This further gives
\begin{equation}
\sum_{k=0}^l\binom{2l+2}{2k+1}\frac{B_{2l+2k+4}G_{2l-2k+2}}{(l+k+2)(l-k+1)}
    =-\frac{2}{(2l+3)\binom{4l+6}{2l+3}}\frac{G_{4l+6}}{2l+3}\,,
\end{equation}
which is just the $q=2l+3$ and $n=0$ case of the desired result.
\end{proof}

\begin{example}
In Theorem \ref{Th.id.BG.GG}, setting further $n=0,1$ gives
\begin{align*}
&\sum_{i=0}^{q-1}\binom{q-1}{i}\frac{B_{q+i}G_{q-i}}{(q+i)(q-i)}
    =\frac{(-1)^q}{q\binom{2q}{q}}\frac{G_{2q}}{2q}
    -\frac{G_q^2}{4q^2}\,,\\
&\sum_{i=0}^{q-1}\binom{q-1}{i}\frac{B_{q+i}G_{q+2-i}}{(q+i)(q+2-i)}
    =\frac{(-1)^q}{q\binom{2q}{q}}\frac{G_{2q+2}}{2q+2}
    -\frac{G_qG_{q+2}}{2q(q+2)}+\frac{G_{q+1}^2}{2(q+1)^2}\,,
\end{align*}
for $q\geq 2$; while setting $q=3,4$ yields
\begin{align*}
&\sum_{i=0}^{2n}(-1)^i\binom{2n}{i}\frac{G_{i+3}G_{2n+3-i}}{(i+3)(2n+3-i)}
    =-\frac{G_{2n+6}}{15(2n+6)}+\frac{G_{2n+2}}{15(2n+2)}\,,\\
&\sum_{i=0}^{2n}(-1)^i\binom{2n}{i}\frac{G_{i+4}G_{2n+4-i}}{(i+4)(2n+4-i)}
    =\frac{G_{2n+8}}{70(2n+8)}+\frac{G_{2n+4}}{30(2n+4)}-\frac{G_{2n+2}}{21(2n+2)}\,,
\end{align*}
for $n\geq 0$. More special cases can be obtained from Theorems \ref{Th.id.eGG.eBG} and \ref{Th.id.BG.GG} by specifying the parameters.\hfill\qedsymbol
\end{example}

For various other convolution identities on the Bernoulli numbers (polynomials), Euler numbers (polynomials) and Genocchi numbers, the readers are referred to, for example, the works of Agoh and Dilcher \cite{AgDil07.CI}, Chu and Wang \cite{ChuWang10.CF}, Gessel \cite{Gess05}, Pan and Sun \cite{PanSun06}, and some further generalizations of their results.


\section*{Acknowledgments}
\addcontentsline{toc}{section}{Acknowledgments}

The first author Weiping Wang is supported by the National Natural Science Foundation of China (under Grant 11671360). The corresponding author Ce Xu is supported by the National Natural Science Foundation of China (under Grant 12101008) and the Scientific Research Foundation for Scholars of Anhui Normal University.


\end{document}